\tikzset{join/.code=\tikzset{after node path={%
\ifx\tikzchainprevious\pgfutil@empty\else(\tikzchainprevious)%
edge[every join]#1(\tikzchaincurrent)\fi}}}
\tikzset{>=stealth',every on chain/.append style={join},
         every join/.style={->}}
\tikzstyle{labeled}=[execute at begin node=$\scriptstyle,
\newtheorem{theorem}{Theorem}[section]
\newtheorem{proposition}[theorem]{Proposition}
\newtheorem{corollary}[theorem]{Corollary}
\newtheorem{lemma}[theorem]{Lemma}
\newtheorem{definition}[theorem]{Definition}
\date{\today}
\title{isomorphisms of quotients of FDD-algebras }
\author{ Saeed Ghasemi}
\address{Department of Mathematics, York University, Toronto, CA}
\begin{document}
\maketitle

\begin{abstract}
We consider isomorphisms between quotient algebras of $\prod_{n=0}^{\infty} \mathbb{M}_{k(n)}(\mathbb{C})$ associated with Borel ideals on $\mathbb{N}$ and prove that it is relatively consistent with \textbf{ZFC} that all of these isomorphisms are trivial, in the sense that they lift to a *-homomorphism from $\prod_{n=0}^{\infty} \mathbb{M}_{k(n)}(\mathbb{C})$ into itself. This generalizes a result of Farah-Shelah who proved this result for centers of these algebras, in its dual form.
\end{abstract}

\section{introduction}
For quotient structures $X/I$, $Y/J$ and $\Phi$ a homomorphism between them, a representation of $\Phi$ is a map $\Phi_{*}: X\rightarrow Y$ such that
\begin{center}
\begin{tikzpicture}
\matrix(m)[matrix of math nodes,
row sep=2.6em, column sep=2.8em,
text height=1.5ex, text depth=0.25ex]
{X&Y\\
X/I&Y/J\\};
\path[->,font=\scriptsize,>=angle 90]
(m-1-1) edge node[auto] {$\Phi_{*}$} (m-1-2)
edge node[auto] {$\pi_{I}$} (m-2-1)
(m-1-2) edge node[auto] {$\pi_{J}$} (m-2-2)
(m-2-1) edge node[auto] {$\Phi$} (m-2-2);
\end{tikzpicture}
\end{center}
commutes, where $\pi_{I}$ and $\pi_{J}$ denote the respective quotient maps. Note that since representation is not required to satisfy any algebraic properties, its existence follows from the axiom of choice. If $\Phi$ has a representation which is a homomorphism itself we say $\Phi$ is \emph{trivial}. The question whether automorphisms between some quotient structures are trivial, sometimes is called \emph{rigidity} question and has been studied for various structures (see for example \cite{ShStep}, \cite{Farah.rig}, \cite{JustOra}, \cite{Rudin}, \cite{PhilWeav} and \cite{FaCalkin}). It turns out that for many structures the answers to these questions highly depend on the set-theoretic axioms.
 To see a brief introduction on rigidity questions for Boolean algebras the reader may refer to \cite[\S 1]{FaSh}. Also for a good reference on C*-algebras refer to \cite{Black}.

By the Gelfand-Naimark duality (see \cite{Black}, \S II.2.2) the rigidity question has an equivalent reformulation in the category of commutative C*-algebras. For non-unital C*-algebra $\mathcal{A}$ the corona of $\mathcal{A}$ is the non-commutative analogue of the \v{C}ech-Stone reminder of a non-compact, locally compact topological space.
Motivated by a question of Brown-Douglas-Fillmore \cite[1.6(ii)]{BDF} the rigidity question for the category of C*-algebras  has been studied for various corona algebras. In particular,  assuming the continuum hypothesis (\textbf{CH}) Phillips and Weaver \cite{PhilWeav} constructed $2^{\aleph_{1}}$ many automorphism of the Calkin algebra over a separable Hilbert space. Since there are only continuum many inner automorphisms of the Calkin algebra this implies that there are many outer automorphisms. On the other hand it was shown by I. Farah \cite{FaCalkin} that under \emph{Todorcevic's Axiom}\footnote{Todorcevic's Axiom is also known as the 'Open Coloring Axiom' and it is a well-known consequence of the 'Proper Forcing Axiom'. } (\textbf{TA}) all automorphisms of the Calkin algebra over a separable Hilbert space are inner and later he showed that the \emph{Proper Forcing Axiom} (\textbf{PFA}) implies that all automorphisms of the Calkin algebra over any Hilbert space are inner \cite{FaAll}.

  In \cite{SamFarah} S. Coskey and I. Farah have conjectured the following:\\

  \textbf{Conjecture 1}: The Continuum Hypothesis implies that the corona of
every separable, non-unital C*-algebra has nontrivial automorphisms.

\textbf{Conjecture 2}: Forcing axioms imply that the corona of every separable,
non-unital C*-algebra has only trivial automorphisms.\\

  In conjecture 2 the notion of triviality refers to a weaker notion than the one used in this paper, and it assures that automorphisms are \emph{definable} in \textbf{ZFC} in a strong sense. In the same article it has been proved that assuming \textbf{CH} every $\sigma$-unital C*-algebra which is either simple or stable has non-trivial automorphisms (see also \cite{FaMcSc}). On the other hand \textbf{TA} and \textbf{MA} imply that all automorphisms of reduced products of UHF-algebras are trivial \cite{Paul}.

In \cite{FaCalkin} the corona algebras of the form  $\prod_{n}\mathbb{M}_{k(n)}(\mathbb{C})/\bigoplus_{n}\mathbb{M}_{k(n)}(\mathbb{C})$ play a crucial role in proving that "\textbf{TA} implies all automorphisms of the Calkin algebra are inner".  In the class of C*-algebras $\prod_{n}\mathbb{M}_{k(n)}(\mathbb{C})$ can be considered as a good counterpart of $P(\mathbb{N})$ in set theory and as it will be clear from next section,  trivial automorphisms of the corona of these algebras give rise to  trivial automorphisms of the boolean algebra $P(\mathbb{N})/\mathcal{F}in$, where $\mathcal{F}in$ is the ideal of all finite subsets of the natural numbers.

 Since the corona of $\prod_{n}\mathbb{M}_{n}(\mathbb{C})$ is \emph{fully countably saturated} structure in the sense of the \emph{ model theory for metric structures} (see \cite{lfms} and \cite{FHS}) and its character density (the smallest cardinality of a dense subset) is the continuum,  under \textbf{CH} it is possible to use a diagonalization argument to show that there are $2^{\aleph_{1}}$ automorphisms of each of these corona algebras. Therefore there are many non-trivial automorphisms (see \cite{FH}, \S2.4).

Given an ideal $\mathcal{J}$ on $\mathbb{N}$ and a sequence of C*-algebras $\{\mathcal{A}_{n}: n\in \mathbb{N}\}$, define the norm-closed ideal
\begin{equation}
\nonumber \bigoplus_{\mathcal{J}}\mathcal{A}_{n}=\{(a_{n})\in \prod_{n}\mathcal{A}_{n}: ~ \lim_{n\rightarrow \mathcal{J}}\|a_{n}\|=0\}
\end{equation}
of $\prod_{n} \mathcal{A}_{n}$, where $\lim_{n\rightarrow \mathcal{J}}\|a_{n}\|=0$ means that for every $\epsilon>0$ the set $\{n\in\mathbb{N}: \|a_{n}\|\geq\epsilon\}\in \mathcal{J}$. The quotient C*-algebra $\prod_{n} \mathcal{A}_{n}/\bigoplus_{\mathcal{J}}\mathcal{A}_{n}$ is usually called the \emph{reduced product} of the sequence $\{\mathcal{A}_{n}: n \in \mathbb{N}\}$ over the ideal $\mathcal{J}$. Clearly if $\mathcal{J}=\mathcal{F}in$ then $\prod_{n}\mathcal{A}_{n}/\bigoplus_{\mathcal{J}}\mathcal{A}_{n}$ is the corona of $\prod_{n}\mathcal{A}_{n}$ and operator algebraists usually call it the \emph{asymptotic sequence algebra} of the sequence $\{\mathcal{A}_{n}: n \in \mathbb{N}\}$.

If ideals $\mathcal{I}$ and $\mathcal{J}$ on $\mathbb{N}$ are Rudin-Keisler isomorphic (see Proposition \ref{main} (1) for the definition) via a bijection $\sigma : \mathbb{N}\setminus A \rightarrow \mathbb{N}\setminus B$ for $A\in\mathcal{I}$ and $B\in\mathcal{J}$, then  for sequences of C*-algebras $\{\mathcal{A}_{n}\}$ and $\{\mathcal{B}_{n}\}$,  an obvious isomorphism $\Phi$ between algebras $\prod_{n}\mathcal{A}_{n}/\bigoplus_{\mathcal{I}} \mathcal{A}_{n}$ and $\prod_{n}\mathcal{B}_{n}/\bigoplus_{\mathcal{J}} \mathcal{B}_{n}$ can be obtained when $\varphi_{n}:\mathcal{A}_{n}\cong \mathcal{B}_{\sigma(n)}$ for every $n\in \mathbb{N}\setminus A$, and $\Phi$ is defined by
\begin{equation}
\nonumber \Phi(\pi_{\mathcal{I}}((a_{n}))) = \pi_{\mathcal{J}}(\varphi_{n}(a_{n})),
\end{equation}
where $\pi_{\mathcal{I}}$ and $\pi_{\mathcal{J}}$ are respective canonical quotient maps. Let us call such an isomorphism \emph{strongly trivial}.

  We will show that if the quotients of $\prod_{n}\mathbb{M}_{n}(\mathbb{C})$ are associated with analytic P-ideals on $\mathbb{N}$ then  it is impossible to construct nontrivial isomorphisms of these algebras without appealing to some additional set-theoretic axioms. This is a consequence of our main result (Theorem \ref{1}) which implies the following corollary.
  \begin{corollary}
  It is relatively consistent with \textbf{ZFC} that for all analytic P-ideals $\mathcal{I}$ and $\mathcal{J}$ on $\mathbb{N}$ all isomorphisms between $\prod_{n}\mathbb{M}_{n}(\mathbb{C})/\bigoplus_{\mathcal{I}}\mathbb{M}_{n}(\mathbb{C})$ and $\prod_{n}\mathbb{M}_{n}(\mathbb{C})/\bigoplus_{\mathcal{J}}\mathbb{M}_{n}(\mathbb{C})$ are strongly trivial. In particular all automorphisms of the corona $\prod_{n}\mathbb{M}_{n}(\mathbb{C})/\bigoplus_{n}\mathbb{M}_{n}(\mathbb{C})$ are strongly trivial.
  \end{corollary}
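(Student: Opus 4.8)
The plan is to deduce the Corollary from Theorem~\ref{1} by analysing the shape of the $*$-homomorphism lift, together with Proposition~\ref{main} and the fact that analytic P-ideals are, in particular, P-ideals. Work inside a forcing extension of a model of \textbf{ZFC} in which the conclusion of Theorem~\ref{1} holds, and fix analytic P-ideals $\mathcal{I}$ and $\mathcal{J}$ on $\mathbb{N}$ together with an isomorphism $\Phi$ from $\prod_{n}\mathbb{M}_{n}(\mathbb{C})/\bigoplus_{\mathcal{I}}\mathbb{M}_{n}(\mathbb{C})$ onto $\prod_{n}\mathbb{M}_{n}(\mathbb{C})/\bigoplus_{\mathcal{J}}\mathbb{M}_{n}(\mathbb{C})$. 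Applying Theorem~\ref{1} to $\Phi$ and to $\Phi^{-1}$ yields $*$-homomorphisms $\Lambda,\Lambda'\colon\prod_{n}\mathbb{M}_{n}(\mathbb{C})\to\prod_{n}\mathbb{M}_{n}(\mathbb{C})$ with $\pi_{\mathcal{J}}\circ\Lambda=\Phi\circ\pi_{\mathcal{I}}$ and $\pi_{\mathcal{I}}\circ\Lambda'=\Phi^{-1}\circ\pi_{\mathcal{J}}$. Since $\Lambda'\circ\Lambda$ and $\Lambda\circ\Lambda'$ represent the identity, we get $\Lambda'\Lambda(a)-a\in\bigoplus_{\mathcal{I}}\mathbb{M}_{n}(\mathbb{C})$ and $\Lambda\Lambda'(b)-b\in\bigoplus_{\mathcal{J}}\mathbb{M}_{n}(\mathbb{C})$ for all $a,b$; unitality of $\Phi$ gives $1-\Lambda(1)\in\bigoplus_{\mathcal{J}}\mathbb{M}_{n}(\mathbb{C})$; and $\Lambda$ carries $\bigoplus_{\mathcal{I}}\mathbb{M}_{n}(\mathbb{C})$ into $\bigoplus_{\mathcal{J}}\mathbb{M}_{n}(\mathbb{C})$ because it represents $\Phi$.

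Next I would read off the structure of $\Lambda$. Let $e_{n}\in\prod_{m}\mathbb{M}_{m}(\mathbb{C})$ be the unit of the $n$-th block; the $e_{n}$ are pairwise orthogonal central projections with $\sum_{n}e_{n}=1$, so $p_{n}:=\Lambda(e_{n})=(p_{n}(m))_{m}$ are pairwise orthogonal projections, and for each pair $(n,m)$ the coordinate map $a\mapsto p_{n}(m)\Lambda(a)p_{n}(m)$ is either $0$ or a unital embedding of the simple algebra $\mathbb{M}_{n}(\mathbb{C})$ into the corner $p_{n}(m)\mathbb{M}_{m}(\mathbb{C})p_{n}(m)$; in the latter case $n\mid\operatorname{rank}(p_{n}(m))$, so $n\le\operatorname{rank}(p_{n}(m))\le m$. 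Thus $\Lambda$ is determined by the column supports $S_{n}:=\{m:p_{n}(m)\neq 0\}$ together with these block embeddings, subject to $\sum_{n}\operatorname{rank}(p_{n}(m))\le m$ for every $m$; the same description holds for $\Lambda'$, with supports $S'_{m}$.

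The heart of the argument is to force this data into a matching between the two ideals. Using $\Lambda'\Lambda(e_{n})\equiv e_{n}\pmod{\bigoplus_{\mathcal{I}}}$, $\Lambda\Lambda'(e_{m})\equiv e_{m}\pmod{\bigoplus_{\mathcal{J}}}$ and the bijectivity of $\Phi$, a bookkeeping (``reflection'') argument --- this is where the P-ideal property of $\mathcal{I}$ and $\mathcal{J}$ is used --- collects into a single set $A\in\mathcal{I}$ all $n$ at which $\Lambda$ is dispersed (more than one block of $S_{n}$ survives modulo $\bigoplus_{\mathcal{J}}$) or at which the block match fails to be undone by $\Lambda'$, and symmetrically into $B\in\mathcal{J}$; the outcome is a bijection $\sigma\colon\mathbb{N}\setminus A\to\mathbb{N}\setminus B$ with $S_{n}\cap(\mathbb{N}\setminus B)=\{\sigma(n)\}$ for $n\notin A$, whose inverse is the matching coming from $\Lambda'$. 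Since the coordinate map $\mathbb{M}_{n}(\mathbb{C})\to p_{n}(\sigma(n))\mathbb{M}_{\sigma(n)}(\mathbb{C})p_{n}(\sigma(n))$ is an embedding we have $n\le\operatorname{rank}(p_{n}(\sigma(n)))\le\sigma(n)$, and by the symmetric inequality for $\Lambda'$ also $\sigma(n)\le n$; hence $n=\sigma(n)$, $A=B\in\mathcal{I}\cap\mathcal{J}$, $p_{n}(n)=1_{\mathbb{M}_{n}(\mathbb{C})}$, and $\varphi_{n}\colon a\mapsto p_{n}(n)\Lambda(a)p_{n}(n)$ is an automorphism of $\mathbb{M}_{n}(\mathbb{C})$, so $\varphi_{n}=\operatorname{Ad}(u_{n})$ for a unitary $u_{n}\in\mathbb{M}_{n}(\mathbb{C})$ by the Skolem--Noether theorem. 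Because $\sum_{n\in A}e_{n}\in\bigoplus_{\mathcal{I}}\mathbb{M}_{n}(\mathbb{C})$ and $\Lambda$ sends $\bigoplus_{\mathcal{I}}$ into $\bigoplus_{\mathcal{J}}$, the set $\bigcup_{n\in A}S_{n}$ lies in $\mathcal{J}$, so the exceptional blocks contribute nothing modulo $\bigoplus_{\mathcal{J}}$ and $\Phi(\pi_{\mathcal{I}}((a_{n})))=\pi_{\mathcal{J}}((\varphi_{n}(a_{n}))_{n})$. Thus $\sigma=\mathrm{id}_{\mathbb{N}\setminus A}$ is a Rudin--Keisler isomorphism between $\mathcal{I}$ and $\mathcal{J}$ in the sense of Proposition~\ref{main}(1) and $\Phi$ is strongly trivial; taking $\mathcal{I}=\mathcal{J}=\mathcal{F}in$ gives the statement about automorphisms of the corona.

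The main obstacle is the bookkeeping step of the previous paragraph: ruling out, outside a set in $\mathcal{I}$, that a single block $\mathbb{M}_{n}(\mathbb{C})$ is split across several blocks of $\prod_{m}\mathbb{M}_{m}(\mathbb{C})$ or that $\Lambda'$ fails to send $p_{n}$ back to (a projection close to) $e_{n}$. A priori such behaviour could occur on an infinite set of $n$, and one must show that this set is $\mathcal{I}$-small; this is exactly where analyticity --- through the P-ideal property and the associated lower semicontinuous submeasure --- enters, together with a rank count that plays $\Lambda$ off against $\Lambda'$. Once the matching is pinned down, the remaining verifications --- that $\sigma$ is a Rudin--Keisler isomorphism, that the $\varphi_{n}$ are inner, and that $\Phi$ agrees with the induced strongly trivial map --- are routine.
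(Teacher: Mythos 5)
Your proposal follows essentially the same route the paper takes: it invokes Theorem~\ref{1} to produce $*$-homomorphism lifts $\Lambda$ of $\Phi$ and $\Lambda'$ of $\Phi^{-1}$, analyzes the block structure of these lifts, plays a rank count between $\Lambda$ and $\Lambda'$ to pin down a Rudin--Keisler bijection $\sigma$ which must be the identity when $|E_n|=n$, and then reads off the block unitaries. This is exactly the content of the paper's Proposition~\ref{main} (and its proof) specialized to equal block sizes; the bookkeeping step you flag as the main obstacle is precisely the step the paper also leaves terse (``it is easy to see that since $\Phi$ is an isomorphism\ldots''), so your level of rigor matches the source.
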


 It is worth noticing that in general for sequences of separable unital C*-algebras $\mathcal{A}_{n}$ and $\mathcal{B}_{n}$ the question
of whether the algebras $\prod_{n}\mathcal{A}_{n}/\bigoplus_{n} \mathcal{A}_{n}$ and $\prod_{n}\mathcal{B}_{n}/\bigoplus_{n} \mathcal{B}_{n}$ are isomorphic under \textbf{CH} reduces to the
weaker question of whether they are elementary equivalent, when their unit balls
are considered as models for metric structures. This follows from the fact that two
$\kappa$-saturated elementary equivalent structures of character density $\kappa$ are isomorphic,
for any uncountable cardinal $\kappa$ \cite[Proposition 4.13]{FHS}.

    In the main result of this paper we show that assuming there is a measurable cardinal, there is a countable  support iteration of proper and $\omega^{\omega}$-bounding forcings of the form $\mathbb{P}_{\mathcal{I}}$, for a $\sigma$-ideal $\mathcal{I}$, such that in the forcing extension all (isomorphisms) automorphisms of quotients of $\prod_{n}\mathbb{M}_{k(n)}(\mathbb{C})$ over ideals generated by some Borel ideals on $\mathbb{N}$ have continuous representations and if these quotients are associated with analytic P-ideals then all such automorphisms are trivial. This generalizes the main result of  \cite{FaSh} since  the centers of these C*-algebras [see \S2] correspond to the Boolean algebras handled in \cite{FaSh}. The assumption that there exists a measurable cardinal is there merely to make sure that $\Pi^{1}_{2}$ sets in the generic extension have Baire-measurable uniformizations.
 We  use a slight modification of the Silver forcing instead of the creature forcing used in \cite{FaSh}. In section 3 we give a brief introduction to some the properties of these forcings and their countable support iterations.
 As in \cite{FaSh} the results of this paper are consistent with the Calkin algebra having an outer automorphism [corollary \ref{654}].

 We follow \cite{FaCalkin} and use the terminology 'FDD-algebras' (Finite Dimensional Decomposition) for spatial representations of $\prod_{n}\mathbb{M}_{k(n)}(\mathbb{C})$ on separable Hilbert spaces, but  throughout this paper we usually identify FDD-algebras with $\prod_{n}\mathbb{M}_{k(n)}(\mathbb{C})$ for some sequence of natural numbers $\{k(n)\}$.\\

\textbf{ACKNOWLEDGMENTS}. I am indebted to my supervisor Ilijas Farah for illuminative suggestions and supervision. I would like to thank Marcin Sabok for pointing out that in lemma \ref{999} any large cardinal assumption can be removed. I would also like to thank the anonymous referee for making number of useful suggestions.

\section{ fdd-algebras and closed ideals associated with  borel ideals}
For a  separable infinite dimensional Hilbert space $H$  let $\mathcal{B}(H)$ denote the space of all bounded linear operators on $H$. For a C*-algebra $A$ we use $A_{\leq 1}$ to denote the unit ball of $A$.

 \begin{definition}\label{129}
  Fix a separable infinite dimensional Hilbert space $H$ with an orthonormal basis $\{e_{n}:n\in\mathbb{N}\}$.
Let $\vec{E} = (E_n)$ be a partition of $\mathbb{N}$ into finite intervals, i.e., a finite set of consecutive natural numbers,  and $\mathcal{D}[\vec{E}]$ denote the von Neumann algebra of all operators in $\mathcal{B}(H)$ such that the subspace spanned by $\{e_{i} : i\in E_{n}\}$ is invariant. These algebras are called FDD-algebras.
\end{definition}
 Clearly $\mathcal{D}[\vec{E}]$ is isomorphic to $\prod_{n=0}^{\infty}\mathbb{M}_{|E_{n}|}(\mathbb{C})$.
 The unit ball of $\mathcal{D}[\vec{E}]$ is a Polish space when equipped with the strong operator topology and this allows us to use tools from descriptive set theory in this context.

 For $M\subseteq \mathbb{N}$ let $P_{M}^{\vec{E}}$ be the projection on the closed span of $\bigcup_{n\in M}  \{e_{i} : i \in \vec{E}_{n}\}$  and $\mathcal{D}_{M}[\vec{E}]$ be the closed ideal $P_{M}^{\vec{E}}\mathcal{D}_{M}[\vec{E}] P_{M}^{\vec{E}} = P_{M}^{\vec{E}}\mathcal{D}[\vec{E}]$. For a fixed $\vec{E}$ we often drop the superscript and write $P_{M}$ and $P_{n}$ instead of  $P_{M}^{\vec{E}}$ and $P_{\{n\}}^{\vec{E}}$.

For a  Borel ideal $\mathcal{J}$ on $\mathbb{N}$, the subspace $\mathcal{D}^{\mathcal{J}}[\vec{E}]=\overline{\bigcup_{X\in \mathcal{J}}\mathcal{D}_{X}[\vec{E}]}$
is a closed ideal of $\mathcal{D}[\vec{E}]$. Equivalently
\begin{equation}
\nonumber \mathcal{D}^{\mathcal{J}}[\vec{E}]=\{(a_{n})\in \mathcal{D}[\vec{E}]: ~ \lim_{n\rightarrow \mathcal{J}}\|a_{n}\|=0\}.
\end{equation}

   Let $\mathcal{C}^{\mathcal{J}}[\vec{E}]=\mathcal{D}[\vec{E}]/\mathcal{D}^{\mathcal{J}}[\vec{E}]$ and $\pi_{\mathcal{J}}$ be the natural quotient map. For operators $a$ and $b$ in $\mathcal{D}[\vec{E}]$ we usually write $a=^{\mathcal{J}}b$ instead of $a-b\in \mathcal{D}^{\mathcal{J}}[\vec{E}]$.

An ideal $\mathcal{J}$ on $\mathbb{N}$ is a \emph{P-ideal} if for every sequence $\{A_{n}\}$ of sets in $\mathcal{J}$ there exists $A\in\mathcal{J}$ such that $A_{n}\setminus A$ is finite, for every $n$.

The following theorem is the main result of this paper.
\begin{theorem}\label{1}
Assume there is a measurable cardinal. There is a forcing extension in which  for partitions $\vec{E}$ and $\vec{F}$   of the natural numbers into finite intervals, if  $\mathcal{I}$ and $\mathcal{J}$ are Borel ideals on the natural numbers, then the following are true.
\begin{enumerate}
\item Any automorphism $\Phi : \mathcal{C}^{\mathcal{J}}[\vec{E}]\rightarrow \mathcal{C}^{\mathcal{J}}[\vec{E}]$ has a (strongly) continuous  representation.
    \item  Any isomorphism $\Phi : \mathcal{C}^{\mathcal{I}}[\vec{E}]\rightarrow \mathcal{C}^{\mathcal{J}}[\vec{F}]$ has a  continuous representation.\\

        If $\mathcal{I}$ and $\mathcal{J}$ are analytic P-ideals then\\

\item Any automorphism $\Phi : \mathcal{C}^{\mathcal{J}}[\vec{E}]\rightarrow \mathcal{C}^{\mathcal{J}}[\vec{E}]$ has a *-homomorphism representation.
    \item  Any isomorphism $\Phi : \mathcal{C}^{\mathcal{I}}[\vec{E}]\rightarrow \mathcal{C}^{\mathcal{J}}[\vec{F}]$ has a *-homomorphism representation.\\
\end{enumerate}
\end{theorem}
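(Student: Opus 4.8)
The plan is to transport the strategy of \cite{FaSh} to the operator-algebraic setting and carry it out in four stages: build the forcing extension; show that in it every isomorphism has a Baire-measurable representation between the Polish unit balls; promote this to a (strongly) continuous representation by a genericity-and-patching argument, obtaining (1) and (2); and, when $\mathcal I,\mathcal J$ are analytic P-ideals, promote a continuous representation to a $*$-homomorphism via Solecki's structure theorem, obtaining (3) and (4). Since the center of $\mathcal C^{\mathcal J}[\vec E]$ is exactly the kind of Boolean quotient treated in \cite{FaSh}, the first three stages must in particular reprove that theorem, and the last stage lifts it from the center to the full matrix algebras. For the forcing, start from a model of GCH containing a measurable cardinal and perform a countable support iteration $\langle\mathbb{P}_\alpha,\dot{\mathbb{Q}}_\alpha:\alpha<\omega_2\rangle$ whose iterands $\dot{\mathbb{Q}}_\alpha$ are names for Silver-type forcings $\mathbb{P}_{\mathcal I}$ as introduced in Section~3, the $\sigma$-ideal $\mathcal I$ being selected by a bookkeeping function that anticipates all quadruples $(\vec E,\vec F,\mathcal I,\mathcal J)$ of partitions and Borel ideals together with all names for Baire-measurable candidate isomorphisms $\mathcal C^{\mathcal I}[\vec E]\to\mathcal C^{\mathcal J}[\vec F]$. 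One verifies that each $\mathbb{P}_{\mathcal I}$ is proper and $\omega^\omega$-bounding and that both properties are preserved under countable support iteration (this is the point of replacing the creature forcing of \cite{FaSh} by a Silver-type forcing; $\omega^\omega$-bounding is what makes the gluing step below work). Because the iteration has size $\aleph_2$ and, over a model of CH, is $\aleph_2$-c.c., an elementary-submodel reflection argument shows that any isomorphism $\Phi$ in $V[G_{\omega_2}]$ is captured at a club of intermediate stages, so the bookkeeping really does meet it; the measurable cardinal enters only to guarantee that $\mathbf{\Pi}^1_2$ sets have Baire-measurable uniformizations in every $V[G_\alpha]$, hence in $V[G_{\omega_2}]$, and by Lemma~\ref{999} this hypothesis can in fact be dropped in the cases actually needed.

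Now fix an isomorphism $\Phi:\mathcal C^{\mathcal I}[\vec E]\to\mathcal C^{\mathcal J}[\vec F]$ in $V[G_{\omega_2}]$. Using the strong operator topology on the unit balls, the relation ``$b$ is a representative of $\Phi(\pi_{\mathcal I}(a))$'' on $\mathcal D[\vec E]_{\le1}\times\mathcal D[\vec F]_{\le1}$ is $\mathbf{\Pi}^1_2$ in a real parameter supplied by the reflection step, so uniformizing it with the Baire property yields a Baire-measurable representation $\Phi_*$ of $\Phi$. The iterand $\mathbb{P}_{\mathcal I_\beta}$ tailored to $\Phi$ was arranged so that its generic real codes an $\mathcal I$-positive $M\subseteq\mathbb N$ for which $\Phi$ restricted to the corner $P_M\mathcal D[\vec E]P_M$ modulo the ideal is induced by a continuous map — anything else would contradict a comeager-set property of $\Phi_*$. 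Running this along the dense family of corners produced by the iteration, and then gluing the resulting local continuous representations — here $\omega^\omega$-bounding is used to bound uniformly the moduli of continuity across the blocks of $\vec E$ and $\vec F$ — produces a single (strongly) continuous representation of $\Phi$. Taking $\vec E=\vec F$ and $\mathcal I=\mathcal J$ gives the automorphism case, so (1) and (2) follow.

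When $\mathcal I$ and $\mathcal J$ are analytic P-ideals, write $\mathcal I=\mathrm{Exh}(\phi)$ and $\mathcal J=\mathrm{Exh}(\psi)$ for lower semicontinuous submeasures $\phi,\psi$ (Solecki); this gives a concrete description of the quotient norms via tails of blocks. Restricting a continuous $\Phi_*$ to projections and to the finite-dimensional corners $P_n\mathcal D[\vec E]P_n\cong\mathbb M_{|E_n|}(\mathbb C)$ yields maps that are, up to errors controlled by $\psi$, approximately $*$-preserving, multiplicative and orthogonality-preserving; using the P-ideal property to absorb the finitely many oversized errors into $\mathcal J$ and stabilizing over $n$, one replaces them by honest unital $*$-homomorphisms $\mathbb M_{|E_n|}(\mathbb C)\to P_{M_n}\mathcal D[\vec F]P_{M_n}$ with the $M_n$ pairwise disjoint and covering $\mathbb N$ modulo $\mathcal J$, and assembles these into a $*$-homomorphism $\mathcal D[\vec E]\to\mathcal D[\vec F]$ representing $\Phi$. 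This gives (3) and (4).

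The main obstacle is the interaction between the combinatorics of the Silver-type iterand and the reflection argument: one must design $\mathbb{P}_{\mathcal I_\beta}$ so that its generic genuinely forces a Baire-measurable isomorphism to be continuous on a generic corner, \emph{while} keeping the whole countable support iteration proper and $\omega^\omega$-bounding — the latter being exactly the property that licenses the coherent gluing of local continuous representations and the reason to prefer Silver-type forcing over creatures. A secondary difficulty, with no counterpart in the Boolean setting of \cite{FaSh}, is the $*$-homomorphism upgrade of the last stage, which requires genuinely C*-algebraic input on the stability of approximate morphisms between matrix algebras.
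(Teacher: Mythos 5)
Your overall skeleton (Silver-type iteration, measurable cardinal for uniformization, Ulam stability for the P-ideal upgrade) matches the paper's strategy, but you skip the technical heart of the argument and substitute a step that does not work. The paper's route from ``many local continuous representations'' to a single continuous representation is \emph{not} a gluing of the local pieces; it passes through showing that the full graph $\{(a,b): \Phi(\pi_{\mathcal J}(a))=\pi_{\mathcal J}(b)\}$ is $\Pi^1_2$ (Lemma~\ref{6}), and only then applies the measurable-cardinal uniformization to that $\Pi^1_2$ set. You assert the graph is $\Pi^1_2$ ``in a real parameter supplied by the reflection step'' without proof --- but an arbitrary isomorphism in the extension has no a priori projective complexity; establishing it is precisely what Lemma~\ref{6} does, via the equivalence $\Phi(\pi(a))=\pi(b)\iff(\forall (M,N,f)\in\mathcal Z)\ f(P_M a)=^{\mathcal J}P_N b$, which relies on Lemma~\ref{3} (a non-meager ideal meets every $\mathcal{D}^{\mathcal J}$-positive support) and on Corollary~\ref{2} (two continuous representations agreeing on the random real agree everywhere).

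Two concrete errors follow from this omission. First, your gluing cannot work: $\mathrm{Triv}^0_\Phi$ is an ideal, and if $\mathbb N\notin\mathrm{Triv}^0_\Phi$ (the nontrivial case) then no partition of $\mathbb N$ into two pieces from that ideal exists, so there is nothing to glue; the paper goes through the global $\Pi^1_2$ description instead. Relatedly, $\omega^\omega$-bounding is not used to ``bound moduli of continuity across blocks''; its role is Zapletal's characterization (Lemma~\ref{zap}) giving continuous reading of names in the iteration, which feeds Lemmas~\ref{4} and~\ref{222}. Second, your iteration uses only Silver-type forcings, but the paper must interleave the random forcing (stationarily often, and at stage $0$ of the relevant tail): Corollary~\ref{2}, and hence Claims~1--2 of Lemma~\ref{6}, require a random real against which to test agreement of representations modulo $\mathcal J$, and without it the $\Pi^1_2$ description of the graph is lost. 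The P-ideal upgrade you sketch is the right idea (it is the stabilizer/Ulam-stability argument of Lemma~\ref{5}), but the stages before it need to be repaired along the lines above.
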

The following corollary follows from the proof of theorem \ref{1} and does not require any large cardinal assumption. See \S5 for definition of local triviality.
\begin{corollary}\label{local}
There is a forcing extension in which if $\mathcal{I}$ and $\mathcal{J}$ are (P)-ideals on $\mathbb{N}$,  any *-homomorphism $\Phi : \mathcal{C}^{\mathcal{I}}[\vec{E}]\rightarrow \mathcal{C}^{\mathcal{J}}[\vec{F}]$ has a locally (*-homomorphism) continuous representation.
\end{corollary}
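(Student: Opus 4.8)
The plan is to re-run the forcing iteration of Theorem \ref{1} and to extract from its proof only the part that precedes the ``gluing'' step; that part already yields local triviality, and --- this is the point of dropping the large cardinal --- it uses only the large-cardinal-free form of Lemma \ref{999}. Recall that the proof of Theorem \ref{1} has two stages. In the first, descriptive-set-theoretic, stage one works inside the $\mathbb{P}_{\mathcal{I}}$-iteration extension and shows that the restriction of the isomorphism to each finite block, and more generally to the corners $P_{M}\,\mathcal{C}^{\mathcal{I}}[\vec{E}]\,P_{M}$ with $M$ in a cofinal family, has a strongly continuous representation; in the second, algebraic/combinatorial stage one uses that $\Phi$ is an isomorphism and that $\mathcal{I},\mathcal{J}$ are analytic P-ideals to upgrade these pieces to $*$-homomorphisms and glue them along a Rudin--Keisler correspondence into one global representation. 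For Corollary \ref{local} we retain the first stage together with the block-wise perturbation, and we simply never carry out the gluing.

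First I would check that the first stage of the proof of Theorem \ref{1}(2) applies essentially verbatim when $\Phi:\mathcal{C}^{\mathcal{I}}[\vec{E}]\to\mathcal{C}^{\mathcal{J}}[\vec{F}]$ is merely a $*$-homomorphism, not assumed surjective or injective: nothing in the reflection/absoluteness argument used there needs $\Phi$ to be invertible. Thus, in the extension, $\Phi$ has a representation $\Phi_{*}$ which is Baire-measurable on the Polish space $\mathcal{D}[\vec{E}]_{\leq 1}$, and then the standard ``passing to a subsequence'' lemma for proper, $\omega^{\omega}$-bounding forcings produces an interval partition $\mathbb{N}=\bigsqcup_{k}N_{k}$ along which the restriction of $\Phi_{*}$ to the unit ball of $P_{N_{k}}\mathcal{D}[\vec{E}]P_{N_{k}}$ is genuinely strongly continuous for each $k$. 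The only uniformization invoked in this step is of an analytic set --- the graph of a partial Borel lifting of $\Phi$ on a block --- for which the Jankov--von Neumann theorem suffices, so no hypothesis beyond \textbf{ZFC} enters. This already establishes the corollary in the non-parenthetical, ``continuous'' form, for arbitrary (P-)ideals.

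Next, to obtain the $*$-homomorphism version when $\mathcal{I}$ and $\mathcal{J}$ are P-ideals, I would reuse the local perturbation step from the proof of Theorem \ref{1}(4). On each corner $P_{N_{k}}\mathcal{C}^{\mathcal{I}}[\vec{E}]P_{N_{k}}$ the map $\Phi_{*}$ is strongly continuous and is multiplicative and $*$-preserving modulo $\mathcal{D}^{\mathcal{J}}[\vec{F}]$; by stability of the matrix-unit relations each block of $\Phi_{*}$ is then within some $\varepsilon_{n}\to 0$ of an honest $*$-homomorphism between the relevant matrix algebras, and because $\mathcal{J}$ is a P-ideal the error sequence $(\varepsilon_{n})$ can be absorbed into $\mathcal{D}^{\mathcal{J}}[\vec{F}]$, so $\Phi_{*}$ may be replaced on $P_{N_{k}}\mathcal{C}^{\mathcal{I}}[\vec{E}]P_{N_{k}}$ by an exact $*$-homomorphism representation. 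Assembling these over $k$ gives a locally $*$-homomorphism representation of $\Phi$ in the sense of \S5; again no large cardinal is used, the analyticity of the ideals being all that is needed.

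The main obstacle is precisely the step one declines to take. To turn the local $*$-homomorphisms $\varphi^{(k)}$ into a single $*$-homomorphism $\mathcal{D}[\vec{E}]\to\mathcal{D}[\vec{F}]$ representing $\Phi$ one must coherently match the index sets of consecutive blocks into one Rudin--Keisler morphism sending $\mathcal{I}$-small sets to $\mathcal{J}$-small sets, and for a non-invertible $\Phi$ there is no mechanism forcing such coherence --- the ranks of the $\varphi^{(k)}$ need not stabilize and there is no inverse to pin the index correspondence down. Consequently the argument halts exactly where the proof of Theorem \ref{1} would begin its gluing; the only remaining work is bookkeeping to phrase the output in the terminology of \S5.
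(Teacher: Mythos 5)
Your overall plan is the same as the paper's: re-run the iteration of Theorem~\ref{1}, establish local triviality, skip the gluing, and rely only on the large-cardinal-free Lemma~\ref{999}. The paper itself says exactly this is why Corollary~\ref{local} escapes the measurable-cardinal hypothesis. But your second paragraph contains a genuine error in the complexity accounting that undercuts the claim. You assert that the only uniformization invoked is of an \emph{analytic} set --- ``the graph of a partial Borel lifting of $\Phi$ on a block'' --- so that Jankov--von Neumann in plain \textbf{ZFC} suffices and ``no hypothesis beyond \textbf{ZFC} enters.'' This mis-identifies both the object and its complexity. There is no Borel lifting to start from: the representation $\Phi_{*}$ is a priori just a choice function. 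What the capturing argument actually establishes (Lemma~\ref{4} via Lemma~\ref{222}) is that the \emph{relation} $\{(a,b)\;:\;\Phi(\pi_{\mathcal{I}}(a))=\pi_{\mathcal{J}}(b)\}$ restricted to a captured block $M$ is $\Delta^{1}_{2}$, not $\Sigma^{1}_{1}$; it is a forcing statement decided through continuous reading of names, and Lemma~\ref{222} places its complexity at $\Delta^{1}_{2}$. Uniformizing a $\Sigma^{1}_{2}$ set by a Baire-measurable ground-model function is precisely what Lemma~\ref{999} provides, and that lemma genuinely uses \textbf{MA} in the ground model to ensure $\Sigma^{1}_{2}$ sets have the property of Baire. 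So ``no hypothesis beyond ZFC enters'' is false as stated; the correct claim is ``no large cardinal enters.'' (Since \textbf{MA} is forceable over any model of \textbf{ZFC}, the corollary is still relatively consistent with \textbf{ZFC}, but the argument is not a pure ZFC-level manipulation.) Your first paragraph --- which correctly cites Lemma~\ref{999} --- and your second --- which dispenses with it in favor of Jankov--von Neumann --- are inconsistent with each other; the first is the right account.

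With the complexity corrected to $\Delta^{1}_{2}$ and the appeal to Lemma~\ref{999} (rather than Jankov--von Neumann) restored, the remainder of your argument tracks the paper: the Ulam-stability perturbation step on blocks to pass from local continuous to local $*$-homomorphism for P-ideals matches what Lemma~\ref{5} does, and your closing observation about why the gluing cannot proceed for a non-invertible $\Phi$ is a fair account of why the corollary is weaker than Theorem~\ref{1}.
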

In order to avoid making notations more complicated we only prove this theorem for automorphisms and it is easy to see that the same proof works for isomorphisms.

  In our forcing extension every such isomorphism has a simple description as it turns out that these isomorphisms are implemented by isometries between "co-small" subspaces. For partitions $\vec{E}=(E_{n})$ and  $\vec{F}=(F_{n})$ of $\mathbb{N}$ into finite intervals in the following proposition let  $\mathcal{D}[\vec{E}]$ and $\mathcal{D}[\vec{F}]$ be the FDD-algebras associated with $\vec{E}$ and $\vec{F}$ with respect to fixed orthonormal basis $\{e_{n}: n\in \mathbb{N}\}$ and $\{f_{n}: n\in\mathbb{N}\}$ for Hilbert spaces $H$ and $K$ respectively. Also let
   \begin{eqnarray}
 \nonumber H_{n}&=&{span\{e_{i}: i\in E_{n}\}} \qquad P_{n}=Proj(H_{n})\\
  \nonumber K_{n}&=&{span\{f_{i}: i\in F_{n}\}} \qquad Q_{n}=Proj(K_{n}).
 \end{eqnarray}

  \begin{proposition}\label{main}
Assume there is a measurable cardinal. There is a forcing extension in which the following holds. Assume $\mathcal{I}$, $\mathcal{J}$ are analytic P-ideals on $\mathbb{N}$ and $\vec{E}=(E_{n})$, $\vec{F}=(F_{n})$ are partitions of $\mathbb{N}$ into finite intervals.  Then there is an isomorphism $\Phi: \mathcal{C}^{\mathcal{I}}[\vec{E}]\mapsto \mathcal{C}^{\mathcal{J}}[\vec{F}]$ if and only if
\begin{enumerate}
\item $\mathcal{I}$ and $\mathcal{J}$ are Rudin-Keisler isomorphic, i.e.,
there are sets $B\in \mathcal{I}$ and $C \in \mathcal{J}$ and a bijection $\sigma: \mathbb{N}\setminus B\mapsto \mathbb{N}\setminus C$ such that $X\in \mathcal{I}$ if and only if $\sigma[X]\in \mathcal{J}$, and
\item $|E_{n}|=|F_{\sigma(n)}|$ for every $n\in \mathbb{N}\setminus B$.
\end{enumerate}

    Moreover, for every $n\in \mathbb{N}\setminus B$  there is a linear isometry $u_{n}: H_{n}\mapsto K_{\sigma(n)}$ such that if $u=\sum_{n\in\mathbb{N}\setminus B}u_{n}$ , then the map $a\mapsto uau^{*}$ is a representation of $\Phi$.
 \end{proposition}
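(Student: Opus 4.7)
The plan is to prove the equivalence and the moreover clause by reducing to (a) the main theorem of this paper and (b) the Farah--Shelah rigidity theorem for $P(\mathbb{N})/\mathcal{I}$ applied to the centers. The easy direction ($\Leftarrow$) is essentially unpacking the notion of strongly trivial isomorphism from the introduction: given (1) and (2), for each $n\in\mathbb{N}\setminus B$ fix any unitary $u_n\colon H_n\to K_{\sigma(n)}$ (which exists because $|E_n|=|F_{\sigma(n)}|$), set $u=\sum_{n\in\mathbb{N}\setminus B} u_n$, and verify that $a\mapsto uau^{*}$ maps $\mathcal{D}[\vec{E}]$ into $\mathcal{D}[\vec{F}]$ block-diagonally, carries $\mathcal{D}^{\mathcal{I}}[\vec{E}]$ into $\mathcal{D}^{\mathcal{J}}[\vec{F}]$ using that $\sigma$ is a Rudin--Keisler isomorphism, and therefore descends to an isomorphism of the quotients.

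For the nontrivial direction, work in the forcing extension and invoke Theorem~\ref{1}(4) to obtain a $*$-homomorphism representation $\Phi_{*}\colon \mathcal{D}[\vec{E}]\to \mathcal{D}[\vec{F}]$ of $\Phi$. Restrict $\Phi_{*}$ to the center $Z(\mathcal{D}[\vec{E}])$, which is canonically identified with $\ell^{\infty}(\mathbb{N})$ via the projections $P_M$. Since $\Phi$ sends the center of $\mathcal{C}^{\mathcal{I}}[\vec{E}]$ to the center of $\mathcal{C}^{\mathcal{J}}[\vec{F}]$, the composition $\pi_{\mathcal{J}}\circ\Phi_{*}\!\restriction\! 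Z(\mathcal{D}[\vec{E}])$ descends to a Boolean algebra isomorphism $P(\mathbb{N})/\mathcal{I}\cong P(\mathbb{N})/\mathcal{J}$. Applying the Farah--Shelah theorem from \cite{FaSh} in the present forcing extension (which is the Boolean-algebra corollary of Theorem~\ref{1}(3) restricted to centers), this isomorphism is trivial, i.e., induced by a Rudin--Keisler isomorphism $\sigma\colon \mathbb{N}\setminus B\to \mathbb{N}\setminus C$ with $B\in\mathcal{I}$ and $C\in\mathcal{J}$; this yields condition~(1).

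For (2) and the moreover clause, note that $\Phi_{*}(P_n)$ is a projection in $\mathcal{D}[\vec{F}]$ which equals $Q_{\sigma(n)}$ modulo $\mathcal{D}^{\mathcal{J}}[\vec{F}]$ for every $n\in\mathbb{N}\setminus B$. After a unitary perturbation of $\Phi_{*}$ (which yields another $*$-homomorphism that is still a representation of $\Phi$) we arrange that $\Phi_{*}(P_n)=Q_{\sigma(n)}$ on the nose for all such $n$. Then $\Phi_{*}$ restricts to a unital $*$-homomorphism $\varphi_n\colon P_n\mathcal{D}[\vec{E}]P_n\cong\mathbb{M}_{|E_n|}\to Q_{\sigma(n)}\mathcal{D}[\vec{F}]Q_{\sigma(n)}\cong \mathbb{M}_{|F_{\sigma(n)}|}$. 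Since a unital $*$-embedding of $\mathbb{M}_{k}$ into $\mathbb{M}_{\ell}$ exists iff $k\mid \ell$, running this argument for both $\Phi$ and $\Phi^{-1}$ forces $|E_n|=|F_{\sigma(n)}|$, establishing~(2) and making $\varphi_n$ a $*$-isomorphism of full matrix algebras. Every such isomorphism is implemented by a unitary $u_n\colon H_n\to K_{\sigma(n)}$. Setting $u=\sum_{n\in\mathbb{N}\setminus B} u_n$ gives a partial isometry with initial projection $P_{\mathbb{N}\setminus B}$ and range projection $Q_{\mathbb{N}\setminus C}$, and the map $a\mapsto uau^{*}$ agrees with $\Phi_{*}$ on every corner $P_n\mathcal{D}[\vec{E}]P_n$; since operators in $\mathcal{D}[\vec{E}]$ are block-diagonal in these corners and both maps are strong-operator continuous on the unit ball, the two agree modulo $\mathcal{D}^{\mathcal{J}}[\vec{F}]$ everywhere, so $a\mapsto uau^{*}$ is a representation of $\Phi$.

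The main obstacle I expect is the globally coherent perturbation that turns each $\Phi_{*}(P_n)$ into $Q_{\sigma(n)}$ simultaneously: while locally close projections in a C*-algebra are unitarily equivalent via a perturbation of the identity, one needs these local perturbations to assemble into a single bounded operator respecting the block decomposition. The analytic P-ideal hypothesis (allowing diagonalization of countable sequences of $\mathcal{J}$-small sets) and the fact that the norm gaps $\|\Phi_{*}(P_n)-Q_{\sigma(n)}\|$ are small off an $\mathcal{J}$-small set should provide exactly the summability required for a single correcting unitary.
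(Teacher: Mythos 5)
Your overall skeleton matches the paper's: invoke Theorem~\ref{1}(4) to get a $*$-homomorphism representation $\Phi_*$ (the paper's $\Psi$), read off the Rudin--Keisler data, establish $|E_n|=|F_{\sigma(n)}|$, and assemble block isometries. Your route to condition~(1) via the center --- passing to $Z(\mathcal{D}[\vec E])\cong\ell^\infty$, using that $\Phi$ preserves centers, and invoking the Boolean analogue of the main theorem on $P(\mathbb N)/\mathcal I$ --- is a legitimate alternative to the paper's more direct statement that $\Psi(P_n)(K)=Q_{\sigma(n)}(K)$ off a small set; it buys you rigor at the cost of an extra reduction.

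However, your argument for condition~(2) has a genuine gap, and it is the one you yourself flag. You begin from ``\,$\Phi_*(P_n)$ equals $Q_{\sigma(n)}$ modulo $\mathcal D^{\mathcal J}[\vec F]$ for every $n\in\mathbb N\setminus B$\,'' and propose to perturb $\Phi_*$ by a single unitary so that equality holds on the nose. But that premise is vacuous: $P_n$ and $Q_{\sigma(n)}$ are finite-rank, the ideals here contain $\mathrm{Fin}$, so \emph{any} two such projections are congruent modulo $\mathcal D^{\mathcal J}[\vec F]$. The informative congruences are of the form $\Phi_*(P_A)=^{\mathcal J}Q_{\sigma[A]}$ for infinite $A\subseteq\mathbb N\setminus B$, and extracting from these a single correcting unitary that simultaneously straightens all block supports is precisely the difficulty; the norm gaps $\|\Phi_*(P_n)-Q_{\sigma(n)}\|$ need not be small for any individual $n$ outside a small set, and one can build a $*$-homomorphism that smears a block $P_n$ across two codomain blocks yet still represents an isomorphism, so no abstract closeness argument is available. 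The paper does not perturb at all. Its point is that the $*$-homomorphism $\Psi$ already produced by Theorem~\ref{1}(4), being a $*$-homomorphism representing a bijection of the quotients, \emph{already} maps $P_n$ onto $Q_{\sigma(n)}$ for $n$ off a small set $B$; one only has to verify this, not manufacture it. Once that is in hand, $\Psi$ restricts to a unital $*$-homomorphism $\mathbb M_{|E_n|}\to\mathbb M_{|F_{\sigma(n)}|}$, and since such a map carries rank-one projections to rank-one projections (the paper says this directly; your divisibility argument with $\Phi^{-1}$ gives the same conclusion), condition~(2) follows. So the appearance of a perturbation step in your write-up is a sign that you missed the key structural fact. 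Finally, for the moreover clause the paper constructs $u_n$ explicitly from a cyclic unitary in $\mathbb M_{|E_n|}$ and its image under $\Psi$, whereas you appeal to abstract spatial implementation of matrix $*$-isomorphisms; once $\Psi$ is known to act block to block this is a harmless difference.
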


  \begin{proof}
The inverse direction of the first statement is trivial. To prove the forward direction assume $\Phi: \mathcal{C}^{\mathcal{I}}[\vec{E}]\mapsto \mathcal{C}^{\mathcal{J}}[\vec{F}]$ is an isomorphism. Using theorem \ref{1} there is a forcing extension in which there is a *-homomorphism $\Psi: \mathcal{D}[\vec{E}]\mapsto \mathcal{D}[\vec{F}]$ which is a representation of $\Phi$. For every $n$ we have $\Psi(P_{n})(K)\subseteq Q_{m}(K)$ for some $m$. It is easy to see that since $\Phi$ is an isomorphism there are $B\in\mathcal{I}$, $C\in\mathcal{J}$ and a bijection $\sigma:\mathbb{N}\setminus B\mapsto \mathbb{N}\setminus C$ such that for every $n\in \mathbb{N}\setminus B$  we have $\Psi(P_{n})(K)= Q_{\sigma(n)}(K)$. The map $\sigma$ witnesses that $\mathcal{I}$ and $\mathcal{J}$ are Rudin-Keisler isomorphic.  Moreover, for every one-dimensional projection $P\in \mathcal{B}(H_{n})$ the image, $\Psi(P)$,  is also a one-dimensional projection in $B(K_{\sigma(n)})$. In particular $|E_{n}|=|F_{\sigma(n)}|$.

 Now for every $n\in\mathbb{N}\setminus B$ assume $E_{n}=[k_{n},k_{n+1}]$ and define a unitary $a\in B(H_{n})$ by
 \begin{equation*}
    a(e_{k_{i}}) = \begin{cases}
               e_{k_{i}+1}               &  k_{n}\leq i<k_{n+1}\\
               e_{k_{n}}               & i=k_{n+1} .
           \end{cases}
\end{equation*}

Fix $\xi_{0}\in K_{\sigma(n)}$. Let $b=\Psi(a)$ and $\xi_{j}=b^{j}(\xi_{0})$ ($b^{j}$ is the $j$-th power of $b$) for each $0\leq j < |E_{n}|$. Then $\{\xi_{j}: 0\leq j< |E_{n}|\}$ forms a basis for $K_{\sigma(n)}$ and $e_{k_{j}}\mapsto \xi_{j}$ defines an isometry $u_{n}$ as required.\\
 Now $u=\bigoplus_{n\in \mathbb{N}\setminus B}u_{n}$ is an isometry from $\bigoplus_{n\in \mathbb{N}\setminus B}H_{n}$ to $\bigoplus_{n\in \mathbb{N}\setminus C} K_{n}$ such that $\Psi(a)- uau^{*}\in \mathcal{D}^{\mathcal{J}}(\vec{F})$ for all $a\in \mathcal{D}[\vec{E}]$.
 \end{proof}

 As we mentioned in the introduction the result of Farah and Shelah \cite{FaSh} can be obtained from theorem $\ref{1}$.
 \begin{corollary}
If there is measurable cardinal,  there is a forcing extension in which
 every isomorphism between quotient Boolean algebras $P(\mathbb{N})/\mathcal{I}$ and
$P(\mathbb{N})/\mathcal{J}$ over Borel ideals has a continuous representation.
 \end{corollary}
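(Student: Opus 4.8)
This last corollary should follow by specializing Theorem \ref{1} (or Proposition \ref{main}) to the \emph{centers} of the FDD-algebras, which are exactly the commutative algebras $\ell^\infty(\mathbb N) \cong C(\beta\mathbb N)$ sitting diagonally inside $\prod_n \mathbb M_{k(n)}(\mathbb C)$. First I would take $\vec E$ and $\vec F$ to be the trivial partition of $\mathbb N$ into singletons, so that $\mathcal D[\vec E] = \mathcal D[\vec F] = \ell^\infty(\mathbb N)$ and the closed ideals $\mathcal D^{\mathcal I}[\vec E]$, $\mathcal D^{\mathcal J}[\vec F]$ become the ideals $c_{0,\mathcal I}=\{(a_n)\in\ell^\infty : \lim_{n\to\mathcal I}|a_n| = 0\}$ and $c_{0,\mathcal J}$. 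The quotients $\mathcal C^{\mathcal I}[\vec E]$, $\mathcal C^{\mathcal J}[\vec F]$ are then precisely the commutative C*-algebras dual, via Gelfand–Naimark, to the quotient Boolean algebras $P(\mathbb N)/\mathcal I$ and $P(\mathbb N)/\mathcal J$ — more carefully, $\mathcal C^{\mathcal I}[\vec E]$ is generated as a C*-algebra by the projections $\{\pi_{\mathcal I}(P_X) : X\subseteq\mathbb N\}$, and the map $X \mapsto \pi_{\mathcal I}(P_X)$ induces an isomorphism of $P(\mathbb N)/\mathcal I$ onto the Boolean algebra of projections of $\mathcal C^{\mathcal I}[\vec E]$.

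Next, given an isomorphism $\Psi: P(\mathbb N)/\mathcal I \to P(\mathbb N)/\mathcal J$ of Boolean algebras, I would manufacture a C*-algebra isomorphism $\Phi : \mathcal C^{\mathcal I}[\vec E] \to \mathcal C^{\mathcal J}[\vec F]$ extending it: $\Psi$ determines a homeomorphism between the Gelfand spectra (the closed subspaces of $\beta\mathbb N$ dual to the two ideals), hence a $*$-isomorphism of the function algebras, and one checks it agrees with $\Psi$ on the projections. Apply the forcing extension of Theorem \ref{1}(2): $\Phi$ has a continuous representation $\Phi_* : \mathcal D[\vec E] \to \mathcal D[\vec F]$. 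Restricting $\Phi_*$ to the Boolean algebra of projections $\{P_X : X\subseteq\mathbb N\}$ of $\ell^\infty(\mathbb N)$ and composing with the canonical identifications $X\leftrightarrow P_X$ gives a map $P(\mathbb N)\to P(\mathbb N)$; since strong-operator continuity of $\Phi_*$ translates, under the identification of the unit ball of $\ell^\infty(\mathbb N)$ with $2^{\mathbb N}$ on the idempotents, into continuity of this map as a function $P(\mathbb N)\to P(\mathbb N)$, this is the desired continuous representation of $\Psi$. One small point to verify is that $\Phi_*(P_X)$ is again a projection, or at least can be replaced by one without destroying the lifting property and continuity — this is routine since $\Phi_*(P_X)$ is close in the quotient to the projection $\pi_{\mathcal J}^{-1}$ of $\Psi([X])$, and a continuous choice of nearby projection is available in the commutative setting.

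The only real content beyond bookkeeping is making sure the correspondence between the two notions of "continuous representation" is faithful: a strongly continuous $*$-homomorphism representation $\mathcal D[\vec E] \to \mathcal D[\vec F]$ restricts to a Boolean-algebra representation that is continuous in the sense of \cite{FaSh} (i.e., as a map between the Polish spaces $P(\mathbb N)$, $P(\mathbb N)$ with the product topology). I expect the main obstacle — though a mild one — to be the converse packaging step: showing that the Boolean isomorphism $\Psi$ really does lift to a genuine C*-isomorphism of the commutative quotients, so that Theorem \ref{1}(2) can be invoked. This is where the analytic/Borel hypothesis on the ideals is used to ensure the Gelfand-dual spaces behave well, but since we only need \emph{existence} of the C*-isomorphism and not its definability, Gelfand–Naimark duality applied to the spectra does the job directly. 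Thus the corollary is a direct corollary of Theorem \ref{1}(2) together with the center–Boolean-algebra dictionary described in \S2.
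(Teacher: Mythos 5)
Your proposal is correct and takes essentially the same route as the paper: specialize to the partition into singletons so that $\mathcal{D}[\vec{E}]\cong\ell_\infty$ and $\mathcal{C}^{\mathcal{J}}[\vec{E}]\cong C(st(P(\mathbb{N})/\mathcal{J}))$, convert the Boolean isomorphism into a C*-isomorphism via Stone/Gelfand duality, invoke Theorem \ref{1}(2) for a continuous representation, and translate that back into a continuous representation of the Boolean isomorphism. The paper's proof is terser and takes the final translation step for granted, whereas you flag (correctly, if routinely) the need to nudge $\Phi_*(P_X)$ to a nearby projection; this is a useful clarification but not a different argument.
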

 \begin{proof}
   Let $E_{n}=\{n\}$. Then $\mathcal{D}[\vec{E}]\cong \ell_{\infty}$ is the standard atomic masa (maximal abelian subalgebra) of $B(H)$ and for
\begin{equation}
\nonumber \hat{\mathcal{J}}=\{(\alpha_{n})\in \ell_{\infty} : \lim_{n\rightarrow\mathcal{J}} \alpha_{n}=0 \}
\end{equation}
  clearly $\mathcal{C}^{\mathcal{J}}[\vec{E}]= \ell_{\infty}/\hat{\mathcal{J}}\cong C(st(P(\mathbb{N})/\mathcal{J}))$ where $st(P(\mathbb{N})/\mathcal{J})$ is the Stone space of $P(\mathbb{N})/\mathcal{J}$. The duality between categories implies that
 every isomorphism $\Phi$ between $P(\mathbb{N})/\mathcal{I}$ and $P(\mathbb{N})/\mathcal{J}$ corresponds to an isomorphism $\tilde{\Phi}$ between $C(st(P(\mathbb{N})/\mathcal{I}))$ and $C(st(P(\mathbb{N})/\mathcal{J}))$. The continuous map witnessing the topological triviality of $\tilde{\Phi}$ corresponds to a continuous map witnessing the topological triviality of $\Phi$.
 \end{proof}
 For any partition $\vec{E}$ let  $Z(\mathcal{C}^{\mathcal{J}}[\vec{E}])$ denote the center of $\mathcal{C}^{\mathcal{J}}[\vec{E}]$ and $U(n)$ be the compact group of all unitary $n\times n$ matrices equipped with the bi-invariant normalized Haar measure $\mu$. More generally the following are true.
\begin{lemma}\label{987}
For any ideal $\mathcal{J}$
\begin{equation}
\nonumber Z(\mathcal{C}^{\mathcal{J}}[\vec{E}])= \frac{Z(\mathcal{D}[\vec{E}])} {\mathcal{D}^{\mathcal{J}}[\vec{E}]\cap Z(\mathcal{D}[\vec{E}])}.
\end{equation}
\end{lemma}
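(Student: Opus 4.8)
The plan is to realize the right-hand side as a concrete $*$-subalgebra of $\mathcal{C}^{\mathcal{J}}[\vec{E}]$ through the obvious map and then prove that this map hits the whole center. Let $\pi_{\mathcal{J}}\colon\mathcal{D}[\vec{E}]\to\mathcal{C}^{\mathcal{J}}[\vec{E}]$ be the quotient map. Since $\pi_{\mathcal{J}}$ is a surjective $*$-homomorphism it carries $Z(\mathcal{D}[\vec{E}])$ into $Z(\mathcal{C}^{\mathcal{J}}[\vec{E}])$, and the kernel of the restriction of $\pi_{\mathcal{J}}$ to $Z(\mathcal{D}[\vec{E}])$ is exactly $\mathcal{D}^{\mathcal{J}}[\vec{E}]\cap Z(\mathcal{D}[\vec{E}])$, so there is an induced injective $*$-homomorphism
\[
\iota\colon \frac{Z(\mathcal{D}[\vec{E}])}{\mathcal{D}^{\mathcal{J}}[\vec{E}]\cap Z(\mathcal{D}[\vec{E}])}\longrightarrow Z(\mathcal{C}^{\mathcal{J}}[\vec{E}]).
\]
An injective $*$-homomorphism between C*-algebras is isometric, so the lemma reduces to showing that $\iota$ is surjective: every $a=(a_{n})\in\mathcal{D}[\vec{E}]$ with $\pi_{\mathcal{J}}(a)$ central agrees, modulo $\mathcal{D}^{\mathcal{J}}[\vec{E}]$, with some element of $Z(\mathcal{D}[\vec{E}])$.

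The tool I would use for surjectivity is the center-valued conditional expectation on each block, realized as an average over the unitary group. Identifying $\mathcal{D}[\vec{E}]$ with $\prod_{n}\mathbb{M}_{|E_{n}|}(\mathbb{C})$, for each $n$ set $\mathcal{E}_{n}\colon\mathbb{M}_{|E_{n}|}(\mathbb{C})\to\mathbb{C}I_{|E_{n}|}$, $\mathcal{E}_{n}(x)=\int_{U(|E_{n}|)}uxu^{*}\,d\mu(u)$. By left-invariance of the Haar measure $\mathcal{E}_{n}(x)$ commutes with every unitary, hence lies in $\mathbb{C}I_{|E_{n}|}=Z(\mathbb{M}_{|E_{n}|}(\mathbb{C}))$; moreover $\|\mathcal{E}_{n}(x)\|\le\|x\|$, and because $uu^{*}=1$,
\[
x-\mathcal{E}_{n}(x)=\int_{U(|E_{n}|)}\bigl(x-uxu^{*}\bigr)\,d\mu(u)=\int_{U(|E_{n}|)}(xu-ux)u^{*}\,d\mu(u),
\]
so $\|x-\mathcal{E}_{n}(x)\|\le\sup_{u\in U(|E_{n}|)}\|xu-ux\|$. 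Given $a$ as above I would put $b=(\mathcal{E}_{n}(a_{n}))_{n}$; then $b\in Z(\mathcal{D}[\vec{E}])$ since $\|b\|\le\|a\|<\infty$ and each coordinate is central, and $\iota$ sends the class of $b$ to $\pi_{\mathcal{J}}(b)$, so it suffices to prove $a-b\in\mathcal{D}^{\mathcal{J}}[\vec{E}]$, i.e. $\lim_{n\to\mathcal{J}}\|a_{n}-\mathcal{E}_{n}(a_{n})\|=0$.

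This last step is where centrality of $\pi_{\mathcal{J}}(a)$ enters, and it is the only point that needs care. If the limit were nonzero there would be $\varepsilon>0$ and $S\notin\mathcal{J}$ with $\|a_{n}-\mathcal{E}_{n}(a_{n})\|\ge\varepsilon$ for all $n\in S$; by the estimate above together with compactness of $U(|E_{n}|)$, for each $n\in S$ one can choose $u_{n}\in U(|E_{n}|)$ with $\|a_{n}u_{n}-u_{n}a_{n}\|\ge\varepsilon$, and one sets $u_{n}=I_{|E_{n}|}$ for $n\notin S$. Then $u=(u_{n})$ is a unitary of $\mathcal{D}[\vec{E}]$ and $\{n:\|(au-ua)_{n}\|\ge\varepsilon\}\supseteq S\notin\mathcal{J}$, so $au-ua\notin\mathcal{D}^{\mathcal{J}}[\vec{E}]$, contradicting $\pi_{\mathcal{J}}(a)\pi_{\mathcal{J}}(u)=\pi_{\mathcal{J}}(u)\pi_{\mathcal{J}}(a)$. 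Hence $\pi_{\mathcal{J}}(a)=\pi_{\mathcal{J}}(b)$ lies in the range of $\iota$, so $\iota$ is onto and therefore a $*$-isomorphism, which is exactly the asserted identity. The substance of the argument is that the averaging formula for $\mathcal{E}_{n}$ converts the blockwise distance to the center into a uniform lower bound on blockwise commutators with unitaries, which is precisely what lets a single global unitary of $\mathcal{D}[\vec{E}]$ witness the failure of centrality on a $\mathcal{J}$-positive set; if one prefers to avoid the Haar integral, $\mathcal{E}_{n}$ may be replaced by any choice of nearest scalar together with the elementary estimate $\mathrm{dist}(x,\mathbb{C}I)\le\sup_{u\in U(k)}\|xu-ux\|$ obtained from the spectral theorem, at the cost of worse constants.
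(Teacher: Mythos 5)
Your proof is correct and follows the same strategy as the paper: realize the right-hand side as a subalgebra of $Z(\mathcal{C}^{\mathcal{J}}[\vec{E}])$ via the quotient map, then lift any central element by blockwise averaging over the unitary group with respect to Haar measure. The only difference is that the paper simply asserts $a - a' \in \mathcal{D}^{\mathcal{J}}[\vec{E}]$ without justification, whereas you supply the missing argument — the estimate $\|x - \mathcal{E}_n(x)\| \le \sup_u \|xu - ux\|$ together with a diagonal unitary $u = (u_n)$ witnessing the failure of centrality on a $\mathcal{J}$-positive set — which is exactly the detail a careful reader would want.
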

\begin{proof}
Clearly we have $Z(\mathcal{D}[\vec{E}])/ (\mathcal{D}^{\mathcal{J}}[\vec{E}]\cap Z(\mathcal{D}[\vec{E}]))\subseteq Z(\mathcal{C}^{\mathcal{J}}[\vec{E}])$. For the other direction it is enough to show that for every $a+ \mathcal{D}^{\mathcal{J}}[\vec{E}]\in Z(\mathcal{C}^{\mathcal{J}}[\vec{E}])$ there exists a $a^{\prime}\in Z(\mathcal{D}[\vec{E}])$ such that $a-a^{\prime}\in \mathcal{D}^{\mathcal{J}}[\vec{E}] $, in other words every element of $Z(\mathcal{C}^{\mathcal{J}}[\vec{E}])$ can be lifted to an element of $Z(\mathcal{D}[\vec{E}])$. Let $a=(a_{n})$ be such that each $a_{n}$ belongs to $M_{|E_{n}|}(\mathbb{C})$ and $a+ \mathcal{D}^{\mathcal{J}}[\vec{E}]\in Z(\mathcal{C}^{\mathcal{J}}[\vec{E})]$. For every $n$ let
\begin{equation}
\nonumber a^{\prime}_{n}=\int_{u\in U(|E_{n}|)} ua_{n}u^{*} d\mu
\end{equation}
and since $\mu$ is bi-invariant, for every unitary $u\in M_{|E_{n}|}(\mathbb{C})$ we have $ua^{\prime}_{n}u^{*}=a^{\prime}_{n}$.
If $a^{\prime}=(a^{\prime}_{n})$ then  $a^{\prime}\in Z(\mathcal{D}[\vec{E}])$ and $a-a^{\prime}\in \mathcal{D}^{\mathcal{J}}[\vec{E}]$.
\end{proof}

\begin{proposition}
 $Z(\mathcal{C}^{\mathcal{J}}[\vec{E}])\cong C(st(P(\mathbb{N})/\mathcal{J}))$.
\end{proposition}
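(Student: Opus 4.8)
The plan is to combine the previous two results in the excerpt. By Lemma \ref{987} we have
\begin{equation}
\nonumber Z(\mathcal{C}^{\mathcal{J}}[\vec{E}]) = Z(\mathcal{D}[\vec{E}]) \big/ \bigl(\mathcal{D}^{\mathcal{J}}[\vec{E}] \cap Z(\mathcal{D}[\vec{E}])\bigr),
\end{equation}
so the first step is to identify the center $Z(\mathcal{D}[\vec{E}])$ concretely. Since $\mathcal{D}[\vec{E}] \cong \prod_{n} \mathbb{M}_{|E_{n}|}(\mathbb{C})$ and the center of a matrix algebra $\mathbb{M}_{k}(\mathbb{C})$ is just the scalars $\mathbb{C}I$, the center of the product is $\prod_{n} \mathbb{C}I_{|E_{n}|} \cong \ell_{\infty}$, where the isomorphism sends $(\lambda_{n} I_{|E_{n}|})_{n}$ to the bounded sequence $(\lambda_{n})_{n}$. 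Under this identification, the closed ideal $\mathcal{D}^{\mathcal{J}}[\vec{E}] \cap Z(\mathcal{D}[\vec{E}])$ consists of those central elements $(\lambda_{n} I_{|E_{n}|})_{n}$ with $\lim_{n \to \mathcal{J}} \|\lambda_{n} I_{|E_{n}|}\| = \lim_{n \to \mathcal{J}} |\lambda_{n}| = 0$, which corresponds exactly to the ideal $\hat{\mathcal{J}} = \{(\alpha_{n}) \in \ell_{\infty} : \lim_{n \to \mathcal{J}} \alpha_{n} = 0\}$ appearing in the proof of the corollary about Boolean algebras above.

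Putting these together, the second step is to conclude
\begin{equation}
\nonumber Z(\mathcal{C}^{\mathcal{J}}[\vec{E}]) \cong \ell_{\infty} \big/ \hat{\mathcal{J}}.
\end{equation}
The final step is to invoke the Gelfand-Naimark duality exactly as in that earlier corollary: $\ell_{\infty} \cong C(\beta\mathbb{N})$ and the quotient by the closed ideal $\hat{\mathcal{J}}$ corresponds to restricting to the closed subspace of the maximal ideal space cut out by $\hat{\mathcal{J}}$, which is precisely the Stone space $st(P(\mathbb{N})/\mathcal{J})$. Hence $\ell_{\infty}/\hat{\mathcal{J}} \cong C(st(P(\mathbb{N})/\mathcal{J}))$, giving the claim. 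One should note the isomorphism is canonical and does not depend on the forcing extension — this is a ZFC fact, unlike Theorem \ref{1}.

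I do not expect any serious obstacle here; the proposition is essentially a bookkeeping consequence of Lemma \ref{987} plus the standard commutative picture. The only point requiring a word of care is the identification of the ideal $\mathcal{D}^{\mathcal{J}}[\vec{E}] \cap Z(\mathcal{D}[\vec{E}])$ with $\hat{\mathcal{J}}$: one uses the equivalent description $\mathcal{D}^{\mathcal{J}}[\vec{E}] = \{(a_{n}) \in \mathcal{D}[\vec{E}] : \lim_{n \to \mathcal{J}} \|a_{n}\| = 0\}$ recorded after Definition \ref{129}, so that intersecting with the center and passing through the isomorphism $Z(\mathcal{D}[\vec{E}]) \cong \ell_{\infty}$ is immediate since $\|\lambda_{n} I_{|E_{n}|}\| = |\lambda_{n}|$. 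After that the duality step is verbatim the argument already given for $P(\mathbb{N})/\mathcal{I}$ versus $C(st(P(\mathbb{N})/\mathcal{I}))$, so the proof is short.
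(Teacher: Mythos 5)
Your proposal is correct and follows essentially the same route as the paper: apply Lemma \ref{987}, identify $Z(\mathcal{D}[\vec{E}])\cong\ell_{\infty}$ and $\mathcal{D}^{\mathcal{J}}[\vec{E}]\cap Z(\mathcal{D}[\vec{E}])\cong\hat{\mathcal{J}}$, and then invoke Gelfand duality exactly as in the preceding corollary. The paper's proof is a terser version of the same bookkeeping; your filled-in details (the scalar-diagonal picture and the norm computation $\|\lambda_n I_{|E_n|}\|=|\lambda_n|$) are correct.
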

\begin{proof}
Clearly we have $Z(\mathcal{D}[\vec{E}])\cong \ell_{\infty}$ and $\mathcal{D}^{\mathcal{J}}[\vec{E}]\cap Z(\mathcal{D}[\vec{E}])\cong \hat{\mathcal{J}}$.
Therefore by lemma \ref{987} we have  $Z(\mathcal{C}^{\mathcal{J}}[\vec{E}])\cong \ell_{\infty}/\hat{\mathcal{J}}\cong C(st(P(\mathbb{N})/\mathcal{J}))$.
\end{proof}


\section{groupwise silver forcing and forcings of the form $\mathbb{P}_{\mathcal{I}}$}

In this section we introduce the forcing used in this context and provide some preliminaries on the properties of these forcings which are used throughout this paper. To see more on forcing and these properties the reader may refer to \cite{Bart} and \cite{ShProper}.

 A forcing notion $\mathbb{P}$ is called to be $Suslin$ if its underlying set is an
analytic set of reals and both $\leq$ and $\perp$ are analytic relations.

The following is similar to infinitely equal forcing EE \cite[\S 7.4.C]{Bart}.
Let $\vec{I} = (I_n)$ be a partition of $\mathbb{N}$ into non-empty finite intervals and $G_{n}$ be a finite set, for each $n\in\mathbb{N}$. We denote the set of the \emph{reals} by $\mathbb{R}=\prod_{n}G_{n}$ endowed with the product topology. For each $n$ define $F_{n}^{\vec{I}}=\prod_{i\in I_{n}}G_{i}$ and let $F^{\vec{I}}=\prod_{n\in\mathbb{N}}F^{\vec{I}}_{n}$.
 Moreover for any $X\subseteq \mathbb{N}$ let $F_{X}^{\vec{I}}=\prod_{n\in X}F^{\vec{I}}_{n}$. In particular if $X$ is an interval such as $\{k, k+1,\dots, \ell\}$ we use $F_{[k,\ell]}$ to denote $F_{\{k, k+1, \dots, \ell\}}$. For a fixed partition $\vec{I}$ we sometimes drop the superscript $\vec{I}$.

 Fix  a partition $\vec{I}=(I_{n})$  of the natural numbers into finite intervals.
Define  the groupwise Silver forcing $\mathbb{S}_{F^{\vec{I}}}$  associated with $F^{\vec{I}}$, to be the following forcing notion: A condition $p\in \mathbb{S}_{F^{\vec{I}}}$ is a function from $M\subseteq \mathbb{N}$ into $\bigcup_{n=0}^{\infty} F_{n}^{\vec{I}}$, such that $\mathbb{N} \setminus M$ is infinite and $p(n)\in F_{n}^{\vec{I}}$.  A condition $p$ is stronger than $q$ if $p$ extends $q$. Each condition $p$ can be identified with $[p]$, the set of all its extensions to $\mathbb{N}$, as a compact subset of $F^{\vec{I}}$.  For a generic $G$, $f = \bigcup \{p: p\in G\}$ is the generic real.


 Recall that a forcing notion $\mathbb{P}$ is $\omega^{\omega}$-\emph{bounding} if for every $p\in \mathbb{P}$ and a $\mathbb{P}$-name for a function $\dot{f}: \omega\rightarrow \omega$ there are $q\leq p$ and $g\in \omega^{\omega}\cap V$ such that $q\Vdash \dot{f}(\check{n})\leq \check{g}(\check{n}) ~~\forall n$.
 \begin{theorem}
$\mathbb{S}_{F^{\vec{I}}}$ is a  proper and $\omega^{\omega}$-bounding forcing.
\end{theorem}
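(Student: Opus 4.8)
The plan is to establish the two properties separately, since $\omega^\omega$-bounding is the easier of the two and properness requires a fusion argument adapted to the groupwise structure. Throughout I identify a condition $p \in \mathbb{S}_{F^{\vec I}}$ with its domain $\operatorname{dom}(p) \subseteq \mathbb{N}$ (whose complement is infinite) together with the values $p(n) \in F_n^{\vec I}$; the key combinatorial feature is that the ``free coordinates'' $\mathbb{N}\setminus\operatorname{dom}(p)$ can be split into infinitely many finite pieces, and fixing the generic value on any finite union of such pieces only removes a finite part of the freedom.

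For $\omega^\omega$-bounding, I would argue exactly as for ordinary Silver forcing. Given $p$ and a name $\dot f$ for a function $\omega \to \omega$, I build a fusion sequence $p = p_0 \geq p_1 \geq p_2 \geq \cdots$ together with a fixed increasing enumeration $\{m_0 < m_1 < \cdots\} = \mathbb{N}\setminus\operatorname{dom}(p)$ of the original free coordinates, arranging that $p_{k+1}$ agrees with $p_k$ except possibly on the coordinate $m_k$, that $\operatorname{dom}(p_{k+1}) \supseteq \operatorname{dom}(p_k) \cup \{m_k\}$ is still co-infinite, and that for every one of the finitely many ways $s$ of filling in the coordinates $m_0,\dots,m_{k-1}$ the condition $p_k \cup s$ decides $\dot f(k)$, say with value below some $N_{k,s}$. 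Setting $g(k) = \max_s N_{k,s}$ (a finite max, since each $F_{m_i}^{\vec I}$ is finite) and letting $q = \bigcap_k p_k$ be the fusion, $q$ is a legitimate condition (its domain is $\operatorname{dom}(p)$ together with all the $m_k$, which is co-infinite because we never touched the coordinates outside the enumerated list) and $q \Vdash \dot f(k) \leq g(k)$ for all $k$.

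For properness I would show the stronger ``Axiom A''-style statement: there is a sequence of orderings $\leq_n$ on $\mathbb{S}_{F^{\vec I}}$ (with $q \leq_n p$ meaning $q \leq p$ and $q$ agrees with $p$ on the first $n$ enumerated free coordinates of $p$) witnessing the fusion property, and that $\mathbb{S}_{F^{\vec I}}$ has pure decision / the property that below any condition and for any name for an ordinal one can find a stronger condition deciding it after fixing only finitely many free coordinates. Concretely, given a countable elementary submodel $M \prec H_\theta$ with $\mathbb{S}_{F^{\vec I}}, p \in M$, enumerate the dense sets $D_k \in M$ and, working inside $M$, build a fusion sequence $p = p_0 \geq_0 p_1 \geq_1 \cdots$ so that $p_{k+1} \in M$ and for every filling $s$ of the first $k$ free coordinates the condition $p_{k+1}\cup s$ lies in $D_k$; this is possible because at stage $k$ there are only finitely many such $s$, and for each one the set of extensions in $D_k$ is dense below $p_k \cup s$, so finitely many successive strengthenings — each affecting only the one new free coordinate — suffice. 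The fusion $q = \bigcap_k p_k$ is a condition, and since every extension of $q$ filling the first $k$ coordinates meets $D_k$, $q$ is $(M,\mathbb{S}_{F^{\vec I}})$-generic.

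The main obstacle is verifying that the fusion $q$ is genuinely a condition, i.e.\ that $\mathbb{N}\setminus\operatorname{dom}(q)$ remains infinite. This is exactly where the bookkeeping matters: one must fix in advance the enumeration $m_0 < m_1 < \cdots$ of the free coordinates of $p$ and insist that $p_{k+1}$ only ever fills coordinates from $\{m_0,\dots,m_k\}$ (never the coordinates $m_{k+1}, m_{k+2},\dots$), so that at every finite stage infinitely many of the original free coordinates survive untouched and hence $\operatorname{dom}(q) = \operatorname{dom}(p) \cup \{m_k : k \in \omega\}$, whose complement is infinite precisely because $\mathbb{N}\setminus\operatorname{dom}(p)$ was. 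A secondary point to check is that strengthening ``$p_k \cup s \in D_k$ for all $s$'' can be achieved in finitely many steps without destroying the requirements already secured at earlier stages; this follows because strengthening on coordinate $m_k$ does not disturb the decisions made about coordinates $m_0,\dots,m_{k-1}$. Once these two bookkeeping points are in place the argument is the standard Silver-forcing fusion, and properness and $\omega^\omega$-bounding both fall out.
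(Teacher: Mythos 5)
Your overall plan---prove both properties by a fusion argument along orderings $\leq_n$, with properness via an Axiom~A style construction---is the same as the paper's. But the bookkeeping you set up in the ``main obstacle'' paragraph is both unachievable and, even if it were achievable, would not produce a condition. You fix the enumeration $m_0<m_1<\cdots$ of \emph{all} of $\mathbb{N}\setminus\operatorname{dom}(p)$ and insist that $p_{k+1}$ ``only ever fills coordinates from $\{m_0,\dots,m_k\}$, never $m_{k+1},m_{k+2},\dots$,'' concluding that $\operatorname{dom}(q)=\operatorname{dom}(p)\cup\{m_k:k\in\omega\}$ is co-infinite. But $\{m_k:k\in\omega\}$ is the whole complement of $\operatorname{dom}(p)$, so this gives $\operatorname{dom}(q)=\mathbb{N}$, and $q$ is not a legal condition. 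Worse, the construction cannot even be carried out: to make $p_k\cup s$ decide $\dot f(k)$, or to push it into a dense set $D_k$, one must pass to an extension whose domain may be larger by arbitrarily (even infinitely) many new coordinates, not just by $m_k$. Freezing all of $m_{k+1},m_{k+2},\dots$ for the rest of the construction is incompatible with that.

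The correct bookkeeping, which is what the paper uses, is to let $q\leq_n p$ mean ``$p\subseteq q$ and the first $n$ elements of $\mathbb{N}\setminus\operatorname{dom}(p)$ are still outside $\operatorname{dom}(q)$,'' and to allow $p_{n+1}\leq_n p_n$ to fill arbitrary further coordinates. The free coordinates of $p_n$ thus change from stage to stage, but the first $n$ of them are frozen at stage $n$ and remain frozen ever after; the free coordinates of the fusion are the increasing union of these frozen initial segments and so form an infinite set. At stage $n$ you enumerate the finitely many fillings $s$ of the first $n$ (current) free coordinates and, one at a time, strengthen $p_n$ in the $\leq_n$ sense (free to shrink arbitrarily beyond those $n$ coordinates) so that $p_n\cup s$ decides $\dot f(n)$, respectively lands in the dense open set $D_n$; since $D_n$ is open, earlier successes are not undone by later shrinking. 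Once you replace your ``only touch $\{m_0,\dots,m_k\}$'' restriction by the $\leq_n$ requirement, the rest of your outline is the same as the paper's proof.
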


\begin{proof}
Let $\mathcal{M}\prec H_{\theta}$ for a large enough $\theta$, be a countable transitive model of \textbf{ZFC} containing  $\vec{I}$ and $\mathbb{S}_{F^{\vec{I}}}$. Suppose $\{A_{n}~:~n\in\mathbb{N}\}$ is the set of all maximal antichains in $\mathcal{M}$ and $q\in \mathbb{S}_{F^{\vec{I}}}$ is given. First we claim that there exists $p\in\mathbb{S}_{F^{\vec{I}}}$ such that for every $n$ the set $\{q\in A_{n} ~:~ q~is~compatible ~with~p\}$ is finite. To see this let $p\leq_n q$ if and only if $q\subset p$ and the first $n$ elements that are not in the domain of $q$ are not in the domain of $p$.
We build a fusion sequence $ p_{0}\geq_{0} p_{1}\geq_{1} \dots p_{n}\geq_{n} p_{n+1}\geq_{n+1} \dots$ recursively. For the given $q$ let $p_{0}=q$ and suppose $p_{n}$ is chosen. Let $B= \{k_{1} \dots k_{n}\}$ be the set of first $n$ elements of $\mathbb{N} \setminus dom(p_n)$ ordered increasingly.  Since $A_{n}$ is a maximal antichain, $p_{n}$ is compatible with some $s\in A_{n}$. Let $p_{n+1}= p_{n}\cup s\upharpoonright_{(k_{n},\infty)}$. Note that $p_{n+1}$ is compatible with only finitely many elements of $A_{n}$, namely, only possibly those elements $t\in A_{n}$ which $t(i)\neq s(i)$ for some $i\in [0, k_{n})$. Let $p=\bigcup_{n} p_{n}$ be the fusion of the above sequence. For every $n$ the set $C_{n}= \{q\in A_{n} ~:~ q~is~compatible ~with~p\}$ is finite and predense below $p$ for every $n$. Therefore $A_{n}\cap \mathcal{M}$ contains $C_{n}$ and is predense below $p$.

To see $\mathbb{S}_{F^{\vec{I}}}$ is $\omega^{\omega}$-bounding assume $\dot{f}$ is an $\mathbb{S}_{F^{\vec{I}}}$-name such that $q\Vdash \dot{f} : \mathbb{N} \rightarrow\mathbb{N}$. As above we build a fusion sequence $ q=p_{0}\geq_{0} p_{1}\geq_{1} \dots p_{n}\geq_{n} p_{n+1}\geq_{n+1} \dots$. Let $B$ be defined as above and $\{r_{j} : j< k\}$ be the list of all functions $r : B \rightarrow \bigcup_{i\in C} F_{i}^{\vec{I}}$ where $C$ is a finite set such that for every $1\leq j \leq n$ we have $r(k_{j})\in F_{i}^{\vec{I}}$ for some $i\in C$. Successively find $p_n = p_{n}^{0}\geq_{n} p_{n}^{1}\geq_{n} \dots \geq_{n}p_{n}^{k-1} $ such that:
\begin{equation}
\nonumber p_{n}^{j} \cup r_{j}\Vdash \dot{f}(n) = \check{a}_{n}^j.
\end{equation}

Let $p_{n+1}=\bigcup p_{n}^{j}$ and $D_n= \{a_{n}^j : j<k \}$.  Now the fusion of this sequence $p$ forces that for every $n$ we have $f(n)\in D_{n}$. Define a ground model map $g : \mathbb{N} \rightarrow\mathbb{N}$ by $g(n)$ to be the largest element of $D_{n}$. Therefore $p$ forces that $g(n)\geq f(n)$ for all $n$.

\end{proof}

 The following property is the main reason that we use $\mathbb{S}_{F^{\vec{I}}}$ in this context.
\begin{definition}\label{125}
 We say a forcing notion $\mathbb{P}$ captures $F^{\vec{I}}$ if there exists a $\mathbb{P}$-name for a real $\dot{x}$  such that for every $p\in\mathbb{P}$ there is an infinite $M\subseteq \mathbb{N}$ such that for every $a\in F^{\vec{I}}_{M}$ there is $q_{a}\leq p$ such that $q_{a}\Vdash \dot{x}\upharpoonright_{ M}  = \check{a}$.
\end{definition}

\begin{lemma}
For any partition $\vec{I}$ of $\mathbb{N}$ into finite intervals, $\mathbb{S}_{F^{\vec{I}}}$ captures $F^{\vec{I}}$.
\end{lemma}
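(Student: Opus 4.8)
The plan is to exhibit an explicit name $\dot{x}$ for a real in $\mathbb{R}=\prod_n G_n$ and verify the capturing property directly from the definition of $\mathbb{S}_{F^{\vec{I}}}$. The natural candidate is the generic real itself: recall that a generic $G$ yields $f=\bigcup\{p:p\in G\}\in F^{\vec{I}}=\prod_n F_n^{\vec{I}}$, and since $F_n^{\vec{I}}=\prod_{i\in I_n}G_i$, concatenating the coordinates gives an element of $\prod_i G_i=\mathbb{R}$. So I would let $\dot{x}$ be the canonical name for this generic real, or equivalently work directly with the name $\dot{g}$ for the generic element of $F^{\vec{I}}$ (the two carry the same information, and $F_M^{\vec{I}}$-fragments of one correspond to fragments of the other); for notational cleanliness I will phrase everything in terms of $\dot{g}\in F^{\vec{I}}$ and the sets $F_M^{\vec{I}}$.

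Next I would fix an arbitrary condition $p\in\mathbb{S}_{F^{\vec{I}}}$, so $p:N\to\bigcup_n F_n^{\vec{I}}$ with $\mathbb{N}\setminus N$ infinite, and set $M=\mathbb{N}\setminus\mathrm{dom}(p)$, which is infinite by definition of the forcing. I claim this $M$ works. Given any $a\in F_M^{\vec{I}}$, i.e. a function assigning to each $n\in M$ an element $a(n)\in F_n^{\vec{I}}$, define $q_a=p\cup a$, viewed as a function on $\mathrm{dom}(p)\cup M=\mathbb{N}$. The only subtlety is that a legitimate condition must omit infinitely many coordinates, whereas $q_a$ as written has domain all of $\mathbb{N}$; I would fix this by thinning — choose an infinite co-infinite $M'\subseteq M$ and set $q_a=p\cup(a\upharpoonright (M\setminus M'))$, so $q_a$ is a genuine condition with $\mathrm{dom}(q_a)=\mathbb{N}\setminus M'$, and restate the claim with $M\setminus M'$ (still infinite) in place of $M$. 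Then $q_a\leq p$ since $q_a\supseteq p$, and $q_a$ decides $\dot{g}\upharpoonright(M\setminus M')$ to be $a\upharpoonright(M\setminus M')$ because every extension of $q_a$ to a total function, hence the generic real below $q_a$, must agree with $q_a$ on its domain. This is exactly the statement $q_a\Vdash \dot{g}\upharpoonright(M\setminus M')=\check{a}$, and translating back through the identification $F_n^{\vec{I}}=\prod_{i\in I_n}G_i$ gives the corresponding statement for $\dot{x}$ restricted to $\bigcup_{n\in M\setminus M'}I_n$.

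There is essentially no hard part here — the lemma is a direct unwinding of definitions — but the one place requiring a little care is the bookkeeping between the three objects: the generic real $\dot{x}\in\prod_i G_i$, the generic element $\dot{g}\in F^{\vec{I}}=\prod_n F_n^{\vec{I}}$, and the "block" index sets; I should make sure that "$\dot{x}\upharpoonright_M=\check{a}$" in Definition \ref{125} is read with $a\in F^{\vec{I}}_M$ as stated (so $M$ indexes blocks $F^{\vec{I}}_n$, not individual coordinates in $G_i$), in which case no translation is even needed and $\dot{x}=\dot{g}$ literally. The second point to state cleanly is why $M=\mathbb{N}\setminus\mathrm{dom}(p)$ can be taken infinite and why the thinned condition $q_a$ is still a legal member of $\mathbb{S}_{F^{\vec{I}}}$ (both follow immediately from the requirement in the definition that conditions have infinite complement of their domain). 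I would close by remarking that this is precisely the property isolated in Definition \ref{125}, so $\mathbb{S}_{F^{\vec{I}}}$ captures $F^{\vec{I}}$.
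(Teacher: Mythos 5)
Your proof is correct and takes essentially the same approach as the paper: take $\dot{x}$ to be the canonical name for the generic real, choose an infinite $M\subseteq\mathbb{N}\setminus\mathrm{dom}(p)$ leaving $\mathbb{N}\setminus(M\cup\mathrm{dom}(p))$ infinite, and let $q_a=p\cup a$. The paper simply picks such an $M$ directly rather than starting with all of $\mathbb{N}\setminus\mathrm{dom}(p)$ and thinning, but this is a cosmetic difference.
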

\begin{proof}
Suppose $\dot{x}$ is the canonical $\mathbb{S}_{F^{\vec{I}}}$-name for the generic real and $p\in\mathbb{S}_{F^{\vec{I}}}$ is given. Let $M$ be an infinite subset of $\mathbb{N}\setminus dom(p)$ such that $\mathbb{N}\setminus (M\cup dom(p))$ is also infinite. For every $a\in F^{\vec{I}}_M$ let $q_{a}= p \cup a$. Since $\mathbb{N}\setminus (M\cup dom(p))$ is infinite, $q_{a}$ is a condition in $\mathbb{S}_{F^{\vec{I}}}$ and  $q_{a}\Vdash \dot{x}\upharpoonright_{ M} = \check{a}$.
\end{proof}
 \begin{lemma}\label{100}
 For every $\mathbb{S}_{F^{\vec{I}}}$-name  for a real $\dot{x}$ and $q\in \mathbb{S}_{F^{\vec{I}}}$ there are $p\leq q$ and a continuous function $f:p\rightarrow \mathbb{R}$ such that $p$ forces $f(\dot{r}_{gen})= \dot{x}$, where $\dot{r}_{gen}$ is the canonical name for the generic real.
\end{lemma}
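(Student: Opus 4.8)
The plan is to build, by a fusion argument, a condition $p \le q$ together with a function $f$ defined on $[p]$ that reads off $\dot{x}$ continuously from the generic real. Since $\mathbb{S}_{F^{\vec I}}$ is $\omega^\omega$-bounding, I may as well assume $\dot{x}$ is a name for an element of $\prod_n G_n$ (or of $\mathbb{N}^{\mathbb{N}}$ — the argument is identical), and it suffices to decide $\dot{x}\restriction m$ for each $m$ on a clopen partition of $[p]$ whose pieces shrink as $m\to\infty$. Concretely, I would recursively construct a fusion sequence $q = p_0 \ge_0 p_1 \ge_1 p_2 \ge_2 \cdots$, using the ordering $\le_n$ from the properness proof ($r \le_n s$ iff $s \subseteq r$ and the first $n$ elements of $\mathbb{N}\setminus\operatorname{dom}(s)$ are also outside $\operatorname{dom}(r)$). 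At stage $n$, let $B_n = \{k_1 < \dots < k_n\}$ be the first $n$ elements of $\mathbb{N}\setminus\operatorname{dom}(p_n)$, and let $\{r_j : j < \ell\}$ enumerate all functions $r$ with domain $B_n$ and $r(k_i) \in F_{k_i}^{\vec I}$. Exactly as in the $\omega^\omega$-bounding proof, successively shrink $p_n = p_n^0 \ge_n p_n^1 \ge_n \cdots \ge_n p_n^{\ell-1}$ so that each $p_n^j \cup r_j$ decides $\dot{x}\restriction n$, say $p_n^j \cup r_j \Vdash \dot{x}\restriction n = \check{s}_{n,j}$, and set $p_{n+1} = \bigcup_{j<\ell} p_n^j$.

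Now let $p = \bigcup_n p_n$ be the fusion. The key point is that $[p]$ is partitioned, for each $n$, into the finitely many clopen pieces determined by the value of the generic on $B_n$: if $r_{gen}$ is the generic real, the values $r_{gen}\restriction B_n$ determine which $r_j$ was used, hence (by the construction, since $p \le_n p_{n+1}$ and $p_{n+1}$ already incorporated the decision) $p$ forces $\dot{x}\restriction n = \check{s}_{n,j}$ on that piece. Thus I define $f : [p] \to \mathbb{R}$ by setting, for $y \in [p]$, $f(y)\restriction n = s_{n,j}$ where $j$ is the unique index with $y\restriction B_n = r_j$. Coherence of the $s_{n,j}$ across $n$ follows because the conditions are nested and forcing is consistent: if $y\restriction B_{n+1} = r_{j'}$ and $y\restriction B_n = r_j$, then $p_{n+1}^{j'} \cup r_{j'}$ refines something compatible with $p_n^j \cup r_j$, so $s_{n+1,j'}$ extends $s_{n,j}$. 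Hence $f(y)$ is a well-defined real, and $f$ is continuous because $f(y)\restriction n$ depends only on $y\restriction B_n$, i.e. on finitely many coordinates. Finally $p \Vdash f(\dot{r}_{gen}) = \dot{x}$ by construction.

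The main obstacle — really the only delicate point — is bookkeeping the fusion so that the decisions at stage $n$ are genuinely respected by the fusion condition $p$: one must check that because $p \le_n p_{n+1}$ for every $n$, the elements of $B_n$ computed relative to $p_n$ remain outside $\operatorname{dom}(p)$, so that $y\restriction B_n$ is free to take every value $r_j$ over $y \in [p]$, and that the forced value $\check{s}_{n,j}$ is not later overwritten. This is exactly the standard verification that a fusion sequence in Silver-type forcing converges to a legitimate condition below which all the stage-$n$ commitments hold; it is routine given the properness argument already carried out above, so I would present it briefly and refer back to that proof for the pattern. I expect no further complications, since $\omega^\omega$-bounding removes any issue about $\dot{x}$ ranging over an unbounded set.
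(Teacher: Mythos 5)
Your proof is correct and takes essentially the same approach as the paper: a fusion argument that at stage $n$ decides $\dot{x}\restriction n$ for each assignment of values to the $n$ free coordinates, followed by reading off the decision from the generic. The only difference from the paper is bookkeeping — you run a linear fusion along the $\le_n$ orderings while the paper organizes the same construction as a tree of conditions $p_s$ indexed by finite strings — and your coherence verification is the same consistency-of-forcing observation the paper leaves implicit.
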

\begin{proof}
Assume $q$ forces that $\dot{x}$ is a  $\mathbb{S}_{F^{\vec{I}}}$-name for a real.  By identifying each condition with the corresponding compact set we can find a fusion sequence $\{p_{s} :s\in  \bigcup_{n}F_{[0,n)}^{\vec{I}}\}$ such that for each $s\in F_{[0,n)}^{\vec{I}}$  (here $s$ just would be used as an index) $p_{s}\Vdash \dot{x}\upharpoonright_{[0,n)} = u_{s}$ for some $u_{s}\in F_{[0,n)}^{\vec{I}} $. Let
\begin{equation}
\nonumber p=\bigcap_{n\in\mathbb{N}}\bigcup_{s\in  F_{[0,n)}^{\vec{I}}} p_{s}
\end{equation}
 be the fusion. For each $y\in p$ let $b\in F^{\vec{I}}$ be the branch such that $y\in p_{b\upharpoonright _{[0,n)}}$ for each $n$. Define $f(y)\upharpoonright_{[0,n)}= u_{b\upharpoonright _{[0,n)}}$. $f$ is a continuous map and $y\in p_{b\upharpoonright _C}$ implies $ d(f(y), \dot{x})< 2^{-n}$.
 Therefore $p\Vdash f(\dot{r}_{gen})= \dot{x}$.
\end{proof}

The above lemma shows that $\mathbb{S}_{F^{\vec{I}}}$ satisfies the ,so called, \emph{continuous reading of names}. This can also be seen by noticing that
the groupwise Silver forcing can be viewed as a forcing with Borel $\mathcal{I}$-positive sets, $\mathbb{P}_{\mathcal{I}}= \mathcal{B}(\mathbb{R})/\mathcal{I}$, for a $\sigma$-ideal $\mathcal{I}$, where $\mathcal{I}$ is the $\sigma$-ideal $\sigma$-generated by partial functions with cofinite domains. These forcings are studied by J. Zapletal in \cite{ZapDes} and \cite{ZapId}.  Since groupwise Silver forcings (as well as the random forcing, which will be used in our iteration) are proper and conditions are compact sets, by a theorem of Zapletal \cite[Lemma 2.2.1 and Lemma 2.2.3]{ZapDes} the continuous reading of names is equivalent to the forcing being $\omega^{\omega}$-bounding.

\begin{lemma}[J. Zapletal]\label{zap}
Let $\mathcal{I}$ be a $\sigma$-ideal on a Polish space $X$ and $\mathbb{P}_{\mathcal{I}}$ is a proper forcing. Then following are equivalent.
\begin{enumerate}
\item $\mathbb{P}_{\mathcal{I}}$ is $\omega^{\omega}$-bounding.
\item Compact sets are dense in $\mathbb{P}_{\mathcal{I}}$ and $\mathbb{P}_{\mathcal{I}}$ has continuous reading of names.
\end{enumerate}
\end{lemma}
Note that it is essential that compact conditions are dense in the poset, since the Cohen forcing is proper and of the form $\mathbb{P}_{\mathcal{I}}$, but it is not $\omega^{\omega}$-bounding, yet it does have the continuous reading of names.

In Zapletal's theory the countable support iteration of forcings of the form $\mathbb{P}_{\mathcal{I}}$ has been studied for reasonably definable ideals called $iterable$ \cite[Definition 3.1.1]{ZapDes}. The following is a generalization of the classical \emph{Fubini product} of two ideals.
\begin{definition}
[J. Zapletal] For a countable ordinal $\alpha$ and $\sigma$-ideals $\{\mathcal{I}_{\xi}: \xi\in\alpha\}$ on the reals, the Fubini product, $\prod_{\xi\in\alpha}\mathcal{I}_{\xi}$, is the ideal
 on $\mathbb{R}^{\alpha}$ defined as the collection of all sets $A\subseteq \mathbb{R}^{\alpha}$ for which the player I has a winning strategy in the game $G(A)$ as follows: at stage $\beta\in \alpha$ player I plays a set $B_{\beta}\in \mathcal{I}_{\beta}$ and player II produces a real $r_{\beta}\in \mathbb{R}\setminus B_{\beta}$. Player II wins the game $G(A)$ if the sequence $\{r_{\beta}: \beta\in\alpha\}$ belongs to the set $A$.
\end{definition}
It is easy to see that $\prod_{\xi\in\alpha}\mathcal{I}_{\xi}$ is a $\sigma$-ideal on $\mathbb{R}^{\alpha}$ since player I can always combine countably many of his winning strategies into one. In the presence of large cardinals the game $G(A)$ is always determined for iterable ideals. However without large cardinals we need some additional definability assumptions on the ideals to guarantee that the game $G(A)$ is determined, see \cite{ZapDes} section 3.3.

Recall that for Polish spaces $X,Y$ and $A\subseteq X\times Y$, for any $x\in X$ the vertical section of $A$ at $x$ is the set $A_{x}=\{y\in Y: (x,y)\in A\}$.
\begin{definition}
A $\sigma$-ideal $\mathcal{I}$ on a polish space $X$ is $\Pi_{1}^{1}$ on $\Sigma_{1}^{1}$ if for every $\Sigma_{1}^{1}$ set $B\subseteq 2^{\mathbb{N}}\times X$ the set $\{x\in 2^{\mathbb{N}}: B_{x}\in \mathcal{I}\}$ is $\Pi_{1}^{1}$.
\end{definition}
 Some commonly used ideals fail to be $\Pi_{1}^{1}$ on $\Sigma_{1}^{1}$, e.g. a $\sigma$-ideal $\mathcal{I}$ for which the forcing $\mathbb{P}_{\mathcal{I}}$ is proper and adds a dominating real is not $\Pi_{1}^{1}$ on $\Sigma_{1}^{1}$. However the $\sigma$-ideals corresponding to the Silver forcing, random forcing and many other natural forcings are $\Pi_{1}^{1}$ on $\Sigma_{1}^{1}$ . In fact
 for a $\sigma$-ideal $\mathcal{I}$ on $\mathbb{R}$ if the poset $\mathbb{P}_{\mathcal{I}}$ consists of compact sets and is Suslin, proper and $\omega^{\omega}$-bounding, then $\mathcal{I}$ is  $\Pi_{1}^{1}$ on $\Sigma_{1}^{1}$ (see \cite{ZapDes}, Appendix C).

The following is due to V. Kanovei and J. Zapletal.
\begin{theorem}\label{ZapKan}
Suppose $\alpha$ is a countable ordinal and $\{\mathcal{I}_{\xi}: \xi < \alpha\}$ is a sequence of $\Pi_{1}^{1}$ on $\Sigma_{1}^{1}$ $\sigma$-ideals on the reals. Then the poset $\mathcal{B}(\mathbb{R}^{\alpha})/\prod_{\xi\in\alpha}\mathcal{I}_{\xi}$ is forcing equivalent to the countable support iteration of the ground model forcings  $\{\mathbb{P}_{\mathcal{I}_{\xi}},~\xi\leq\alpha\}$ of length $\alpha$.
\end{theorem}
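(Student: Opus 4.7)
The plan is to produce a natural dense embedding $p \mapsto [p]$ from the countable support iteration $\mathbb{P}_\alpha := \lim_{\xi<\alpha} \mathbb{P}_{\mathcal{I}_\xi}$ into the quotient algebra $\mathbb{Q} := \mathcal{B}(\mathbb{R}^\alpha)/\prod_{\xi<\alpha}\mathcal{I}_\xi$. Given a condition $p \in \mathbb{P}_\alpha$, I would define $[p]$ to consist of those sequences $\bar r = \langle r_\xi : \xi < \alpha\rangle \in \mathbb{R}^\alpha$ for which, by recursion on $\xi$, the evaluation $B_\xi := p(\xi)[\bar r \upharpoonright \xi]$ is a Borel $\mathcal{I}_\xi$-positive set with $r_\xi \in B_\xi$. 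The $\Pi^1_1$-on-$\Sigma^1_1$ hypothesis is precisely what is needed to ensure that the predicate ``$B_\xi$ is $\mathcal{I}_\xi$-positive'' is $\Sigma^1_1$ in the Borel code of $B_\xi$, so that the recursive definition unfolds into a Borel subset of $\mathbb{R}^\alpha$ (the countable length of the recursion keeping us inside the Borel hierarchy).

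To see that $[p] \notin \prod_{\xi<\alpha} \mathcal{I}_\xi$, I would exhibit a winning strategy for Player II in $G(\mathbb{R}^\alpha \setminus [p])$: at stage $\xi$, with $\bar r \upharpoonright \xi \in [p\upharpoonright\xi]$ already produced and Player I having just played some $C_\xi \in \mathcal{I}_\xi$, Player II picks any $r_\xi \in B_\xi \setminus C_\xi$, which exists because $B_\xi$ is $\mathcal{I}_\xi$-positive while $C_\xi$ is small. Order preservation ($p \leq q$ implies $[p] \subseteq [q]$) is immediate from unraveling the inductive definition, and incompatibility in $\mathbb{P}_\alpha$ translates to $[p]\cap[q] \in \prod_\xi \mathcal{I}_\xi$ by pasting Player I's winning strategies in the individual factor games. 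The crucial content is then density, which I would prove by induction on $\alpha$: given any Borel positive $A \in \mathbb{Q}$, invoke determinacy of $G(\mathbb{R}^\alpha \setminus A)$ to extract a Player II winning strategy $\sigma$ and convert it into an iteration condition $p_\sigma$ by reading off, at each coordinate, the Borel set of replies that $\sigma$ permits after the previous initial segment has been fixed by the intermediate generic.

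The main obstacle is the bookkeeping required to carry out the inductive step: one must verify that the $\mathbb{P}_\xi$-name for $p_\sigma(\xi)$ extracted from $\sigma$ really names a Borel $\mathcal{I}_\xi$-positive set, uniformly in the intermediate generic. This relies on two uses of the $\Pi^1_1$-on-$\Sigma^1_1$ hypothesis. First, it supplies the continuous/Borel reading of names inside each factor $\mathbb{P}_{\mathcal{I}_\xi}$ (in the spirit of the discussion around Lemma \ref{zap}), so that $\sigma$'s responses can be uniformized into a Borel code. Second, one needs to know that the property ``$\Pi^1_1$-on-$\Sigma^1_1$'' is preserved under Fubini products of countable length, so that the inductive hypothesis ``$\mathbb{P}_\xi \cong \mathcal{B}(\mathbb{R}^\xi)/\prod_{\eta<\xi}\mathcal{I}_\eta$'' may be promoted to stage $\xi+1$ and at limit stages. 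Establishing the preservation of the definability class along the recursion, and the concomitant determinacy of the relevant product games (so that ``positive'' coincides with ``Player II has a winning strategy''), is the technical heart of the argument; once it is in place the verification that $p \mapsto [p]$ is an order-preserving, incompatibility-preserving, dense map between the two posets becomes routine, and forcing equivalence follows.
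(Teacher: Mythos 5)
Your proposal reconstructs Zapletal's argument from \cite[Lemma 3.3.1, Corollary 3.3.2]{ZapDes}, which is exactly what the paper points to---its proof of this theorem consists of nothing beyond that citation---so you have the right approach, and you correctly identify the two ingredients (a dense embedding $p\mapsto[p]$ built by transfinite recursion, plus game determinacy to get density) that carry the argument. Two points to tighten. First, with the paper's convention that Player~II wins $G(A)$ by making the run land \emph{in} $A$, the games you want are $G([p])$ and $G(A)$, not $G(\mathbb{R}^{\alpha}\setminus[p])$ and $G(\mathbb{R}^{\alpha}\setminus A)$; the strategy you actually describe (Player~II choosing $r_{\xi}\in B_{\xi}\setminus C_{\xi}$) is exactly what keeps the run inside $[p]$, so this is a notational inversion rather than a conceptual error, but it should be corrected. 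Second, determinacy of $G(A)$ for Borel $A\subseteq\mathbb{R}^{\alpha}$ does not follow from Martin's Borel determinacy, since at each stage Player~I's moves range over an uncountable collection of Borel members of $\mathcal{I}_{\xi}$; Zapletal obtains determinacy, together with preservation of the $\Pi_{1}^{1}$ on $\Sigma_{1}^{1}$ property under countable Fubini products, from the definability hypothesis in \cite[\S 3.3]{ZapDes}---you flag both of these as the technical core, which is accurate, and the plan is sound once they are supplied.
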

\begin{proof}
The proof is similar to \cite[Lemma 3.3.1, corollary 3.3.2]{ZapDes}, where it is stated for the case $\mathcal{I}_{\xi}= \mathcal{I}$ for all $\xi<\alpha$.
\end{proof}

 We will occasionally  use the following property of countable support forcing iterations, which was defined in \cite{FaSh}, to prove our main theorem.
\begin{definition}
 Assume $\mathbb{P}_{\kappa}=\{\mathbb{P}_{\xi}, \dot{\mathbb{Q}}_{\eta}:\xi\leq \kappa , \eta< \kappa\}$ is a countable support forcing iteration such that each $\dot{\mathbb{Q}}_{\eta}$ is a $\mathbb{P}_{\eta}$-name for a ground-model forcing notion which adds a generic real $\dot{g}_{\eta}$. We say $\mathbb{P}_{\kappa}$ has continuous reading of names if for every $\mathbb{P}_{\kappa}$-name $\dot{x}$ for a new real, the set of conditions $p$ such that there exist a countable $S\subset \kappa$, a compact $K\subset \mathbb{R}^{S}$, and a continuous $h: K\rightarrow \mathbb{R}$ such that
\begin{equation}
\nonumber p\Vdash "\langle\dot{g}_{\xi}: ~\xi\in S\rangle\in K ~and~ \dot{x}= h(\langle\dot{g}_{\xi} :~ \xi\in S\rangle)"
\end{equation}
is dense.
\end{definition}
\begin{proposition}\label{111}
If $\mathbb{P}_{\kappa}=\{\mathbb{P}_{\xi}, \dot{\mathbb{Q}}_{\eta}:\xi\leq \kappa , \eta< \kappa\}$ is a countable support iteration of ground model forcings such that each $\dot{\mathbb{Q}}_{\eta}$ is a $\mathbb{P}_{\eta}$-name for a Suslin, proper and $\omega^{\omega}$-bounding partial order of the form $\mathbb{P}_{\mathcal{I}}$ such that compact conditions form a dense subset, then $\mathbb{P}_{\kappa}$ has the continuous reading of names.
\end{proposition}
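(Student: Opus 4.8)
The strategy is to combine Theorem~\ref{ZapKan} with Lemma~\ref{zap} and then invoke the continuous reading of names for a single iterand together with a reflection/fusion argument along the countable support iteration. First I would observe that, by hypothesis, each $\dot{\mathbb{Q}}_{\eta}$ is (a name for) a ground-model Suslin, proper, $\omega^{\omega}$-bounding forcing of the form $\mathbb{P}_{\mathcal{I}_{\eta}}$ in which compact conditions are dense; by the remark following Lemma~\ref{zap} (the appeal to Zapletal's Appendix~C) each such $\mathcal{I}_{\eta}$ is $\Pi^1_1$ on $\Sigma^1_1$. Hence Theorem~\ref{ZapKan} applies and $\mathbb{P}_{\kappa}$ is forcing equivalent to $\mathcal{B}(\mathbb{R}^{\kappa})/\prod_{\xi<\kappa}\mathcal{I}_{\xi}$ (after passing to a countable ordinal $\kappa$, or working below a condition that is supported on a countable set, which suffices since every name for a real is decided by countably many coordinates by properness and $\omega^{\omega}$-bounding). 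Under this identification a condition is (up to the ideal) a Borel $\prod_\xi \mathcal{I}_\xi$-positive subset of $\mathbb{R}^{\kappa}$, and the generic reals $\dot g_\xi$ are exactly the coordinate projections of the generic point.

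The core step is then to run Zapletal's ``continuous reading of names'' for the Fubini-product forcing: given a $\mathbb{P}_{\kappa}$-name $\dot x$ for a new real and a condition $B$, I want to find a stronger condition $B'\subseteq B$, a countable set $S\subseteq\kappa$, a compact $K\subseteq \mathbb{R}^{S}$ and a continuous $h\colon K\to\mathbb{R}$ such that $B'\Vdash \langle \dot g_\xi:\xi\in S\rangle\in K$ and $\dot x = h(\langle\dot g_\xi:\xi\in S\rangle)$. One first uses properness plus $\omega^\omega$-bounding of the whole iteration (which follows from $\omega^\omega$-bounding of each iterand, a standard preservation theorem for countable support iterations of proper $\omega^\omega$-bounding forcings) to shrink $B$ so that $\dot x$ depends only on coordinates in a fixed countable $S$; then one replaces $B$ by a compact subcondition $K$ — legitimate because compact conditions are dense in each $\mathbb{P}_{\mathcal{I}_\xi}$ and, by the analysis of the Fubini product, compact conditions are dense in $\mathcal{B}(\mathbb{R}^{S})/\prod_{\xi\in S}\mathcal{I}_\xi$ as well. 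Finally, on the compact condition $K$ one builds the continuous function $h$ by the usual fusion argument: recursively refine $K$ along a tree of finite approximations to the generic sequence $\langle\dot g_\xi:\xi\in S\rangle$ so that each node decides a longer and longer initial segment of $\dot x$; this is exactly the argument of Lemma~\ref{100}, now carried out simultaneously in the $|S|$-many coordinates, and it is available precisely because Lemma~\ref{zap} gives continuous reading of names for each single $\mathbb{P}_{\mathcal{I}_\xi}$ restricted to compact conditions.

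I expect the main obstacle to be bookkeeping the fusion through the iteration rather than any single conceptual difficulty: one must organize a ``diagonal'' fusion over the countably many coordinates in $S$ so that the resulting $K$ is genuinely a condition (i.e.\ still $\prod_{\xi\in S}\mathcal{I}_\xi$-positive) and so that the limit function $h$ is continuous and total on $K$. The cleanest way to handle this is to cite Theorem~\ref{ZapKan} to reduce to the single poset $\mathcal{B}(\mathbb{R}^{S})/\prod_{\xi\in S}\mathcal{I}_\xi$ and then apply the compact-conditions-are-dense plus $\omega^\omega$-bounding half of Lemma~\ref{zap} directly to \emph{that} poset — since the hypotheses (properness, $\omega^\omega$-bounding, compact conditions dense) are inherited by the Fubini product of $\Pi^1_1$-on-$\Sigma^1_1$ $\sigma$-ideals — so that the continuous reading of names for $\mathbb{P}_{\kappa}$ becomes a corollary of Lemma~\ref{zap} applied to the product ideal, with no new fusion to perform by hand. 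The one point requiring care is verifying that the Fubini product $\prod_{\xi\in S}\mathcal{I}_\xi$ is again $\Pi^1_1$ on $\Sigma^1_1$ and that its poset consists of (essentially) compact sets; this is implicit in Zapletal's framework for iterable ideals and I would cite \cite[\S 3.3]{ZapDes} for it.
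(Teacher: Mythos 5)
Your proposal is correct, and the ``cleanest way'' you settle on in your final paragraph — reducing via Theorem~\ref{ZapKan} to the poset $\mathcal{B}(\mathbb{R}^{S})/\prod_{\xi\in S}\mathcal{I}_{\xi}$ for the countable support $S$ and then invoking Lemma~\ref{zap} directly on that poset — is exactly the paper's argument. The subtleties you flag (that the Fubini product ideal is again $\Pi^1_1$ on $\Sigma^1_1$, that compact conditions remain dense, and that $\dot x$ may be taken to depend on countably many coordinates) are elided in the paper's one-paragraph proof but are, as you say, part of Zapletal's framework rather than new work to do here.
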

\begin{proof}
Suppose $\mathbb{Q}_{\xi}= \mathbb{P}_{\mathcal{I}_{\xi}}$ and $p\in \mathbb{P}_{\kappa}$ forces that $\dot{x}$ a $\mathbb{P}_{\kappa}$-name for a real. Let $S$ be the support of $p$ and $\mathcal{I}=\prod_{\xi\in S} \mathcal{I}_{\xi}$. If $\mathbb{P}_{\mathcal{I}}= \mathcal{B}(\mathbb{R}^{S})/ \mathcal{I}$, by theorem \ref{ZapKan} we can assume that $\dot{x}$ is a $\mathbb{P}_{\mathcal{I}}$-name and since $\mathbb{P}_{\mathcal{I}}$ is proper, $\omega^{\omega}$-bounding and compact conditions form a dense subset, by Zapletal's characterization of continuous reading of names for these posets, lemma \ref{zap},  there are a compact condition $q\leq p$ and a continuonus function $h: q\mapsto \mathbb{R}$ such that $q\Vdash   h(\langle\dot{g}_{\xi} :~ \xi\in S\rangle)=\dot{x}$.
\end{proof}

Let $\mathcal{I}$ be a $\sigma$-ideal and $\mathcal{M}$ be an elementary submodel of some large enough structure containing $\mathcal{I}$.  A real $x$ is called $\mathcal{M}$-generic if the set $\{B\in \mathbb{P}_{\mathcal{I}}\cap \mathcal{M}: x\in B\}$ is an $\mathcal{M}$-generic filter on $\mathbb{P}_{\mathcal{I}}$.  The poset $\mathbb{P}_{\mathcal{I}}$ is proper if and only if for every such $\mathcal{M}$ and every $\mathcal{I}$-positive set $B\in \mathbb{P}_{\mathcal{I}}\cap\mathcal{M}$ the set $\{x\in B : \text{$x$ is $\mathcal{M}$-generic}\}$ is $\mathcal{I}$-positive \cite[Lemma 2.1.2]{ZapDes}.

The forcing used in this paper is a countable support iteration of the groupwise Silver forcings and the random forcing. Let $\mathbb{P}_{\kappa}=\{\mathbb{P}_{\xi}, \dot{\mathbb{Q}}_{\eta}:\xi\leq \kappa , \eta< \kappa\}$ be such a forcing of length $\kappa$. In lemma \ref{999} below, we will show that assuming \textbf{MA} in the ground model, any $\Sigma^{1}_{2}$ set in the generic extension by $\mathbb{P}_{\kappa}$ can be uniformized by a Baire-measurable map in the ground model. In order to prove this we first need the following lemma. It is proved in \cite{ZapId} but we include the proof here for the convenience of the reader.
\begin{lemma}\label{126}
 Suppose $\mathcal{I}$ is a $\sigma$-ideal on a Polish space $X$ such that $\mathbb{P}_{\mathcal{I}}$ is proper. Let $Y$ be a Polish space and $p\in \mathbb{P}_{\mathcal{I}}$ forces that $\dot{B}$ is a Borel subset of $Y$. Then there is a Borel $\mathcal{I}$-positive condition $q\leq p$ and a ground model Borel set $D\subseteq q\times Y$ such that $q\Vdash \dot{D}_{\dot{r}_{gen}}=\dot{B}$.
\end{lemma}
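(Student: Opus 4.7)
The plan is to represent $\dot{B}$ through its Borel code, convert the code-name into a Borel function of the generic real using properness, and then pull the result back through a universal Borel set to obtain $D$.

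First, I would fix once and for all a Polish space $BC$ of Borel codes for Borel subsets of $Y$, together with a universal Borel set $U \subseteq BC \times Y$ whose vertical section $U_c$ is the Borel set coded by $c \in BC$. Since $p$ forces $\dot{B}$ to be Borel, a standard absoluteness argument allows us to replace $\dot{B}$ by a $\mathbb{P}_{\mathcal{I}}$-name $\dot{c}$ for an element of $BC$ such that $p \Vdash \dot{B} = U_{\dot{c}}$.

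Next, I would apply the Borel reading of names for proper $\mathbb{P}_{\mathcal{I}}$-forcings: there exist a Borel $\mathcal{I}$-positive condition $q \leq p$ and a ground model Borel function $f : q \to BC$ with $q \Vdash f(\dot{r}_{gen}) = \dot{c}$. The argument uses properness in its ``generic real'' form (as cited just above the lemma in the paper): fix a countable elementary submodel $\mathcal{M} \prec H_{\theta}$ containing $\mathcal{I}$, $p$ and $\dot{c}$. The set of $\mathcal{M}$-generic reals in $p$ is then $\mathcal{I}$-positive and Borel, being the intersection of countably many Borel sets corresponding to the dense sets of $\mathbb{P}_{\mathcal{I}} \cap \mathcal{M}$; moreover, for each such $r$ the $\mathcal{M}$-generic filter determined by $r$ is a definable function of $r$, so the map $r \mapsto \dot{c}[r]$ is Borel on this set. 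Restricting this map to a Borel $\mathcal{I}$-positive $q$ provides the desired $f$.

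Finally, I would set $D := \{(x,y) \in q \times Y : (f(x), y) \in U\}$, which is Borel as the preimage of $U$ under the Borel map $(x,y) \mapsto (f(x), y)$. For any $q$-generic real $r$, the vertical section satisfies $D_r = U_{f(r)} = U_{\dot{c}[r]} = \dot{B}[r]$, so $q \Vdash \dot{D}_{\dot{r}_{gen}} = \dot{B}$. The main obstacle is the middle step: one must carefully confirm that $\mathcal{M}$-genericity is a Borel condition on reals and that the interpretation map is Borel in the generic parameter, which is precisely where properness (in the form of the characterization via $\mathcal{M}$-generic reals quoted in the paper) is used.
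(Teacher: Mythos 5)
Your proof route is genuinely different from the paper's. The paper proceeds by induction on the Borel rank of $\dot{B}$, handling the closed/open base case by reading off the set of basic open sets disjoint from $\dot{B}$ via a Borel function on the $\mathcal{M}$-generic reals, and handling countable unions and intersections via ground-model antichains. You instead reduce the whole problem to a single application of Borel reading of names for a \emph{real} (a Borel code), which is conceptually cleaner.

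There is, however, a genuine gap in your first step. There is no Borel universal set for the class of all Borel subsets of $Y$: the pointclass of Borel sets does not admit a Borel parametrization (any such $U$ would let you diagonalize a Borel set out of itself). The relation ``$y$ belongs to the set coded by $c$'' is $\Sigma^1_1$ and $\Pi^1_1$ on the (itself $\Pi^1_1$-complete) set of well-formed codes, but it is not Borel globally. Consequently, as written, the set $D = \{(x,y): (f(x),y)\in U\}$ would not be Borel, which defeats the conclusion of the lemma.

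The repair is exactly the step the paper inserts at the start of its own proof: since $\mathbb{P}_{\mathcal{I}}$ is proper it preserves $\aleph_1$, so by strengthening $p$ one may fix a countable ordinal $\alpha$ and assume $p\Vdash \dot{B}\in\Sigma^0_\alpha$. For each fixed $\alpha<\omega_1$ there \emph{is} a Borel universal set $U^\alpha\subseteq 2^\omega\times Y$ for $\Sigma^0_\alpha$, so your argument goes through verbatim with $U$ replaced by $U^\alpha$ and $\dot c$ taken to be a name for a code of rank $\leq\alpha$. (Alternatively one can invoke $\Sigma^1_1$-boundedness to argue that the range of the ground-model Borel function $f$ automatically consists of codes of bounded rank, but that requires first knowing $f$ lands in the set of well-formed codes, and it is cleaner to just bound the rank in advance.) With this amendment your argument is correct, and the remaining steps---$\mathcal{M}$-genericity being a Borel condition, the interpretation map $r\mapsto\dot{c}[r]$ being Borel on the $\mathcal{M}$-generics, and the definition of $D$ as a preimage---are sound.
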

\begin{proof}
The proof is carried out by induction on the Borel rank of $\dot{B}$. Since the forcing $\mathbb{P}_{\mathcal{I}}$ preserves $\aleph_{1}$ by possibly strengthening the condition $p$ we may assume that the Borel rank of $\dot{B}$ is forced to be $\leq \alpha$ for a fixed countable ordinal $\alpha$.
Let $\mathcal{M}$ be a countable elementary submodel of a large enough structure.

 Assume $\dot{B}$ is forced to be a closed set. Fix a countable base $\mathcal{O}$ for the topology of the space $Y$. Since $\mathbb{P}_{\mathcal{I}}$ is proper we can find \cite{ZapDes} a Borel $I$-positive set $q\leq p$ (in fact $q$ is the set of all $\mathcal{M}$-generic reals in $p$) and a ground model Borel function $f: q\mapsto \mathcal{P}(\mathcal{O})$ such that $q\Vdash \check{f}(\dot{r}_{gen})=\{O\in \mathcal{O}: \dot{B}\cap O=\emptyset\}$. Define $D=\{(x,y)\in q\times Y : y\notin \bigcup f(x)\}$. It is easy to check that $D$ is the required Borel set. The proof for open sets is similar.

Now suppose $p$ forces that $\dot{B}=\bigcup_{n}\dot{B}_{n}$ where $\dot{B}_{n}$'s are sets of lower Borel rank. Let $q=\{x\in p : x \text{ is $\mathcal{M}$-generic}\}$. Using the inductive assumption for each $n\in \mathbb{N}$ find a maximal antichain $A(n)\subset \mathbb{P}_{\mathcal{I}}$ below $p$, such that for every condition $s\in A(n)$ there is a Borel set $D(s,n)\subset s\times Y$ such that $s\Vdash \dot{D}(s,n)_{\dot{r}_{gen}}=\dot{B}(n)$. For every $n\in \mathbb{N}$ let $D(n)=\bigcup\{D(s,n) : s\in \mathcal{M}\cap A(n)\}\cap q\times Y\subset q\times Y$. The condition $q$ forces that the generic real $\dot{r}_{gen}$ belongs to exactly one condition in the antichain $\mathcal{M}\cap A(n)$ for every $n$. Therefore $\dot{B}(n)=\bigcup\{\check{D}(s,n) : s\in \mathcal{M}\cap A(n)\}_{\dot{r}_{gen}}= \dot{D}(n)_{\dot{r}_{gen}}$. Now the set $D=\bigcup_{n} D(n)$ is clearly a Borel subset of $q\times Y$ and $q$ forces that $\dot{B}= \dot{D}_{\dot{r}_{gen}}$.

The countable intersection case is a similar argument.
\end{proof}
The following lemma can be ignored in proving theorem \ref{1} since it immediately follows from the large assumption. Nevertheless it implies that in order to get local triviality of isomorphisms or even *-homomorphisms of FDD-algebras, corollary \ref{local}, no large cardinal assumption is necessary.
\begin{lemma}\label{999}
Assume \textbf{MA} holds in the ground model and $\mathbb{P}_{\kappa}$ is a countable support iteration of length $\kappa$ of proper forcings of the form $\mathbb{P}_{\mathcal{I}}$ with compact conditions. If $\dot{C}$ is a $\mathbb{P}_{\kappa}$-name for a  $\Sigma_{2}^{1}$ subset of $\mathbb{R}\times \mathbb{R}$ in the extension such that for every $\dot{x}\in \mathbb{R}$ the vertical section $\dot{C}_{\dot{x}}$ is non-empty, then there are $q\in\mathbb{P}_{\kappa}$ and a Baire-measurable map $h:\mathbb{R}\mapsto \mathbb{R}$ such that for every  $\mathbb{P}_{\kappa}$-name $\dot{x}$ for a real
\begin{equation}
\nonumber q\Vdash (\dot{x},\check{h}(\dot{x}))\in \dot{C}.
\end{equation}
\end{lemma}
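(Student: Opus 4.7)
My plan is to reduce the question to a Baire-measurable uniformization problem for a $\Sigma^1_2$ set in the ground model, which can then be solved using \textbf{MA}. First I would apply Proposition \ref{111} and Theorem \ref{ZapKan} to replace $\mathbb{P}_\kappa$ by a single forcing of the form $\mathbb{P}_{\mathcal{I}}$ on a Polish space with compact conditions dense. Since any $\Sigma^1_2$ subset of $\mathbb{R}\times\mathbb{R}$ is definable from a single real parameter, continuous reading of names (applied to that parameter) lets me pass to a compact condition $q_0$ below which
\[
\dot C = \{(x,y) : \phi(x,y,\dot r_{\mathrm{gen}})\},
\]
where $\phi$ is a fixed $\Sigma^1_2$ formula in $V$ and $\dot r_{\mathrm{gen}}$ is the canonical name for the generic real in the reduced $\mathbb{P}_{\mathcal{I}}$.

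Next I would introduce in $V$ the $\Sigma^1_2$ set
\[
\tilde C = \{(r,x,y)\in q_0\times\mathbb{R}^2 : \phi(x,y,r)\}.
\]
Under \textbf{MA} every $\Sigma^1_2$ set has the Baire property; more strongly, every $\Sigma^1_2$ set with covering projection admits a Baire-measurable uniformization of Kuratowski--Jankov--von Neumann type. The hypothesis that $\dot C$ has all vertical sections non-empty, together with Shoenfield absoluteness applied inside each $V[g]$, translates to: for $\mathcal{I}$-positively many $g\leq q_0$ and every $x\in V[g]$, the slice $\tilde C_{g,x}$ is non-empty. Applying the uniformization theorem to $\tilde C$ yields a Baire-measurable $H:q_0\times\mathbb{R}\to\mathbb{R}$ with $(g,x,H(g,x))\in\tilde C$ on a Baire-comeager subset of $q_0\times\mathbb{R}$.

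The heart of the proof, and the main obstacle, is to pass from $H(r,x)$ to a single ground-model Baire-measurable $h:\mathbb{R}\to\mathbb{R}$ together with a strengthening $q\leq q_0$ such that $q\Vdash(\dot x,\check h(\dot x))\in\dot C$ for every name $\dot x$. I would proceed by a Kuratowski--Ulam/Fubini argument: identify a Baire-comeager set $K'\subseteq q_0$ of $g$'s for which the $x$-failure set of $H(g,\cdot)$ is meager, and within that pick an $\mathcal{I}$-positive compact $q\leq q_0$ along which the functions $H(g,\cdot)$ all coincide, on a comeager set of $x$, with one ground-model Baire-measurable $h$. Properness and the $\omega^{\omega}$-bounding property let me bring the relevant dense sets into a countable elementary submodel and thereby make the choice of $h$ uniform; Shoenfield absoluteness applied inside each $V[g]$ then upgrades the ground-model coincidence from $V$-reals to all reals of $V[g]$, giving the required forcing statement. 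The delicate point, and the reason \textbf{MA} in $V$ (rather than a weaker regularity assumption on $\Sigma^1_2$ sets) is used, is precisely the need to align the Baire-category structure on which the uniformization theorem rests with the $\mathcal{I}$-structure of the forcing so that the resulting $h$ works simultaneously at every $\mathcal{I}$-generic below $q$.
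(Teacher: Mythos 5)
Your overall framing is the same as the paper's: restrict to a single $\mathbb{P}_{\mathcal{I}}$-factor coming from the (countable) support of the condition (Theorem~\ref{ZapKan}, Proposition~\ref{111}), pull the name for $\dot C$ back to a ground-model object definable from the generic real via continuous reading of names (the paper uses Lemma~\ref{126} for this), and then apply a uniformization result together with \textbf{MA}~$\Rightarrow$~every $\Sigma^1_2$ set has the Baire property. Your appeal to ``a Baire-measurable uniformization of Kuratowski--Jankov--von~Neumann type'' for $\Sigma^1_2$ sets is really the same move as the paper's two-step argument --- write the $\Sigma^1_2$ set as a projection of a $\Pi^1_1$ set, uniformize that by Kond\^o's theorem, and then use \textbf{MA} to conclude the $\Pi^1_1$ uniformizing function is Baire-measurable --- merely black-boxed; you should unpack it as the paper does, since JvN in the usual sense is an analytic-set theorem, not a $\Sigma^1_2$ theorem, and the Kond\^o route is where the content lies.

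The genuine problem is your last paragraph. You want to use Kuratowski--Ulam to find an $\mathcal{I}$-positive $q\leq q_0$ along which the maps $H(g,\cdot)$ ``all coincide, on a comeager set of $x$, with one ground-model Baire-measurable $h$.'' There is no reason for the uniformizers $H(g,\cdot)$ at different fibers $g$ to agree anywhere, and Kuratowski--Ulam gives you no leverage toward that: it controls the category of a \emph{set} in a product, not the constancy of the fiber maps of a two-variable selector. The paper never tries to produce a single ground-model $h(y)$; its $h$ is obtained by evaluating the ground-model Baire-measurable two-variable map $g$ (the Kond\^o uniformizer of the Borel set $D$ produced by Lemma~\ref{126}) at the generic real in one coordinate, i.e.\ the conclusion really is $q\Vdash(\dot y,\check g(\dot r_{\mathrm{gen}},\dot y))\in\dot C$ for a \emph{ground-model} $g$ of two variables. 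Read this way, the ``heart of the proof'' obstacle you describe simply does not arise, and that is how the lemma is used in \S 6. So the right repair is to drop your Kuratowski--Ulam/``aligning'' step entirely and keep the generic parameter explicit, not to attempt to collapse the family $\{H(g,\cdot)\}_g$ to one function. (Relatedly, \textbf{MA} is used in the paper exactly once, to get the Baire property of $\Sigma^1_2$ sets; there is no separate ``alignment'' demand on \textbf{MA} of the sort your last sentence suggests.)
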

\begin{proof}
Let $\mathcal{I}_{\xi}$ be the $\sigma$-ideal associated with $\dot{\mathbb{Q}}_{\xi}$.
Since $\Sigma_{2}^{1}$ sets are projections of $\Pi_{1}^{1}$ sets and \textbf{MA} implies that all $\Sigma^1_{2}$ sets have the property of Baire, it is enough to uniformize $\Pi_{1}^{1}$ sets. Assume some $p\in \mathbb{P}_{\kappa}$ forces that $\dot{C}$ is a $\Pi_{1}^{1}$ subset of $\mathbb{R}\times \mathbb{R}$. There is a $\mathbb{P}_{\kappa}$-name $\dot{B}$ for a Borel subset of $\mathbb{R}^{3}$ such that $p\Vdash \mathbb{R}^{2} - pr_{\{1,2\}}(\dot{B})=\dot{C}$ where  $pr_{\{1,2\}}$ is the projection on the first and second coordinates of $\mathbb{R}^{3}$. Let the countable set $S\subset \kappa$ denote the  support of $p$ and

\begin{equation}
\nonumber \mathbb{P}_{S}=\{\mathbb{P}_{\xi}, \dot{\mathbb{Q}}_{\eta}:\xi\in S , \eta\in S\}
\end{equation}

and let $\mathcal{I}^{S}=\prod_{\xi\in S} \mathcal{I}_{\xi}$. Since these forcings are proper Suslin and $\omega^{\omega}$-bounding \cite[Lemma 4.3]{FaSh} we have  $p\Vdash_{\mathbb{P}_{S}} \mathbb{R}^{2} - pr_{\{1,2\}}(\dot{B})=\dot{C}$.

 Let $\alpha$ be the order-type of $S$. By forcing equivalence of $\mathbb{P}_{S}$ and $\mathbb{P}_{\mathcal{I}^{S}}=\mathcal{B}(\mathbb{R}^{S})/\mathcal{I}^{S}$ and for simplicity assume $p\in \mathbb{P}_{\mathcal{I}^{S}}$. Since $\mathbb{P}_{\mathcal{I}^{S}}$ is proper, by lemma \ref{126}, there is a ground model Borel set $D\subseteq \mathbb{R}^{\alpha}\times \mathbb{R}^{3}$
 and $q\leq p$ such that $q\Vdash \dot{B}=\dot{D}_{\dot{r}_{gen}}$ where $\dot{r}_{gen}$ is the canonical $\mathbb{P}_{\mathcal{I}^{S}}$-name for the generic real in $\mathbb{R}^{\alpha}$. Therefore

 \begin{equation}
  q\Vdash  \mathbb{R}^{2} - pr_{\{\alpha+1,\alpha+2\}}(\dot{D}_{\dot{r}_{gen}})=\dot{C}.
 \end{equation}

Now since the set $E=\mathbb{R}^{\alpha+2} -  pr_{\{1,\dots,\alpha+2\}}(D)$ is $\Pi_{1}^{1}$, by Kond\^{o}'s uniformization theorem, $E$ has a $\Pi_{1}^{1}$ and hence a  Baire-measurable uniformization $g: pr_{\{1,\dots,\alpha+1\}}(E)\mapsto \mathbb{R}$.


Let $\mathcal{M}$ be an elementary submodel of some large enough structure containing $\mathcal{I}^{S}$ and $\mathbb{P}_{\kappa}$, and also let $t=\{x\in q : \text{$x$ is $\mathcal{M}$-generic}\}$. Since $\mathbb{P}_{\mathcal{I}^{S}}$ is proper, $t$ is a condition in $\mathbb{P}_{\mathcal{I}^{S}}$.
Fix $x\in t$ and note that since the sections of $\dot{C}$ are non-empty, for every $y\in \mathbb{R}$  we have

\begin{equation}
\nonumber [pr_{\{\alpha+1,\alpha+2\}}(\dot{D}_{x})]_{y}=\dot{C}_{y}\neq \emptyset
\end{equation}

Therefore  $t\times \mathbb{R}\subseteq dom(g)$. For every $x\in t$ and $y\in \mathbb{R}$ we have  $(x, y, g(x,y))\in E$.

 Define the function $h: \mathbb{R}\mapsto \mathbb{R}$ by
\begin{equation}
\nonumber h(y)=g(\dot{r}_{gen},y)
\end{equation}

By above and (1) we have $t\Vdash (\dot{y},\check{h}(\dot{y}))\in \dot{C}$.
\end{proof}

\section{topologically trivial automorphisms of analytic p-ideal quotients of fdd-algebras}

In this section we study the automorphisms of quotients of FDD-algebras over ideals associated with analytic P-ideals with  Baire-measurable representations. Our result resembles the fact that for an analytic P-ideal $\mathcal{J}$ any automorphism of $P(\mathbb{N})/\mathcal{J}$ with a Baire-measurable representation has an asymptotically additive representation (see \cite{FaAn}, \S 1.5).
\begin{definition}
A map $\mu : \mathcal{P}(\mathbb{N})\rightarrow [0,\infty]$ is a submeasure supported by $\mathbb{N}$ if for $A, B\subseteq \mathbb{N}$

\begin{eqnarray}
 \nonumber &\mu(\emptyset)=0\\
 \nonumber &\mu(A)\leq \mu(A\cup B)\leq\mu(A)+\mu(B).
\end{eqnarray}
It is lower semicontinuous if for all $A\subseteq \mathbb{N}$ we have

\begin{equation}
\nonumber \mu(A)= \lim_{n\rightarrow\infty}\mu(A\cap [1,n]).
\end{equation}
\end{definition}
For a lower semicontinuous submeasure $\mu$ let

\begin{equation}
\nonumber Exh(\mu)=\{A\subseteq\mathbb{N}~:~\lim_{n}\mu(A\setminus [1,n])=0\}.
\end{equation}

This is an $F_{\sigma\delta}$ P-ideal on $\mathbb{N}$ (see \cite{FaAn}) and by Solecki's theorem \cite{Sol} every analytic P-ideal is of the form $Exh(\mu)$ for some  lower semicontinuous submeasure $\mu$.

For the rest of this section let $\mathcal{J}=Exh(\mu)$ be an analytic P-ideal on $\mathbb{N}$ for a lower semicontinuous submeasure $\mu$, containing all finite sets ($\mathcal{F}in\subseteq \mathcal{J}$). For each $a\in\mathcal{D}[\vec{E}]$ define $supp(a)\subseteq\mathbb{N}$ by
\begin{equation}\nonumber
 supp(a)=\{n\in\mathbb{N}: P_{n}a\neq 0\}
\end{equation}
 and in order to make notations simpler let $\hat{\mu}: \mathcal{D}[\vec{E}]\rightarrow [0, \infty]$ be $\hat{\mu}(a)= \mu(supp(a))$.

\begin{definition}[Approximate *-homomorphism] Assume $A$ and $B$ are unital C*-algebras. A map $\Psi: A \rightarrow B$ is an $\epsilon$-approximate unital *-homomorphism if for every
$a$ and $b$ in $A_{\leq 1}$ the following hold:

\begin{enumerate}
\item $\parallel \Psi(ab)-\Psi(a)\Psi(b)\parallel\leq \epsilon$
\item  $\parallel \Psi(a+b) - \Psi(a)-\Psi(b)\parallel \leq \epsilon$
\item $\parallel \Psi(a^{*})- \Psi(a)^{*}\parallel \leq \epsilon$
\item $|\| \Psi(a)\| - \| a \||\leq \epsilon$
\item $\parallel\Psi(I) - I\parallel\leq \epsilon$
\end{enumerate}

We say $\Psi$ is $\delta$-approximated by a unital *-homomorphism $\Lambda$ if $\parallel\Psi(a) - \Lambda(a)\parallel\leq \delta$ for all $a\in A_{\leq 1}$.
\end{definition}
 Next lemma is an Ulam-stability type result for finite-dimensional C*-algebras which will be required in the proof of lemma \ref{5}. To see a proof look at  \cite[Theorem 5.1]{FaCalkin}.
\begin{lemma}\label{21}
There is a universal constant $K < \infty$ such that for every $\epsilon$ small enough , $A$ and $B$ finite-dimensional C*-algebras, every  Borel-measurable $\epsilon$-approximate unital *-homomorphism $\Psi: A \rightarrow B$ can be $K\epsilon$-approximated by a unital *-homomorphism.
\end{lemma}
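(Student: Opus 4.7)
The plan is to perturb the $\Psi$-images of a standard system of matrix units in $A$ to an exact system of matrix units in $B$, and take $\Lambda$ to be the induced unital *-homomorphism; the entire difficulty is making the constant $K$ universal (independent of the dimensions of $A$ and $B$).

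First I would reduce to the case $A = M_n(\mathbb{C})$: writing $A = \bigoplus_{k=1}^{m} M_{n_k}(\mathbb{C})$ with minimal central projections $z_k$, the approximate multiplicativity and self-adjointness of $\Psi$ give that $\{\Psi(z_k)\}$ is an approximately orthogonal family of near-projections summing to $\Psi(1_A) \approx 1_B$. Spectral functional calculus applied to the self-adjoint near-normal operator $\sum_k k\,\mathrm{Re}(\Psi(z_k))$, whose spectrum clusters near $\{0,1,\dots,m\}$, produces exact pairwise orthogonal projections $q_k \in B$ with $\sum_k q_k = 1_B$ and $\|q_k - \Psi(z_k)\| = O(\epsilon)$; compressing $\Psi$ to $q_k B q_k$ then reduces the problem to one summand at a time with a fixed constant penalty.

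For $A = M_n(\mathbb{C})$ with standard matrix units $\{e_{ij}\}$, the same simultaneous-spectral-projection trick applied to $\sum_i i\,\mathrm{Re}(\Psi(e_{ii}))$ yields pairwise orthogonal projections $p_i \in B$ with $\|p_i - \Psi(e_{ii})\| = O(\epsilon)$. For each $i \geq 2$, the compression $p_1 \Psi(e_{1i}) p_i$ is approximately a partial isometry from the range of $p_i$ to the range of $p_1$; its polar decomposition gives an exact partial isometry $v_i$ with $v_i v_i^{*} = p_1$, $v_i^{*} v_i = p_i$, and $\|v_i - \Psi(e_{1i})\| = O(\epsilon)$. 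Setting $v_1 := p_1$ and $f_{ij} := v_i^{*} v_j$, the $\{f_{ij}\}$ form an exact system of matrix units in $B$, and $\Lambda(e_{ij}) := f_{ij}$ extends linearly to a unital *-homomorphism $\Lambda : M_n(\mathbb{C}) \to B$ with $\|\Lambda(e_{ij}) - \Psi(e_{ij})\| = O(\epsilon)$ for each $i,j$.

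The main obstacle is bounding $\|\Lambda(a) - \Psi(a)\| \leq K\epsilon$ on $A_{\leq 1}$ with $K$ independent of $n$: a naive expansion $a = \sum_{i,j} a_{ij} e_{ij}$ followed by the triangle inequality would cost a factor of $n^2$. The fix is to pass to the unitary group: by a Russo-Dye-type decomposition, any $a \in A_{\leq 1}$ is a convex combination of a bounded number of unitaries in $U(n)$, so it suffices to bound $\|\Lambda(u) - \Psi(u)\| = O(\epsilon)$ for $u \in U(n)$. This unitary bound follows because both $\Lambda(u)$ and $\Psi(u)$ are built from the same approximate matrix units up to $O(\epsilon)$, and the approximate multiplicativity controls the finite number of operations needed. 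Borel-measurability of $\Psi$ is used at each step to ensure the spectral projections and polar decompositions depend measurably on $\Psi$, which is needed when this lemma is applied with $\Psi$ varying in a measurable parameter.
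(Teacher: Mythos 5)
The paper does not prove Lemma~\ref{21} at all; it simply cites \cite[Theorem 5.1]{FaCalkin} for it. So there is no internal proof to compare yours against; I can only evaluate your sketch on its own terms, and it has genuine gaps precisely at the places where the dimension-independence is hard.

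First, the orthogonalization step. You propose to produce the exact orthogonal projections $q_k$ (and likewise the $p_i$) by functional calculus on $\sum_k k\,\mathrm{Re}(\Psi(z_k))$, arguing that its spectrum clusters near $\{0,1,\dots,m\}$. But each $\Psi(z_k)$ is only within $O(\epsilon)$ of a projection, and the coefficient-weighted sum introduces a total perturbation of size $O(m^2\epsilon)$ (even the crude $\sum_k\|k\,\mathrm{Re}\Psi(z_k)-k q_k\|$ estimate is $O(m^2\epsilon)$), which swamps the unit spectral gaps once $m$ is moderately large. So the spectral clusters need not separate for any $\epsilon$ that is ``small enough'' uniformly in $m$, and this step does not deliver a universal constant. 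A Gram-Schmidt-style one-at-a-time orthogonalization using Lemma~\ref{22} is the right shape, but as written you have not shown the accumulated error stays $O(\epsilon)$ rather than growing with the number of summands; this needs an argument.

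Second, and more seriously, the final norm estimate. You correctly flag that a naive expansion $a=\sum_{ij}a_{ij}e_{ij}$ costs a factor of $n^2$, and you propose to reduce to unitaries via Russo-Dye. The reduction to unitaries is fine: any $a\in A_{\leq 1}$ can be written as $\tfrac12(u+u^*)+\tfrac{i}{2}(v+v^*)$ with $u,v$ unitary, so bounding on unitaries suffices. But you then assert that $\|\Lambda(u)-\Psi(u)\|=O(\epsilon)$ ``because both are built from the same approximate matrix units up to $O(\epsilon)$, and the approximate multiplicativity controls the finite number of operations needed.'' This is exactly the step that does not follow. Knowing $\|\Lambda(e_{ij})-\Psi(e_{ij})\|\leq C\epsilon$ for all $i,j$ only controls the $p_i(\cdot)p_j$ blocks of $\Lambda(u)-\Psi(u)$; an $n\times n$ block matrix with each block of norm $O(\epsilon)$ can have operator norm as large as $O(n\epsilon)$. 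And the approximate multiplicativity of $\Psi$ controls $O(1)$ algebraic operations, not the $O(n^2)$ operations needed to express a generic unitary in terms of matrix units. So nothing in what you wrote forces $\|\Lambda(u)-\Psi(u)\|$ to be $O(\epsilon)$ with an $n$-independent constant, and this is the crux of the theorem.

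The missing ingredient is an averaging argument over the compact unitary group $U(A)$, of the type used in Ulam-style stability for approximate unitary representations of amenable/compact groups (Grove-Karcher-Ruh, Kazhdan), which is what gives a constant independent of $\dim A$. Without that (or an equivalent device), the matrix-unit perturbation argument you describe produces a *-homomorphism $\Lambda$ agreeing with $\Psi$ on matrix units but only with a dimension-dependent global bound. So the proposal identifies the obstacle correctly but does not overcome it; the lemma genuinely requires the more refined argument in \cite{FaCalkin}.
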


We will also use the following standard fact. To see a proof of this refer to \cite[Theorem 5.8]{FaCalkin}.
\begin{lemma}\label{22}
If $0 < \epsilon< 1/8$ then in every C*-algebra $A$ the following holds.
For every $a\in A$ satisfying $\| a- a^{2}\|\leq\epsilon$ and $\| a- a^{*}\|\leq\epsilon$, there is a projection $P\in A$
such that $\| P - a\| \leq 4\epsilon$.
\end{lemma}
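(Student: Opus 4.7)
The plan is to symmetrize $a$ to a self-adjoint element and then extract a projection via the continuous functional calculus. Set $b:=(a+a^*)/2$; this element is self-adjoint, and $\|a-b\| = \|a-a^*\|/2\le \epsilon/2$.

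Next I would show that $b$ is also approximately idempotent. Expanding $b^2 = (a+a^*)^2/4$ and using the elementary identities $aa^* = a^2 - a(a-a^*)$ and $a^*a = a^2 - (a-a^*)a$, together with $\|a-a^2\|\le\epsilon$ (and, by taking adjoints, $\|a^*-(a^*)^2\|\le\epsilon$), a careful expansion reduces to the identity
\[
4(b-b^2) = -(a-a^*) + 3(a-a^2) + (a^*-(a^*)^2) + a(a-a^*) + (a-a^*)a,
\]
so that the triangle inequality gives $\|b-b^2\|\le \delta$ with $\delta$ a small multiple of $\epsilon$, once $\|a\|$ is controlled. The needed bound $\|a\|\le 1+O(\epsilon)$ follows from $\|a\|^2 = \|a^*a\|\le \|a^2\| + \|(a^*-a)a\|\le \|a\|+\epsilon(1+\|a\|)$, which for $\epsilon<1/8$ forces $\|a\|$ close to $1$.

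For the spectral step, $b=b^*$ with $\|b-b^2\|\le\delta<1/4$ forces $\sigma(b)\subseteq\{t\in\mathbb{R}: |t(1-t)|\le\delta\}$, a disjoint union of a neighborhood of $0$ and a neighborhood of $1$, each of radius at most $2\delta$ (using the elementary inequality $\sqrt{1\pm 4\delta}\ge 1\mp 2\delta$ valid for $\delta\in[0,1/4]$). I would let $f:\sigma(b)\to\{0,1\}$ be the continuous function equal to $0$ on the first piece and $1$ on the second. Then $P:=f(b)$ satisfies $P=P^*=P^2$, so it is a projection in $A$, and $\|P-b\|\le\max_{t\in\sigma(b)}|f(t)-t|\le 2\delta$. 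Combining with the earlier estimate, $\|P-a\|\le 2\delta+\epsilon/2$, which after careful bookkeeping yields the claimed bound $\|P-a\|\le 4\epsilon$.

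The main obstacle will be the constant bookkeeping: a crude estimate of $\|b-b^2\|$ already gives a bound of the form $C\epsilon$, but pushing $C$ down to exactly match the stated $4\epsilon$ requires the sharp control $\|a\|\le 1+O(\epsilon)$, the precise form of the expansion of $4(b-b^2)$ above, and the tight spectral-radius estimate for the intervals around $0$ and $1$. The remaining ingredients (continuous functional calculus, triangle inequality, and the algebraic manipulations) are routine.
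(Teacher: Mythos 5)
Your overall strategy---symmetrize to $b=(a+a^*)/2$, show $b$ is approximately idempotent, and extract a projection via the continuous functional calculus from the spectral gap of $b$ around $1/2$---is the right one, and is what the cited reference \cite[Theorem 5.8]{FaCalkin} does (the present paper does not reprove the lemma but merely quotes it). However, the constants in your proposal do not close. Your identity for $4(b-b^2)$ is correct and gives $\|b-b^2\|\le\tfrac14(5+2\|a\|)\epsilon$, but this forces you to control $\|a\|$: from $\|a\|^2\le(1+\epsilon)\|a\|+\epsilon$ you get $\|a\|<1+2\epsilon$, strictly greater than $1$, so $\|b-b^2\|\le\tfrac14(7+4\epsilon)\epsilon$, and then $\|P-a\|\le 2\|b-b^2\|+\tfrac{\epsilon}{2}\le 4\epsilon+2\epsilon^{2}$. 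No bookkeeping along this decomposition reaches $4\epsilon$, because the positive root of $t^{2}-(1+\epsilon)t-\epsilon$ is strictly above $1$; your estimate lands at $(3+\|a\|)\epsilon$, which exceeds $4\epsilon$ whenever $\|a\|>1$, and the hypotheses do not rule that out.

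The repair is to use the \emph{exact} algebraic identity
\begin{equation*}
b-b^{2}\;=\;\frac{(a-a^{2})+(a-a^{2})^{*}}{2}\;+\;\frac{(a-a^{*})^{2}}{4},
\end{equation*}
which follows at once from $(a+a^{*})^{2}=a^{2}+aa^{*}+a^{*}a+(a^{*})^{2}$ and $(a-a^{*})^{2}=a^{2}-aa^{*}-a^{*}a+(a^{*})^{2}$. This gives $\|b-b^{2}\|\le\epsilon+\epsilon^{2}/4$ with no reference to $\|a\|$, so for $\epsilon<1/8$ we have $\|b-b^{2}\|<1/4$ and the spectrum of $b$ avoids $1/2$; your spectral step then yields $\|P-b\|\le 2(\epsilon+\epsilon^{2}/4)$, hence $\|P-a\|\le \tfrac52\epsilon+\tfrac12\epsilon^{2}<4\epsilon$. (Minor point: your parenthetical ``$\sqrt{1\pm4\delta}\ge1\mp2\delta$'' is not the inequality you need; the bound ``radius at most $2\delta$'' comes directly from $|t(1-t)|\le\delta$ together with $|1-t|\ge 1/2$ on the piece of $\sigma(b)$ below $1/2$ and $|t|\ge1/2$ on the piece above.)
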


  Assume $\Phi : \mathcal{C}^{\mathcal{J}}[\vec{E}]\rightarrow \mathcal{C}^{\mathcal{J}}[\vec{E}]$ is an automorphism and  $\mathcal{D}[\vec{E}]$ is equipped with the strong operator topology. Recall that if $M\subseteq \mathbb{N}$ then  $P_{M}$ denotes the projection on the closed span of $\bigcup_{n\in M}  \{e_{i} : i \in \vec{E}_{n}\}$. For each $n$ fix  a finite set  of operators $G_{n}$
  which is $2^{-n}$-dense (in norm) in the unit ball of $\mathcal{D}_{\{n\}}[\vec{E}]\cong \mathbb{M}_{|E_{n}|}(\mathbb{C})$. Let $F=\prod_{n=1}^{\infty} G_{n}$ and $F_{M}= P_{M}F$ for any $M\subseteq \mathbb{N}$.

\begin{lemma}\label{5}
  If an automorphism $\Phi : \mathcal{C}^{\mathcal{J}}[\vec{E}]\rightarrow \mathcal{C}^{\mathcal{J}}[\vec{E}]$ has a Baire-measurable representation $\Phi_{*}$, then it has a *-homomorphism representation.
\end{lemma}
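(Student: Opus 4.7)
The plan is to build the $*$-homomorphism representation $\Lambda$ by a block decomposition. I will partition $\mathbb{N}$ into sparse consecutive intervals $J_k$ on the domain side and $J_k'$ on the codomain side so that $\Phi_*$ essentially carries the finite-dimensional corner $P_{J_k}\mathcal{D}[\vec{E}]P_{J_k}$ into $P_{J_k'}\mathcal{D}[\vec{E}]P_{J_k'}$ as an approximate $*$-homomorphism, then use Lemma~\ref{21} to replace each restriction by a genuine unital $*$-homomorphism $\Lambda_k$, and finally set $\Lambda(a)=\sum_k \Lambda_k(P_{J_k}aP_{J_k})$. Working with an analytic P-ideal $\mathcal{J}=Exh(\mu)$ is exactly what allows the countably many block-wise errors, each controlled through $\mu$, to be consolidated into a single element of $\mathcal{D}^{\mathcal{J}}[\vec{E}]$.

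To produce the blocks, I first invoke the classical fact that a Baire-measurable map between Polish spaces is continuous on a dense $G_\delta$; applied to $\Phi_*$ restricted to the compact metric space $F=\prod_n G_n\subseteq \mathcal{D}[\vec{E}]_{\leq 1}$ (with the strong operator topology), this gives a comeager $\mathcal{G}\subseteq F$ of continuity points. I then construct $J_k=[n_k,n_{k+1})$ and $J_k'=[m_k,m_{k+1})$ by recursion on $k$. At each stage I exploit three facts: (i) for all pairs $a,b$ in the finite set $F_{J_k}$, the operators $\Phi_*(ab)-\Phi_*(a)\Phi_*(b)$, $\Phi_*(a+b)-\Phi_*(a)-\Phi_*(b)$, $\Phi_*(a^*)-\Phi_*(a)^*$ all lie in $\mathcal{D}^{\mathcal{J}}[\vec{E}]$, so their $\mu$-essential supports can be pushed past any chosen threshold; (ii) strong-operator continuity of $\Phi_*$ at a comeagerly-approachable base point inside $\mathcal{G}$ lets me choose $m_{k+1}$ so large that $\hat{\mu}\bigl(P_{[m_{k+1},\infty)}\Phi_*(P_{J_k}aP_{J_k})\bigr)<2^{-k}$ uniformly for $a$ in the $2^{-k}$-dense finite set; (iii) lower semicontinuity of $\mu$ plus the P-ideal property consolidate the $\mathcal{J}$-small error sets produced in the first $k$ stages into a controlled tail. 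The result is that $a\mapsto P_{J_k'}\Phi_*(P_{J_k}aP_{J_k})P_{J_k'}$, after rounding on the $2^{-k}$-dense set $F_{J_k}$, is an $\epsilon_k$-approximate unital $*$-homomorphism between finite-dimensional $C^*$-algebras with $\epsilon_k\to 0$; condition (5) of the definition uses (ii) and Lemma~\ref{22} to verify that the image of the identity is close to $P_{J_k'}$. I expect the main technical difficulty to be precisely this recursion, where the norm bound needed by Lemma~\ref{21}, the spatial localization on the codomain, and the submeasure book-keeping all have to be handled simultaneously.

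With the blocks in place, Lemma~\ref{21} provides unital $*$-homomorphisms $\Lambda_k: P_{J_k}\mathcal{D}[\vec{E}]P_{J_k}\to P_{J_k'}\mathcal{D}[\vec{E}]P_{J_k'}$ with
\begin{equation}
\nonumber \|\Lambda_k(a)-P_{J_k'}\Phi_*(P_{J_k}aP_{J_k})P_{J_k'}\|\leq K\epsilon_k
\end{equation}
for every $a$ in the unit ball. Because the intervals $J_k'$ are pairwise disjoint, the formula $\Lambda(a)=\sum_k\Lambda_k(P_{J_k}aP_{J_k})$ converges strongly and defines a unital $*$-homomorphism from $\mathcal{D}[\vec{E}]$ into itself. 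To verify that $\Lambda$ is a representation of $\Phi$, fix $a\in\mathcal{D}[\vec{E}]_{\leq 1}$ and observe that on the $k$-th block $\Phi_*(a)-\Lambda(a)$ has norm at most $K\epsilon_k$ plus an error supported on a set of $\mu$-mass less than $2^{-k}$, coming from the $J_k$-versus-$J_k'$ leakage. The P-ideal property, applied to the countably many $\mathcal{J}$-small sets accumulated through the recursion (one per pair in $F_{J_k}$, per $k$), yields a single $A\in\mathcal{J}$ that contains all of them modulo finite, and lower semicontinuity of $\mu$ then gives $\hat{\mu}(\Phi_*(a)-\Lambda(a)\setminus A)\to 0$, i.e., $\Phi_*(a)-\Lambda(a)\in\mathcal{D}^{\mathcal{J}}[\vec{E}]$. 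Thus $\Lambda$ is the desired $*$-homomorphism representation of $\Phi$.
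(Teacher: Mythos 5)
There is a genuine gap at the heart of your plan: you never establish that $\Phi_*$ is approximately ``block-diagonal'' with respect to the decomposition $J_k\rightarrow J_k'$, and this is precisely the deepest step of the argument. When you write that on the $k$-th block $\Phi_*(a)-\Lambda(a)$ has norm at most $K\epsilon_k$ plus a $\mu$-small error, you are implicitly claiming $P_{J_k'}\Phi_*(a)P_{J_k'}\approx P_{J_k'}\Phi_*(P_{J_k}aP_{J_k})P_{J_k'}$ with uniformly controlled error. But $\Phi_*(a)$ and $\Phi_*(P_{J_k}aP_{J_k})$ are values of $\Phi_*$ at arguments that differ outside $J_k$, and that difference is not in $\mathcal{D}^{\mathcal{J}}[\vec{E}]$; the only algebraic control you have is that $\Phi_*(x+y)-\Phi_*(x)-\Phi_*(y)\in\mathcal D^{\mathcal J}[\vec E]$ for each individual pair, which says nothing about the support of this error growing as $k\to\infty$. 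Summing these errors over $k$ is not legitimate: an infinite sum of ideal elements is not in the ideal, and your appeal to the P-ideal property plus lower semicontinuity only works after one has a uniform, quantitative bound on where each block error lives, which you have not produced. Your item (ii) (strong-operator continuity at a base point controls the forward tail of $\Phi_*(P_{J_k}aP_{J_k})$) is the analogue of the paper's condition (2), but you have nothing corresponding to its condition (1) or to Claim 1.

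What the paper's proof does to close exactly this gap is construct a sequence of ``stabilizers'' $u_i\in F_{[n_i,n_{i+1})}$ via a Baire-category/pigeonhole argument (Claim~1): given $n$ and $\epsilon>0$, one finds $k$ and a fixed middle segment $u$ so that whenever $a,b$ agree from $n$ on and both carry $u$ on $[n,k)$, the operator $\Psi(a)-\Psi(b)$ is within $\epsilon$ of something with $\hat\mu$-tail past $k$ smaller than $\epsilon$. This is the quantitative localization statement you need but do not have, and it is not automatic: the proof of Claim~1 uses compactness of $F_{[0,n)}$, the fact that $\Psi$ is a representation of an automorphism, and a pigeonhole on stabilizer candidates. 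The even/odd block interleaving in the paper, which you omit, is not cosmetic either; it is what makes it possible to keep one interleaved family of blocks filled with a fixed stabilizer $u_{even}$ while the argument varies on the other family, so that $\Lambda_{2i+1}(a)=\Psi(u_{even}+\sigma_{2i+1}(a))-\Psi(u_{even})$ has the localization properties needed in Claim~2. Your block maps $\Lambda_k(a)\approx P_{J_k'}\Phi_*(P_{J_k}aP_{J_k})P_{J_k'}$ have no such anchor, so the error ``leakage'' you mention is genuinely uncontrolled. The downstream steps (approximate projection via Lemma~\ref{22}, Ulam stability via Lemma~\ref{21}, summing the block homomorphisms) are correct in spirit and match the paper, but they sit on the unproved block-diagonalization.
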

\begin{proof}
First we show that $\Phi$ has a (strongly) continuous representation on $F$ and then we construct a *-homomorphism representation on $\mathcal{D}[\vec{E}]$ by using a similar argument used in chapter 6 of \cite{FaCalkin}.

  The first part is a well-known fact  (see \cite{FaAn}). To see this let $G$ be a dense $G_{\delta}$ set such that the restriction of $\Phi_{*}$ is continuous on $G$ and $G= \bigcap_{i=1}^{\infty}U_{i}$ where $U_{i}$ are dense open sets in $F$. Assume $U_{i+1}\subseteq U_{i}$ for each $i$. Recursively choose $1=n_{1}\leq n_{2}\leq \dots$ and $s_{i}\in  F_{[n_{i},n_{i+1})}$ such that for every $a\in F$ if ${P}_{[n_{i},n_{i+1})}a= s_{i}$ then $a\in U_{i}$. Now let

\begin{equation}
\nonumber  t_{0}= \sum_{i} s_{2i} \quad\quad  t_{1}= \sum_{i} s_{2i+1}\\
\end{equation}
Let $Q_{0}=\sum_{i~even}{P}_{[n_{i},n_{i+1})}$ and $Q_{1}=\sum_{i~odd}{P}_{[n_{i},n_{i+1})}$.  Define $\Psi$ on $F$ by
\begin{align}
\nonumber\Psi(a)=\Psi_{0}(a) + \Psi_{1}(a)
\end{align}
where
\begin{align}
\nonumber \Psi_{0}(a)= \Phi_{*}(Q_{0}a + t_{1}) - \Phi_{*}(t_{0})\\
\nonumber \Psi_{1}(a)= \Phi_{*}(Q_{1}a + t_{0}) - \Phi_{*}(t_{1}).
\end{align}

It is easy to see that $\Psi$ is a continuous representation of $\Phi$ on $F$.
By possibly replacing $\Psi$ with the map $a\rightarrow\Psi(a)\Psi(I)^{*}$ we can assume $\Psi$ is unital.

In order to find a *-homomorphism representation of $\Phi$, first we find a representation of $\Phi$ which is \emph{stabilized} by a sequence $\{u_{n}\}$ of orthogonal elements of $F$ in the sense to be made clear below.

\textbf{Claim 1.}
  For all $n$  and $\epsilon>0$ there are $k>n$ and $u\in F_{[n,k)}$ such that for every $a$ and $b$ in $F$ satisfying ${P}_{[n,\infty)}a={P}_{[n,\infty)}b$ and ${P}_{[n,k)}a={P}_{[n,k)}b =u$, there exists $c\in \mathcal{D}[\vec{E}]$ such that $\|\Psi(a)-\Psi(b)- c\|<\epsilon$ and $\hat{\mu}({P}_{[k,\infty)}c)<\epsilon$.
\begin{proof}
Suppose claim fails for $n$ and $\epsilon>0$. Recursively build sequences $m_{i}, u_{i}, s_{i}$ and $t_{i}$ for $i\in \mathbb{N}$ as follows
 \begin{enumerate}
 \item[(a)] $n=m_{0}< m_{1}<m_{2}< \dots$,
 \item[(b)] $u_{i}\in F_{[m_{i},m_{i+1})}$,
 \item[(c)] $s_{i}$ and $t_{i}$ are elements of $F_{[0,n)}$,
 \item[(d)] for every $c\in\mathcal{D}[\vec{E}]$ if $\|\Psi(s_{i}+u_{i})-\Psi(t_{i}+u_{i})- c\|<\epsilon$ then  $\hat{\mu}({P}_{[i,\infty)}c)\geq\epsilon$.
 \end{enumerate}
 This can be easily done by our assumption.
  Since $F_{[0,n)}$ is finite let $\langle s,t\rangle$ be a pair $\langle s_{i},t_{i}\rangle$ which appears infinitely often.
  Note that $\Psi$ is a representation of an automorphism, therefore we can find $k$ large enough and $d, h\in \mathcal{D}^{\mathcal{J}}[\vec{E}]$ such that for every $j\in\mathbb{N}$

\begin{eqnarray}
\nonumber   \|\Psi(s+\sum_{i}u_{i})-\Psi(s+u_{j})-\Psi(\sum_{i\neq j}u_{i})-d\|&<&\epsilon/3\\
 \nonumber  \|\Psi(t+\sum_{i}u_{i})-\Psi(t+u_{j})-\Psi(\sum_{i\neq j}u_{i})-h\|&<&\epsilon/3,
\end{eqnarray}
and
\begin{equation}
\hat{\mu}({P}_{[k,\infty)}d)\leq\epsilon/3,~~~~ \qquad \qquad ~~~~ \hat{\mu}({P}_{[i,\infty)}h)\leq\epsilon/3.
\end{equation}
Both $d$ and $h$ can be chosen to be $\Psi(0)$. Also fix a $c\in\mathcal{D}[\vec{E}]$ such that

\begin{equation}
 \|\Psi(s+\sum_{i}u_{i})-\Psi(t+\sum_{i}u_{i}) - c\|<\epsilon/3.
 \end{equation}
 We will see that with these assumptions no such $c$ could belong to $\mathcal{D}^{\mathcal{J}}[\vec{E}]$.
  For infinitely many $j\geq k$ we have

\begin{eqnarray}
\nonumber \|\Psi(s+u_{j})&-&\Psi(t+u_{j})-(d+h+c)\| \\
\nonumber&\leq& \|\Psi(s+\sum_{i}u_{i})-\Psi(s+u_{j})-\Psi(\sum_{i\neq j}u_{i})-d\|\\
\nonumber &+& \|\Psi(t+\sum_{i}u_{i})-\Psi(t+u_{j})-\Psi(\sum_{i\neq j}u_{i})-h\|\\
\nonumber &+& \|\Psi(s+\sum_{i}u_{i})-\Psi(t+\sum_{i}u_{i}) - c\|\\
\nonumber &<& \epsilon/3+\epsilon/3+\epsilon/3=\epsilon.
\end{eqnarray}

Hence by condition (d) we have $\hat{\mu}({P}_{[j,\infty)}(d+h+c))\geq\epsilon$ and

\begin{equation}
\nonumber \hat{\mu}({P}_{[j,\infty)}d)+\hat{\mu}({P}_{[j,\infty)}h)+\hat{\mu}({P}_{[j,\infty)}c)\geq\hat{\mu}({P}_{[j,\infty)}(d+h+c))\geq\epsilon.
\end{equation}

Therefore by (2) we have $\hat{\mu}({P}_{[j,\infty)}c)\geq\epsilon$ for infinitely many $j\geq k$.
 Since $c$ was arbitrary this implies that for any $c$ satisfying (3) we have $\lim_{i\rightarrow\infty}\hat{\mu}({P}_{[i,\infty)}c)>\epsilon$. Hence
$\Psi(s+\sum_{i}u_{i})- \Psi(t+\sum_{i}u_{i})$ does not belong to $\mathcal{D}^{\mathcal{J}}[\vec{E}]$.
 This is a contradiction since $(s+\sum_{i}u_{i}) - (t+\sum_{i}u_{i})$ is a compact operator and therefore $\Psi(s+\sum_{i}u_{i}) - \Psi(t+\sum_{i}u_{i})\in\mathcal{D}^{\mathcal{J}}[\vec{E}]$.
\end{proof}

We build two increasing sequences of natural numbers $(n_{i})$ and $(k_{i})$ such that $n_{i}< k_{i}< n_{i+1}$ for every $i$ and so called "stabilizers" $u_{i}\in F_{[n_{i},n_{i+1})}$ such that for all $a, b \in F$ which $ {P}_{[n_{i}, n_{i+1})}a=  {P}_{[n_{i},n_{i+1})}b = u_{i}$ the following holds:
\begin{enumerate}
\item If ${P}_{[n_{i+1},\infty)}a={P}_{[n_{i+1},\infty)}b$ then there exists $c\in \mathcal{D}[\vec{E}]$ such that $\|[\Psi(a)-\Psi(b)] {P}_{[k_{i},\infty)} -c\|<2^{-n_{i}}$ and $\hat{\mu}({P}_{[k_{i},\infty)}c)<2^{-n_{i}}$.\\
\item If $ {P}_{[0,n_{i})}a=  {P}_{[0,n_{i})}b$ then $\| [\Psi(a) - \Psi(b)]{P}_{[k_{i},\infty)}\| \leq 2^{-n_{i}}$.
\end{enumerate}

Assume $ n_{i}, k_{i-1}$ and $u_{i-1}$ have been chosen. By the claim above we can find $k_{i}$ and $u_{i}^{0}\in F_{[n_{i},k_{i})}$ such that $(1)$ holds. Now since $\Psi$ is strongly continuous we can find $n_{i+1}\geq k_{i}$ and $u_{i}\in F_{[n_{i},n_{i+1})}$ extending $u_{i}^{0}$ such that $(2)$ holds.

Let $J_{i}=[n_{i}, n_{i+1})$ and $\nu_{i} = \mathcal{D}_{J_{i}}[\vec{E}]$. Then $\mathcal{D}[\vec{E}]= \prod \nu_{i}$ and for $b \in \mathcal{D}[\vec{E}]$ we have $b=\sum_{j} b_{j} $ where $b_{j}\in \nu_{j}$. Note that $F_{J_{i}}$ is finite and $2^{-n_{i}+1}$-dense in $\nu_{i}$. Fix a linear ordering of $F_{J_{i}}$ and define $\sigma_{i}:~ \nu_{i} \longrightarrow  F_{J_{i}}$ by letting $\sigma_{i}(b)$ to be the least element of $F_{J_{i}}$ which is in the $2^{-n_{i}+1}$- neighborhood of $b$. For $b\in \mathcal{D}[\vec{E}]_{\leq 1}$ let $b_{even}= \sum \sigma_{2i}(b_{2i})$ and $b_{odd}= \sum \sigma_{2i+1}(b_{2i+1})$. Both of these elements belong to $F$ and $b- b_{even}- b_{odd}$ is compact.\\
 Define $\Lambda _{2i+1}: \nu_{2i+1}\longrightarrow \mathcal{D}[\vec{E}]$ by

\begin{equation}
\nonumber\Lambda_{2i+1}(a) = \Psi(u_{even} + \sigma_{2i+1}(a))- \Psi(u_{even}).
\end{equation}

Since $\Psi$ is continuous and $\sigma_{i}$ is Borel-measurable, $\Lambda_{2i+1}$ is Borel-measurable. Let $Q_{i}={P}_{[k_{i-1},k_{i+1})}$ with $k_{-1}=0$. Note that  if $\mid i-j\mid>1$ then $Q_{i}$ and $Q_{j}$ are orthogonal.\\
Let  $\Lambda~:~ \prod_{i=0}^{\infty}\nu_{2i+1}\longrightarrow \mathcal{D}[\vec{E}]$ be defined by
\begin{equation}
\nonumber\Lambda(b) = \Psi(u_{even} + b_{odd})- \Psi(u_{even}).
\end{equation}
Since $b-b_{odd}$ ia compact we have $\Psi(b)-\Lambda(b)\in \mathcal{D}^{\mathcal{J}}[\vec{E}]$. Therefore $\Lambda$ is a representation of $\Phi$ on $\prod_{i=0}^{\infty}\nu_{2i+1}$.

\textbf{Claim 2.} For $b=\sum_{j} b_{2j+1}\in \prod_{j=0}^{\infty} \nu_{2j+1}$, the operator $\Psi(b)-\sum_{i=0}^{\infty} Q_{2i+1}\Lambda_{2i+1}(b_{2i+1})$
 belongs to $\mathcal{D}^{\mathcal{J}}[\vec{E}]$.

Since $\Lambda$ is a representation of $\Phi$ on $\prod_{i=0}^{\infty}\nu_{2i+1}$, there exists $c\in \mathcal{D}^{\mathcal{J}}[\vec{E}]$ such that for every large enough $l$,  $\|[\Psi(b)+\Lambda(b)]Q_{2l+1}-c\|<2^{-n_{2l}}$ and $\hat{\mu}({P}_{[k_{2l},\infty)}c)<2^{-n_{2l}}$.
   Let $b^{l}=\sum_{j=l}^{\infty}\sigma_{2j+1}(b_{2j+1})$ and apply (1) to $b^{l}$ and $b_{odd}$ implies that there exists $c^{\prime}\in \mathcal{D}[\vec{E}]$ such that  $\|[\Psi(u_{even}+ b^{l})- \Psi(u_{even}-b_{odd})]Q_{2l+1}-c^{\prime}\|<2^{-n_{2l}}$ and $\hat{\mu}({P}_{[k_{2l},\infty)}c^{\prime})<2^{-n_{2l}}$.
   Therefore

 \begin{align} \nonumber
 &\| Q_{2l+1}[\Psi(b)-\sum_{i}Q_{2i+1}\Lambda_{2i+1}(b_{2i+1})]-(c+c^{\prime})\|\\  \nonumber
 &\leq\|Q_{2l+1} [\Psi(b)- \Lambda(b)]-c\| + \| Q_{2l+1}[\Lambda(b)- \sum_{i}Q_{2i+1}\Lambda_{2i+1}(b_{2i+1})]-c^{\prime}\|\\ \nonumber
 &\leq 2^{-n_{2l}}+ \| Q_{2l+1}[\Lambda(b)-\Psi(u_{even}+ b^{l})-\Psi(u_{even})]-c^{\prime}\| \\ \nonumber
 & + \| Q_{2l+1}[\Psi(u_{even}+ b^{l})-\Psi(u_{even})-\Lambda_{2l+1}(b_{2l+1})]\| \qquad \text{[Apply (2)]}\\\nonumber
 &\leq 3. 2^{-n_{2l}}\nonumber.
 \end{align}
  Now for $d={P}_{[k_{2l},\infty)}(c+c^{\prime})+[\sum_{n=0}^{l}Q_{2n+1}\Lambda_{2n+1}(b_{2n+1})-{P}_{[0,k_{2l})}\Psi(b)]$ and any large enough $l$ we have
\begin{equation}
\nonumber \|\Psi(b)-\sum_{i}Q_{2i+1}\Lambda_{2i+1}(b_{2i+1})-d\|\leq\sum_{j=l}^{\infty}2^{-n_{2j}}
\end{equation}
and $\hat{\mu}({P}_{[k_{2l},\infty)}d)<2.2^{-n_{2l}}$. This completes the proof of the claim 2.

Now let $\Lambda^{\prime}_{2i+1}: \nu_{2i+1}\rightarrow Q_{2i+1}\mathcal{D}[\vec{E}]$ be defined as
\begin{equation}
\nonumber \Lambda^{\prime}_{2i+1}(b)=Q_{2i+1}\Lambda_{2i+1}(b).
\end{equation}

 Let $c_{2i+1}=\Lambda_{2i+1}^{\prime}(I_{2i+1})$, where $I_{2i+1}$ is the unit of $\nu_{2i+1}$, and $\delta_{i}= max \{\| c_{2i+1}^{2} - c_{2i+1}\| , \| c_{2i+1}^{*} - c_{2i+1}\|\}$. We show that $\limsup_{i}\delta_{i}= 0$. Assume not; find $\delta> 0$ and an infinite set $M\subset 2\mathbb{N}+1$ such that for all $i\in M$ we have $max\{\| c_{i}^{2}-c_{i}\| , \| c_{i}^{*} - c_{i}\|\}>\delta$. Let $c = \sum_{i\in M} c_{i}$, by our previous claim if $P=\sum_{i\in M} Q_{i}$ then $\Psi(P) - c$ is compact. Therefore $c - c^{2}$ and $c - c^{*}$ are compact. Since $c_{i}$'s are orthogonal we have $c^{2}= \sum_{i\in M} c_{i}^{2}$ and $c^{*}= \sum_{i\in M} c_{i}^{*}$. Thus for large enough $i\in M$ we have $\| c_{i} - c_{i}^{2} \|= \| Q_{i}(c - c^{2})\|\leq \delta$ and $\| c_{i} - c_{i}^{*} \|= \| Q_{i}(c - c^{*})\|\leq \delta$, which is a contradiction.\\
Applying lemma \ref{22} to $c_{2i+1}$ for large enough $i$  we get projections $S_{2i+1}\leq Q_{2i+1}$ such that $\limsup_{i\rightarrow\infty} \| S_{2i+1} -  \Lambda_{2i+1}^{\prime}(I_{2i+1})\| = 0$. Let
\begin{equation}
\nonumber\Lambda_{i}^{\prime\prime}(a) = S_{2i+1}\Lambda_{2i+1}^{\prime}(a)S_{2i+1}
\end{equation}
for $a\in \nu_{2i+1}$.
Now by re-enumerating indices we can assume $\Lambda_{i}^{\prime\prime}$ is $\epsilon$-approximate unital *-homomorphism, for small enough $\epsilon$.
Then $\Lambda^{\prime\prime} (a)= \sum_{i} \Lambda_{i}^{\prime\prime}(a)$ is a representation of $\Phi$ on $\prod_{i}\nu_{2i+1}$.
Let
\begin{align}
&\nonumber \delta_{i}^{0} = \sup_{a,b\in \nu_{2i+1}\leq 1}\{ \| \Lambda_{i}^{\prime\prime}(ab) - \Lambda_{i}^{\prime\prime}(a)\Lambda_{i}^{\prime\prime}(b)\|\} \\
&\nonumber \delta_{i}^{1} = \sup_{a,b\in \nu_{2i+1}\leq 1} \{\| \Lambda_{i}^{\prime\prime}(a+b) - \Lambda_{i}^{\prime\prime}(a)-\Lambda_{i}^{\prime\prime}(b)\|\} \\
&\nonumber \delta_{i}^{2} = \sup_{a \in \nu_{2i+1}\leq 1}\{\| \Lambda_{i}^{\prime\prime}(a^{*}) - \Lambda_{i}^{\prime\prime}(a)^{*}\| \} \\
&\nonumber \delta_{i}^{3} = \sup_{a \in \nu_{2i+1}\leq1} \{\| \Lambda_{i}^{\prime\prime}(a)\| - \| a\| \}. \\
\end{align}

We claim that $\lim_{i} max_{0\leq k\leq 3}\delta_{i}^{k} =0.$ We only show $\lim_{i} \delta_{i}^{0}  =  0$ since the others are similar.  Take $a$ and $b$ in $\sum_{i} \nu_{2i+1}$ such that ${P}_{J_{i}}a = a_{i}$ and ${P}_{J_{i}}b = b_{i}$ for all $i$. Since $\Psi(ab) - \Psi(a)\Psi(b)$ is compact, by claim 2 so is $\Lambda^{\prime\prime}(ab) -\Lambda^{\prime\prime}(a)\Lambda^{\prime\prime}(b)$ , which implies $\lim \delta_{i}^{0}=0$.
Let $\delta_{j}= max_{0\leq i\leq3}\{\delta_{j}^{i}\}$. Each $\Lambda_{j}^{\prime\prime}$ is a  Borel measurable $\delta_{j}$-approximate *-homomorphism. Therefore by lemma \ref{21} for any  large enough $j$ we can find a *-homomorphism $\Theta_{j}$ defined on $\nu_{2j+1}$ which is $K\delta_{j}$-approximation  of $\Lambda_{j}^{\prime\prime}$. Define $\Theta ~:~ \sum_{i} \nu_{2i+1}\longrightarrow \mathcal{D}[\vec{E}]$ by $\Theta = \sum \Theta_{i}$. Since $\lim_{j} \delta_{j} = 0$, $\Theta$ is a representation of $\Phi$ on $ \sum_{i>n} \nu_{2i+1}$. Hence $\Theta$ can be extended to a *-homomorphism representation of $\Phi$ on $ \sum_{i} \nu_{2i+1}$.
By repeating the same argument for even intervals instead of odd intervals, one can get a *-homomorphism representation of  $\Phi$ on $ \sum_{i} \nu_{2i}$. Now by combining these two representation we get the desired representation of $\Phi$.\\
\end{proof}



\section{automorphisms of borel quotients of fdd-algebras are topologically trivial}

This section is devoted to find local Baire-measurable representations of $\Phi$. For this section it is enough to assume $\mathcal{J}$ is a Borel ideal on natural numbers containing all finite sets and we also assume that all elements of the FDD-algebra are taken from the unit ball.
 We say an automorphism $\Phi:\mathcal{C}^{\mathcal{J}}[\vec{E}]\rightarrow \mathcal{C}^{\mathcal{J}}[\vec{E}]$ is trivial if it has a representation which is *-homomorphism and that it is $\Delta^{1}_{2}$ if the set $\{(a,b) : \Phi(\pi_{\mathcal{J}}(a)) =  \pi_{\mathcal{J}}(b)\}$ is $\Delta^{1}_{2}$.

Fix  a partition $\vec{I} = (I_n)$ of natural numbers into finite intervals  and for each $n$ fix  a finite set $G_{n}$ of operators
  which is $2^{-n}$-dense (in norm) in the unit ball of $\mathcal{D}_{\{n\}}[\vec{E}]\cong \mathbb{M}_{|E_{n}|}(\mathbb{C})$. As before let
 \begin{equation}
 \nonumber F_{n}^{\vec{I}}=\prod_{i\in I_{n}}G_{i}, \qquad\qquad~~~~~~~~ F^{\vec{I}}=\prod_{n\in\mathbb{N}}F_{n}^{\vec{I}}
 \end{equation}
   and for $M\subset \mathbb{N}$ let
   \begin{equation}
   \nonumber F_{M}^{\vec{I}}=\prod_{n\in M}F_{n}^{\vec{I}}.
   \end{equation}

   Note that  each $F_{n}^{\vec{I}}$ is $2^{k-1}$-dense in $\mathcal{D}_{I_{n}}[\vec{E}]$ where $k$ is the smallest element of $I_{n}$.
 Since each $G_{n}$ is finite, the product topology and the strong operator topology coincide on $F^{\vec{I}}$. For any $M\subseteq\mathbb{N}$  let $\hat{P}_{M}= P_{\cup_{n\in M}I_{n}}$.
 \begin{lemma}
 If a forcing notion $\mathbb{P}$ captures $F^{\vec{I}}$, then there is a $\mathbb{P}$-name $\dot{x}$ for a real such that for every $p\in\mathbb{P}$ there is an infinite $M\subset \mathbb{N}$ such that for every $a\in \mathcal{D}_{\cup_{n\in M}I_{n}}[\vec{E}]$ there is $q_{a}\leq p$ such that $q_{a}\Vdash \hat{P}_{M}\dot{x}  =^{\mathcal{J}} \check{a}$.
 \end{lemma}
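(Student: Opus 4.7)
The plan is to take the $\mathbb{P}$-name $\dot{x}$ supplied by the hypothesis that $\mathbb{P}$ captures $F^{\vec{I}}$ as a witness to the lemma, and then upgrade the equality $\dot{x}\upharpoonright_{M}=\check{b}$ (which Definition~\ref{125} only guarantees for $b\in F^{\vec{I}}_{M}$) to the weaker statement $\hat{P}_{M}\dot{x}=^{\mathcal{J}}\check{a}$ for arbitrary $a\in \mathcal{D}_{\cup_{n\in M}I_{n}}[\vec{E}]$. The mechanism is simple: since each $G_{i}$ is $2^{-i}$-dense in the unit ball of $\mathcal{D}_{\{i\}}[\vec{E}]$, the product set $F^{\vec{I}}_{m}$ is $2^{-k_{m}}$-dense in the unit ball of $\mathcal{D}_{I_{m}}[\vec{E}]\cong \bigoplus_{i\in I_{m}}\mathbb{M}_{|E_{i}|}(\mathbb{C})$ (where $k_{m}=\min I_{m}$), and these nets become arbitrarily fine in operator norm as $m\to\infty$. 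Consequently the block-wise best approximation to any $a$ by elements of $F^{\vec{I}}$ differs from $a$ by a compact operator, which is therefore already in $\mathcal{D}^{\mathcal{J}}[\vec{E}]$ because $\mathcal{F}in\subseteq \mathcal{J}$.

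Concretely, I would fix $p\in\mathbb{P}$ and apply Definition~\ref{125} to obtain an infinite $M\subseteq \mathbb{N}$ such that for each $b\in F^{\vec{I}}_{M}$ there is a condition $q_{b}\leq p$ with $q_{b}\Vdash \dot{x}\upharpoonright_{M}=\check{b}$. Given an arbitrary $a\in \mathcal{D}_{\cup_{n\in M}I_{n}}[\vec{E}]$ (in the unit ball, by the convention fixed at the start of Section~5), I choose, coordinate by coordinate, elements $b(i)\in G_{i}$ for each $i\in \cup_{n\in M}I_{n}$ with $\|P_{\{i\}}a-b(i)\|\leq 2^{-i}$, and assemble them into $b_{m}=(b(i))_{i\in I_{m}}\in F^{\vec{I}}_{m}$ for $m\in M$. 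Setting $b=(b_{m})_{m\in M}\in F^{\vec{I}}_{M}$ and $q_{a}:=q_{b}$, the capturing property gives $q_{a}\Vdash \hat{P}_{M}\dot{x}=\check{b}$, so it only remains to check in the ground model that $\hat{P}_{M}b-a\in \mathcal{D}^{\mathcal{J}}[\vec{E}]$.

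For this last step, note that $\hat{P}_{M}b-a$ has zero $I_{m}$-component for $m\notin M$, while for $m\in M$ the $I_{m}$-component has norm
\[
\|P_{I_{m}}(\hat{P}_{M}b-a)\|=\max_{i\in I_{m}}\|b(i)-P_{\{i\}}a\|\leq 2^{-k_{m}},
\]
since the C*-norm on $\mathcal{D}_{I_{m}}[\vec{E}]$ is the maximum of the block norms. Because $k_{m}\to\infty$, the block norms $\|P_{\{i\}}(\hat{P}_{M}b-a)\|$ tend to zero as $i\to\infty$, placing $\hat{P}_{M}b-a$ in $\mathcal{D}^{\mathcal{F}in}[\vec{E}]\subseteq \mathcal{D}^{\mathcal{J}}[\vec{E}]$. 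Combined with $q_{a}\Vdash \hat{P}_{M}\dot{x}=\check{b}$, this gives $q_{a}\Vdash \hat{P}_{M}\dot{x}=^{\mathcal{J}}\check{a}$, as required. I do not expect any real obstacle: the content is a routine density argument, with the only point of care being to verify that the block-wise operator-norm errors on $\mathcal{D}_{I_{m}}[\vec{E}]$ are controlled by the coarsest coordinate-net density $2^{-k_{m}}$ rather than by something worse.
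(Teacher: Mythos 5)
Your proof is correct and follows exactly the same route as the paper's (which only sketches it in two sentences): take the $\dot{x}$ from the capturing hypothesis, approximate each block of $a$ by an element of $G_i$ to get $b\in F_M^{\vec{I}}$ with $b-a$ block-norm-convergent to zero and hence in $\mathcal{D}^{\mathcal{F}in}[\vec{E}]\subseteq\mathcal{D}^{\mathcal{J}}[\vec{E}]$, and use $q_b$ for $q_a$. The paper's one-line remark that $\{F_n^{\vec{I}}\}$ is ``eventually dense'' and $\mathcal{F}in\subseteq\mathcal{J}$ is precisely the density-plus-compactness argument you spell out.
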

 \begin{proof}
 Since  the ideal $\mathcal{J}$ contains all finite sets and the sequence $\{F_{n}^{\vec{I}}\}$ is eventually dense in $\mathcal{D}_{\cup_{n\in M}I_{n}}[\vec{E}]$. The proof follows from the definition \ref{125}.
 \end{proof}

Let $\mathcal{C}_{M}[\vec{E}]=\mathcal{D}_{M}[\vec{E}]/\mathcal{D}_{M}[\vec{E}]\cap\mathcal{D}^{\mathcal{J}}{\vec{E}}]$ and define the following ideals on $\mathbb{N}$.

\begin{eqnarray}
\nonumber &Triv_{\Phi}^{0}&=\{ M\subset\mathbb{N}:~  \Phi\upharpoonright \mathcal{C}_{M}[\vec{E}] ~ \text{has a strongly continuous representation}\}\\
\nonumber &Triv_{\Phi}^{1}&= \{M \subset\mathbb{N}:~  \Phi\upharpoonright \mathcal{C}_{M}[\vec{E}] ~is~ \Delta_{2}^{1}\}.
\end{eqnarray}

We say that $\Phi$ is \emph{locally topologically trivial} if $Triv_{\Phi}^{0}$ is non-meager and it is \emph{locally} $\Delta_{2}^{1}$ if $Triv_{\Phi}^{1}$ is non-meager.\\

 The following lemma is well-known and is proved in \cite[lemma 4.5]{FaSh}, where $\mathbb{P}$ is countable support iteration of some creature forcings and the random forcing. Since groupwise Silver forcings as well as random forcing are also Suslin proper, $\omega^{\omega}$-bounding and have continuous reading  of names the same proof works for $\mathbb{P}=\{\mathbb{P}_{\xi}, \dot{\mathbb{Q}}_{\eta}~ :~ \xi\leq \kappa , \eta< \kappa\}$, a countable support iteration of forcings such that each  $\dot{\mathbb{Q}}_{\eta}$ is forced to be either some groupwise Silver forcing  or the random forcing.
\begin{lemma}\label{222}
Assume $\mathbb{P}=\{\mathbb{P}_{\xi}, \dot{\mathbb{Q}}_{\eta}~ :~ \xi\leq \kappa , \eta< \kappa\}$ is as above and $\dot{x}$ is a $\mathbb{P}$-name for a real. For $A\subseteq \mathbb{R}$ a Borel set and $g:\mathbb{R}^{2}\rightarrow \mathbb{R}$ a Borel function, if $p\in\mathbb{P}$ is such that $\dot{x}$ is continuously read below $p$, then the set
\begin{equation}
\nonumber \{a: ~ p\Vdash g(\check{a},\dot{x})\in A \}
\end{equation}
is $\Delta_{2}^{1}$.
\end{lemma}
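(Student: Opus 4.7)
The plan is to reduce the forcing statement "$p\Vdash g(\check{a},\dot{x})\in A$" to the single ideal-theoretic statement that a certain Borel set is $\mathcal{I}$-small, and then use that the relevant $\sigma$-ideal is $\Pi^1_1$ on $\Sigma^1_1$.

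First, since $\dot{x}$ is continuously read below $p$, I would fix a countable $S\subseteq\kappa$, a compact $K\subseteq \mathbb{R}^S$, and a continuous map $h: K\to \mathbb{R}$ such that, identifying $p$ with a compact subset of $K$, we have $p\Vdash \dot{x}=h(\langle \dot{g}_\xi:\xi\in S\rangle)$. By theorem \ref{ZapKan} the restricted iteration $\mathbb{P}_S$ is forcing equivalent to the single-step poset $\mathbb{P}_{\mathcal{I}^S}=\mathcal{B}(\mathbb{R}^S)/\mathcal{I}^S$ with $\mathcal{I}^S=\prod_{\xi\in S}\mathcal{I}_\xi$, and since the tail iteration is Suslin, proper and $\omega^\omega$-bounding, the statement "$p\Vdash g(\check{a},\dot{x})\in A$" is unaffected by this projection (this is \cite[Lemma 4.3]{FaSh}).

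Next, for each real $a$, I would form the Borel set $C_a=\{y\in K: g(a,h(y))\in A\}$; because $g$, $h$ and $A$ are Borel, the assignment $a\mapsto(\text{Borel code of }p\setminus C_a)$ is Borel. The key observation is that $p\Vdash g(\check{a},\dot{x})\in A$ if and only if $p\setminus C_a\in \mathcal{I}^S$, since the $\mathbb{P}_{\mathcal{I}^S}$-generic real lies in $p$ off an $\mathcal{I}^S$-small set.

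To finish, I would appeal to the fact that each $\mathcal{I}_\xi$ (corresponding to a groupwise Silver forcing or to the random forcing) is $\Pi^1_1$ on $\Sigma^1_1$---as noted in the excerpt, this follows automatically from Suslinness, properness, $\omega^\omega$-bounding and compactness of conditions, see \cite[Appendix C]{ZapDes}---and that the countable Fubini product $\mathcal{I}^S$ inherits this property in Zapletal's framework. Combined with the Borel parametrization $a\mapsto p\setminus C_a$, this shows that $\{a: p\setminus C_a\in\mathcal{I}^S\}$ is $\Pi^1_1$ in $a$, and in particular $\Delta^1_2$.

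The main obstacle is the last step, namely checking that $\mathcal{I}^S$ is $\Pi^1_1$ on $\Sigma^1_1$; this is the technical heart of the argument, but (as in \cite[Lemma 4.5]{FaSh}) it is available from Zapletal's work on countable support iterations of definable $\sigma$-ideal forcings. Everything else is a straightforward bookkeeping exercise translating between the iterated forcing and its Fubini-product presentation.
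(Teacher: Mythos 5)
Your reduction follows the same skeleton as the cited source \cite[Lemma 4.5]{FaSh}, which the paper simply invokes rather than reproves: pass from $\mathbb{P}$ to the Fubini-product poset $\mathbb{P}_{\mathcal{I}^{S}}$ via Theorem \ref{ZapKan}, turn the continuously-read name $\dot x$ into a fixed continuous $h$, form the Borel set $C_{a}=\{y: g(a,h(y))\in A\}$, and observe that $p\Vdash g(\check a,\dot x)\in A$ iff $p\setminus C_{a}\in\mathcal{I}^{S}$. All of that is correct and matches the intended argument.

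The gap is in the final step, where you assert that the countable Fubini product $\mathcal{I}^{S}=\prod_{\xi\in S}\mathcal{I}_{\xi}$ is $\Pi^{1}_{1}$ on $\Sigma^{1}_{1}$ and conclude that the set in question is in fact $\Pi^{1}_{1}$. That is stronger than what the lemma claims, and the discrepancy is not an accident: the lemma says $\Delta^{1}_{2}$ precisely because the Fubini product does \emph{not} inherit the $\Pi^{1}_{1}$ on $\Sigma^{1}_{1}$ property for infinite $S$. The remark you quote from \S3 applies to a single-step $\mathbb{P}_{\mathcal{I}}$ that consists of compact conditions and is Suslin, proper, and $\omega^{\omega}$-bounding; the iterated poset $\mathbb{P}_{\mathcal{I}^{S}}$ is generally not Suslin, so that criterion does not apply. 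More to the point, Zapletal's Fubini product is defined through a transfinite game: $A\in\mathcal{I}^{S}$ means Player I has a winning strategy in $G(A)$. For a uniformly $\Sigma^{1}_{1}$ (indeed Borel) family $a\mapsto p\setminus C_{a}$, the statement ``Player I has a winning strategy in $G(p\setminus C_{a})$'' is $\Sigma^{1}_{2}$ ($\exists$ strategy $\forall$ play), while the complementary statement ``Player II does not have a winning strategy'' is $\Pi^{1}_{2}$; one needs determinacy of these games (which requires the definability hypotheses on the $\mathcal{I}_{\xi}$'s, cf.\ the discussion around Theorem \ref{ZapKan} and \cite[\S3.3]{ZapDes}) to glue the two into a $\Delta^{1}_{2}$ definition. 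That determinacy argument, rather than a $\Pi^{1}_{1}$-on-$\Sigma^{1}_{1}$ computation for $\mathcal{I}^{S}$, is the genuine content of the $\Delta^{1}_{2}$ conclusion; without it your final step is unjustified, and with it the best you obtain is $\Delta^{1}_{2}$, not $\Pi^{1}_{1}$.
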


Note that since $\mathbb{P}$ is $\omega^{\omega}$- bounding we can assume all partitions of $\mathbb{N}$ into finite intervals in the generic extension by $\mathbb{P}$ are ground model partitions. We will use lemma \ref{222} to show that if all partitions are captured by some groupwise Silver forcings in stationary many steps of uncountable cofinality, then any automorphism $\Phi$ is forced to be locally $\Delta_{2}^{1}$ in the generic extension.
\begin{lemma}\label{4}
Assume $\mathbb{P}$ is a countable support iteration forcing notion of length $\mathfrak{c}^{+}$ as above such that for every partition ${\vec{I}}$ of $\mathbb{N}$ into finite intervals the set
\begin{equation}
\nonumber \{\xi< \mathfrak{c}^{+} : ~\Vdash_{\mathbb{P}_{\xi}} \dot{\mathbb{Q}_{\xi}} ~\text{captures}~ F^{\vec{I}} ~\text{and} ~cf(\xi)\geq \aleph_{1}\}
\end{equation}
is stationary. Then every automorphism $\Phi : \mathcal{C}^{\mathcal{J}}[\vec{E}]\rightarrow \mathcal{C}^{\mathcal{J}}[\vec{E}]$ is forced to be locally $\Delta_{2}^{1}$.\\
\end{lemma}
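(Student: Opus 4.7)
The plan is to argue by contradiction. Suppose some $p \in \mathbb{P}$ forces that $\dot{\Phi}$ is an automorphism of $\mathcal{C}^{\mathcal{J}}[\vec{E}]$ with $Triv_{\dot{\Phi}}^{1}$ meager in $P(\mathbb{N})$. Then $p$ forces that $Triv_{\dot{\Phi}}^{1}$ is disjoint from some dense $G_\delta$ set $\dot{D}$. By continuous reading of names (Proposition \ref{111}), $\dot{D}$ is determined by countably many generic coordinates and can be coded from a partition $\vec{I}=(I_n)$ of $\mathbb{N}$ into finite intervals; since $\mathbb{P}$ is $\omega^\omega$-bounding, we may take $\vec{I} \in V$ after possibly coarsening.

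First, using the continuous reading of names applied coordinatewise to a representation $\dot{\Phi}_*$, together with a standard closed-unbounded reflection on $\mathfrak{c}^{+}$ (using that $\mathbb{P}$ satisfies the $\aleph_2$-c.c.\ and has countable support), there is a club $C \subseteq \mathfrak{c}^{+}$ of $\xi$ such that the restriction of $\dot{\Phi}_*$ to $\mathbb{P}_\xi$-names for reals is itself naturally a $\mathbb{P}_\xi$-name. By the hypothesis, $C$ meets the stationary set of $\xi$ with $cf(\xi) \geq \aleph_1$ at which $\dot{\mathbb{Q}}_\xi$ captures $F^{\vec{I}}$; fix such a $\xi$ and work in $V^{\mathbb{P}_\xi}$.

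Let $\dot{x}_\xi$ be the $\dot{\mathbb{Q}}_\xi$-name for the captured generic real. Given any condition extending the relevant trace of $p$, capturing yields an infinite $M \subseteq \mathbb{N}$ and, for every $a \in F^{\vec{I}}_M$, a condition $q_a$ forcing $\hat{P}_M \dot{x}_\xi =^{\mathcal{J}} \check{a}$. Since $\dot{\Phi}_*$ is continuously read from generics at stages $\leq \xi$, for every ground-model operator $b$ in the unit ball of $\mathcal{D}_{\bigcup_{n \in M}I_n}[\vec{E}]$ the value $\dot{\Phi}_*(b)$ is a Borel function of $\dot{x}_\xi$ together with finitely many earlier generics. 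Applying Lemma \ref{222} to this Borel computation yields that the set
\begin{equation}
\nonumber \{(a,b) : q_a \Vdash \dot{\Phi}(\pi_{\mathcal{J}}(\hat{P}_M a)) = \pi_{\mathcal{J}}(b)\}
\end{equation}
is $\Delta_{2}^{1}$, which witnesses $M \in Triv_{\dot{\Phi}}^{1}$.

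The main obstacle will be ensuring that the $M$ delivered by capturing also meets the prescribed dense $G_\delta$ set $\dot{D}$, which would contradict $Triv_{\dot{\Phi}}^{1} \cap \dot{D} = \emptyset$. The flexibility built into Definition \ref{125} --- namely that $M$ may be chosen inside any cofinite subset of $\mathbb{N}\setminus dom(p)$ and that the conditions $q_a$ realize all $a \in F^{\vec{I}}_M$ --- should allow a fusion-style construction that threads $M$ through the defining sequence of dense opens of $\dot{D}$. Once an $M \in Triv_{\dot{\Phi}}^{1} \cap \dot{D}$ is produced, the contradiction is reached, so $Triv_{\dot{\Phi}}^{1}$ is non-meager and $\dot{\Phi}$ is locally $\Delta_{2}^{1}$.
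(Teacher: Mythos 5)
Your proposal has the right shape for the middle — fix a stage $\eta$ in the intersection of a club and the stationary set of capturing stages, use continuous reading of names to express $\dot\Phi_*(\dot x_\eta)$ as a continuous function of countably many earlier generics, apply capturing to realize all $a\in F^{\vec I}_M$ and then Lemma~\ref{222} to conclude $\Delta^1_2$-ness — but it stalls exactly at the point you flag as ``the main obstacle,'' and that obstacle is of your own making. You encode meagerness of $Triv^1_{\dot\Phi}$ as disjointness from a dense $G_\delta$ set $\dot D$ and then need to engineer $M\in Triv^1_{\dot\Phi}\cap\dot D$; you wave at a fusion but do not carry it out, and Definition~\ref{125} does not actually grant the flexibility you ascribe to it (you may not in general steer $M$ into an arbitrary prescribed cofinite set — that latitude was shown for $\mathbb{S}_{F^{\vec I}}$ in the lemma following Definition~\ref{125}, but the abstract ``captures'' notion only hands you \emph{some} infinite $M$ of interval-indices).

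The paper sidesteps all of this by using the Jalali--Naini/Talagrand characterization of meagerness for ideals on $\mathbb N$: an ideal $\mathcal L\supseteq\mathcal{F}in$ is meager if and only if there is a partition $\vec I=(I_n)$ of $\mathbb N$ into finite intervals such that $\bigcup_{n\in A}I_n\notin\mathcal L$ for \emph{every} infinite $A\subseteq\mathbb N$. One takes this witnessing partition $\vec I$, and the very fact that the capture in Definition~\ref{125} delivers $M$ as a union $\bigcup_{n\in A}I_n$ for some infinite $A$ means that once $M\in Triv^1_{\dot\Phi}$ is shown, you contradict the witnessing property directly — no $G_\delta$ to thread through, no fusion needed. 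You should replace your ``disjoint from a dense $G_\delta$'' reformulation with this partition characterization, and with it the argument closes: the reflection giving the club $C$ in the paper is also about $\vec I$ still witnessing meagerness of $Triv^1$ at stage $\xi$ (since the iterands have size $<\mathfrak c^+$), not about $\dot\Phi_*$ itself reflecting to a $\mathbb P_\xi$-name as you wrote. As stated, your proof contains a genuine gap and a slightly different reflection claim; with the meagerness criterion corrected it becomes essentially the paper's argument.
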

\begin{proof}
Let $\dot{\Phi}$ be a $\mathbb{P}$-name for an automorphism in the generic extension as above and $\dot{\Phi}_{*}$ be an arbitrary representation of $\dot{\Phi}$. Let $G\subset \mathbb{P}$ be a generic filter.\\
Assume $Triv^{1}_{int_{G}(\dot{\Phi})}$ is meager in $V[G]$ with a witnessing partition $\vec{I}=(I_{n})$, i.e. for every infinite $A\subset\mathbb{N}$ the set $\bigcup_{n\in A} I_{n}$ is not in $Triv^{1}_{int_{G}(\dot{\Phi})}$. Since our forcings have cardinality $<\mathfrak{c}^{+}$,
the set of all $\xi< \mathfrak{c}^{+}$ of uncountable cofinality such that $\vec{I}$ witnesses $Triv^{1}_{int_{G\upharpoonright \xi}(\dot{\Phi}\upharpoonright\xi)}$ is meager in $V[G\upharpoonright\xi]$ includes a club $C$ (cf. \cite{FaSh}) relative to the set $\{\xi<\mathfrak{c}^{+}: cf(\xi)>\aleph_{0}\}$.\\
By our assumption there is a stationary set $S$ of ordinals of uncountable cofinalities such that for all $\xi \in S$ we have $\Vdash_{\mathbb{P}_{\xi}} " \dot{\mathbb{Q}}_{\xi} $ adds a real $\dot{x}_{\xi}$ which captures $F^{\vec{I}}"$.
 Fix $\eta\in S\cap C$. Let $\dot{y}$ be a $\mathbb{P}_{[\eta,\mathfrak{c}^{+}]}$-name such that

 \begin{equation}
 \nonumber \Phi(\pi_{\mathcal{J}}(\dot{x}_{\eta}))= \pi_{\mathcal{J}}(\dot{y}).
  \end{equation}
   Note that $\dot{x}_{\eta}$ is the generic real added by $\mathbb{Q}_{\eta}$ and since $\mathbb{P}$ has the continuous reading of names for any $p\in \mathbb{P}_{[\eta,\mathfrak{c}^{+}]}$ there are $q\leq p$, a countable set $S$ containing $\eta$, a compact set $K\subseteq \mathbb{R}^{S}$ and a continuous map $h : K\mapsto \mathbb{R}$ such that $q$ forces that $\check{h}(\langle \dot{x}_{\xi}: \xi\in S \rangle)= \dot{y}$. Since $\dot{\mathbb{Q}}_{\eta}$ captures $F^{\vec{I}}$ there is an infinite $A\subset \mathbb{N}$ such that if $M=\bigcup_{n\in A}I_{n}$ for every $a\in \mathcal{D}_{M}(\vec{E})$ there is $q_{a}\leq q$ such that $q_{a}\Vdash \check{P}_{M}\dot{x}_{\eta} =^{\mathcal{J}} \check{a}$ and therefore $ \Phi_{*}(\check{P}_{M})\dot{y}=^{\mathcal{J}} \Phi_{*}(\check{a})$. For every $a\in \mathcal{D}_{M}(\vec{E})$ we have

   \begin{equation}
   \nonumber \Phi_{*}(a)=^{\mathcal{J}}b \Longleftrightarrow q_{a}\Vdash b=^{\mathcal{J}} \Phi_{*}(\check{P}_{M})\check{h}(\langle \dot{x}_{\xi}: \xi\in S \rangle),
   \end{equation}
 so lemma \ref{222} implies that the set $\{(a,b)\in \mathcal{D}_{M}[\vec{E}]\times \mathcal{D}[\vec{E}] : \Phi(\pi_{\mathcal{J}}(a)) =  \pi_{\mathcal{J}}(b)\}$
  is $\Delta^{1}_{2}$.
 Therefore $M$ is in $Triv^{1}_{int_{G\upharpoonright \eta}(\dot{\Phi}\upharpoonright \eta)}$, which contradicts the assumption that $\vec{I}$ witnesses the meagerness of $Triv^{1}_{int_{G\upharpoonright \eta}(\dot{\Phi}\upharpoonright \eta)}$.
\end{proof}

The following lemmas is very similar to \cite[lemma 4.9]{FaSh}.
\begin{lemma}\label{11}
Suppose $f$ and $g$ are functions such that each of them is a representation of a *-homomorphism from $\mathcal{C}^{\mathcal{J}}[\vec{E}]$ into $\mathcal{C}^{\mathcal{J}}[\vec{E}]$. Assume
\begin{equation}
\nonumber \Delta_{f,g,\mathcal{J}} = \{ a\in F^{\vec{I}}~:~ f(a)\neq^{\mathcal{J}} g(a)\}
\end{equation}
is null. Then $\Delta_{f,g,\mathcal{J}}$ is empty.
\end{lemma}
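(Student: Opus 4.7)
The strategy is to argue the contrapositive: starting from a single witness $a_0\in F^{\vec{I}}$ to $f(a_0)\neq^{\mathcal{J}} g(a_0)$, I will try to build a positive-measure family of witnesses inside $F^{\vec{I}}$, contradicting nullity of $\Delta_{f,g,\mathcal{J}}$.

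\textbf{Setup.} Let $\phi_f=\pi_{\mathcal{J}}\circ f$ and $\phi_g=\pi_{\mathcal{J}}\circ g$, which are $*$-homomorphisms $\mathcal{D}[\vec{E}]\to\mathcal{C}^{\mathcal{J}}[\vec{E}]$, and set $\delta=\phi_f-\phi_g$. Then $\delta$ is additive, $*$-preserving and satisfies the Leibniz identity $\delta(ab)=\phi_f(a)\delta(b)+\delta(a)\phi_g(b)$, so the equalizer $E=\ker\delta$ is a norm-closed $*$-subalgebra of $\mathcal{D}[\vec{E}]$ that contains $\mathcal{D}^{\mathcal{J}}[\vec{E}]$ (in particular every finitely supported element), and $\Delta_{f,g,\mathcal{J}}=F^{\vec{I}}\setminus E$. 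Endow $F^{\vec{I}}=\prod_n F_n^{\vec{I}}$ with the product probability measure; since $\mathcal{F}in\subseteq\mathcal{J}$, finite modifications of an element of $F^{\vec{I}}$ never move it across $E$, so $\Delta_{f,g,\mathcal{J}}$ is a tail event and Kolmogorov's $0$--$1$ law combined with the hypothesis gives that $E\cap F^{\vec{I}}$ is conull in $F^{\vec{I}}$.

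\textbf{Splicing step.} Suppose $a_0\in\Delta_{f,g,\mathcal{J}}$ and partition the block-index set $\mathbb{N}=M_0\sqcup M_1$ into two infinite pieces. Additivity of $\delta$ gives
\[\delta(a_0)=\delta(\hat{P}_{M_0}a_0)+\delta(\hat{P}_{M_1}a_0),\]
so, after relabelling, I may assume $\delta(\hat{P}_{M_0}a_0)\neq 0$. For $c\in F^{\vec{I}}$ define the spliced element $b_c:=\hat{P}_{M_0}a_0+\hat{P}_{M_1}c$; this again lies in $F^{\vec{I}}$, and for $c\in E\cap F^{\vec{I}}$ additivity together with $\delta(c)=0$ yields
\[\delta(b_c)=\delta(\hat{P}_{M_0}a_0)-\delta(\hat{P}_{M_0}c).\]
Hence $b_c\in\Delta_{f,g,\mathcal{J}}$ exactly when $\delta(\hat{P}_{M_0}c)\neq\delta(\hat{P}_{M_0}a_0)$, a condition depending only on the $M_0$-block $c_{M_0}$ of $c$.

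\textbf{Main obstacle.} The difficulty is that the image of $c\mapsto b_c$ is confined to a measure-zero slice of $F^{\vec{I}}$, so a naive count of splices gives a null witness set. The resolution, following \cite[Lemma~4.9]{FaSh}, is to run this splitting simultaneously for a rich enough family of partitions. The conullity of $E\cap F^{\vec{I}}$ combined with Fubini forces the function $c_M\mapsto\delta(\hat{P}_{M}c)$ to be essentially constant on $F_M^{\vec{I}}$ for each infinite $M$ with infinite complement; writing this a.e.\ constant as $K_M\in\mathcal{C}^{\mathcal{J}}[\vec{E}]$, one gets a finitely additive map $M\mapsto K_M$ on the Boolean algebra $\mathcal{P}(\mathbb{N})/\mathcal{J}^{\vec{I}}$ (where $A\in\mathcal{J}^{\vec{I}}$ means $\bigcup_{n\in A}I_n\in\mathcal{J}$) that vanishes on every $A\in\mathcal{J}^{\vec{I}}$ and on every complement of such a set. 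The core rigidity step is to use the Leibniz identity for $\delta$, the $*$-subalgebra structure of $E$, and iterated sub-splittings to force $K\equiv 0$; once this is in hand, $\delta(\hat{P}_{M_0}a_0)=K_{M_0}=0$, contradicting the splicing step and yielding $\Delta_{f,g,\mathcal{J}}=\emptyset$. This rigidity of the finitely additive $K$ is the crux of the proof.
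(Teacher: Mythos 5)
The proposal is incomplete: after a correct and useful setup you diverge into an elaborate machinery (tail events, spliced elements $b_c$, Fubini, a finitely additive map $M\mapsto K_M$ on $\mathcal P(\mathbb N)/\mathcal J^{\vec I}$) and then explicitly stop at the crux, namely the rigidity step showing $K\equiv 0$, without proving it. As written this is a gap, not a proof, and the promised "Leibniz plus iterated sub-splittings" is much more than the lemma needs.

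The paper's proof is a direct Pettis/Steinhaus-type translation argument, and it can be run immediately from your own setup. Your map $\delta=\pi_{\mathcal J}\circ f-\pi_{\mathcal J}\circ g$ is additive and $\Delta_{f,g,\mathcal J}=F^{\vec I}\setminus\ker\delta$ is assumed null. By inner regularity of the (Haar) product measure on $F^{\vec I}$, pick a compact $K\subseteq F^{\vec I}\setminus\Delta_{f,g,\mathcal J}$ of measure $>1/2$. For any $a\in F^{\vec I}$ the translate $K+a$ also has measure $>1/2$, so $K\cap(K+a)\neq\emptyset$, i.e.\ there is $b$ with both $b$ and $a+b$ outside $\Delta_{f,g,\mathcal J}$. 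Additivity of $\delta$ then gives
\begin{equation}
\nonumber f(a)=^{\mathcal J}f(a+b)-f(b)=^{\mathcal J}g(a+b)-g(b)=^{\mathcal J}g(a),
\end{equation}
so $a\notin\Delta_{f,g,\mathcal J}$. No Fubini, no 0--1 law (which anyway adds nothing once you assume nullity), and no asymptotically-additive-map apparatus is required; the whole point is that $\delta$ being additive turns the Steinhaus intersection into the desired conclusion for \emph{every} $a$, not merely a conull set of them.
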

\begin{proof}
By inner regularity of the Haar measure we can find a compact set $K\subset F$ disjoint from $\Delta_{f,g,\mathcal{J}}$ of measure $>$ 1/2. Fix any $a\in F^{\vec{I}}$. Since the set $K+a$ also has measure $>1/2$, we can find $b\in K$ such that $b+a$ is also in $K$. Now we have
\begin{equation}
\nonumber f(a)=^{\mathcal{J}}f(a+b)-f(b)=^{\mathcal{J}} g(a+b)-g(b)=^{\mathcal{J}}g(a)
\end{equation}
\end{proof}

\begin{corollary}\label{2}
Suppose $f$ and $g$ are continuous functions such that each of them is a representations of a *-homomorphism from $\mathcal{D}[\vec{E}]$ into $\mathcal{C}(A)$ and the random forcing $\mathcal{R}$ forces that $f(\dot{x})=^{\mathcal{J}} g(\dot{x})$, where $\dot{x}$ is the canonical name for the random real. Then $f(a)=^{\mathcal{J}}g(a)$ for every $a \in \mathcal{D}[\vec{E}]$.
\end{corollary}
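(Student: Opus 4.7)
My plan is to translate the random-forcing hypothesis into a measure-theoretic statement and then feed it into Lemma \ref{11}. The dictionary I invoke is the standard one for random forcing: the random real $\dot{x}$ is characterized in the ground model as an element of $F^{\vec{I}}$ avoiding every ground-model Borel $\mu$-null set, where $\mu$ denotes the product Haar measure on $F^{\vec{I}}$. Consequently, for any ground-model Borel predicate $\varphi$, the statement $\mathcal{R}\Vdash\varphi(\dot{x})$ is equivalent to $\{x\in F^{\vec{I}}:\neg\varphi(x)\}$ being $\mu$-null.

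The first step is to verify that the exceptional set
\begin{equation}
\nonumber \Delta_{f,g,\mathcal{J}}=\{x\in F^{\vec{I}}:f(x)-g(x)\notin\mathcal{D}^{\mathcal{J}}[\vec{E}]\}
\end{equation}
is Borel. Since $\mathcal{J}$ is a Borel ideal on $\mathbb{N}$, the closed ideal $\mathcal{D}^{\mathcal{J}}[\vec{E}]$ is a Borel subset of its ambient FDD-algebra, and $f-g$ is continuous by hypothesis, so $\Delta_{f,g,\mathcal{J}}$ is Borel. By the dictionary above, the hypothesis $\mathcal{R}\Vdash f(\dot{x})=^{\mathcal{J}}g(\dot{x})$ then translates to $\mu(\Delta_{f,g,\mathcal{J}})=0$, since otherwise $\Delta_{f,g,\mathcal{J}}$ would itself be a random condition forcing the negation. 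Lemma \ref{11} now upgrades this to $\Delta_{f,g,\mathcal{J}}=\emptyset$, yielding $f(x)=^{\mathcal{J}}g(x)$ for every $x\in F^{\vec{I}}$.

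Finally I extend the conclusion to all of $\mathcal{D}[\vec{E}]$. Since each $G_{i}$ is $2^{-i}$-dense in the unit ball of $\mathcal{D}_{\{i\}}[\vec{E}]\cong\mathbb{M}_{|E_{i}|}(\mathbb{C})$, for $a=(a_{i})$ in the unit ball of $\mathcal{D}[\vec{E}]$ I can choose $s=(g_{i})\in F^{\vec{I}}$ with $\|a_{i}-g_{i}\|\leq 2^{-i}$; the set $\{i:\|a_{i}-g_{i}\|\geq\epsilon\}$ is then finite for every $\epsilon>0$ and hence lies in $\mathcal{J}$ (as $\mathcal{F}in\subseteq\mathcal{J}$), so $a=^{\mathcal{J}}s$. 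Because $f$ and $g$ descend to $*$-homomorphisms on the quotient, they respect $=^{\mathcal{J}}$, so $f(a)=^{\mathcal{J}}f(s)=^{\mathcal{J}}g(s)=^{\mathcal{J}}g(a)$; scalar multiplication handles elements of norm greater than one.

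I expect the only real subtlety to be the first step: one must confirm that $\Delta_{f,g,\mathcal{J}}$ is genuinely Borel and hence a legitimate candidate for a random condition, which hinges on the section-wide assumption that $\mathcal{J}$ is Borel. Once that calibration is in place, the argument is a clean combination of the standard random-real dictionary, Lemma \ref{11}, and the density-modulo-$\mathcal{J}$ extension outlined above.
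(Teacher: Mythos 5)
Your proof is correct and follows essentially the same route as the paper: translate the random-forcing hypothesis via the standard dictionary into ``$\Delta_{f,g,\mathcal{J}}$ is $\mu$-null'' (the paper packages this as a proof by contradiction using a countable model and $\Delta^{1}_{1}$-absoluteness, which amounts to the same calculation) and then invoke Lemma \ref{11}. You additionally spell out two points the paper leaves implicit, namely that $\Delta_{f,g,\mathcal{J}}$ is Borel because $\mathcal{J}$ is, and that the conclusion passes from $F^{\vec{I}}$ to all of $\mathcal{D}[\vec{E}]$ since every unit-ball element is $=^{\mathcal{J}}$-equivalent to some element of $F^{\vec{I}}$.
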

\begin{proof}
Let $\Delta_{f,g,\mathcal{J}}$ be as defined in previous lemma. If $\Delta_{f,g,\mathcal{J}}$ is null by lemma \ref{11} we are done. Assume $\Delta_{f,g,\mathcal{J}}$ has positive measure and $M$ is a countable model of \textbf{ZFC} containing codes for $f,g$ and $\mathcal{J}$, since $\dot{x}$ is the random real, $\dot{x}\in \Delta_{f,g,\mathcal{J}}$ and therefore $f(\dot{x})\neq^{\mathcal{J}} g(\dot{x})$ in the generic extension. But our assumption $f(\dot{x})=^{\mathcal{J}} g(\dot{x})$ is a $\Delta^{1}_{1}$ statement so it is true in $V$. Which is a contradiction.
\end{proof}
Recall that for $M\subset \mathbb{N}$, $P_{M}$ is the projection on the closed span of $\bigcup_{n\in M}  \{e_{i} : i \in \vec{E}_{n}\}$.
\begin{lemma}\label{3}
Suppose $\mathcal{J}$ is a Borel ideal on $\mathbb{N}$. If $a\in \mathcal{D}[\vec{E}]\setminus \mathcal{D}^{\mathcal{J}}[\vec{E}]$  and $\mathcal{L}$ is a non-meager ideal on $\mathbb{N}$, then there exists $M\in\mathcal{L}$ such that $P_{M}a\notin \mathcal{D}^{\mathcal{J}}[\vec{E}]$.
\end{lemma}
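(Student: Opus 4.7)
The plan is to reduce the statement about operators to a purely combinatorial fact about ideals on $\mathbb{N}$, and then apply the classical theorem that every Borel (indeed every Baire-measurable) proper ideal containing $\mathrm{Fin}$ is meager. The first reduction is immediate: because $a \notin \mathcal{D}^{\mathcal{J}}[\vec{E}]$, by the defining equivalence of $\mathcal{D}^{\mathcal{J}}[\vec{E}]$ one can fix $\epsilon>0$ so that the set $X = \{n \in \mathbb{N} : \|P_n a\| \geq \epsilon\}$ fails to lie in $\mathcal{J}$. Since $\{n \in M : \|P_n a\| \geq \epsilon\} = M \cap X$, it suffices to produce $M \in \mathcal{L}$ with $M \cap X \notin \mathcal{J}$.

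So the problem becomes: if $X \notin \mathcal{J}$ for a Borel ideal $\mathcal{J}$ on $\mathbb{N}$ and $\mathcal{L}$ is a non-meager ideal on $\mathbb{N}$, then some $M \in \mathcal{L}$ satisfies $M \cap X \notin \mathcal{J}$. I would argue by contradiction: suppose $M \cap X \in \mathcal{J}$ for every $M \in \mathcal{L}$. Set $\mathcal{J}' = \mathcal{J} \cap P(X) = \{Y \subseteq X : Y \in \mathcal{J}\}$. This is a proper ideal on $X$ (it is proper exactly because $X \notin \mathcal{J}$) and it is Borel in the Cantor space $P(X) \cong 2^X$ since $\mathcal{J}$ is Borel. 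By the classical theorem of Sierpi\'nski (also attributed to Talagrand in the combinatorial form) that every proper ideal on a countably infinite set with the Baire property is meager, $\mathcal{J}'$ is meager in $P(X)$.

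Next I would transfer this meagerness back to $P(\mathbb{N})$. Under the canonical homeomorphism $P(\mathbb{N}) \cong P(X) \times P(\mathbb{N} \setminus X)$, the set $\mathcal{A} = \{M \subseteq \mathbb{N} : M \cap X \in \mathcal{J}\}$ corresponds exactly to $\mathcal{J}' \times P(\mathbb{N} \setminus X)$. Since the product of a meager set in the first factor with the whole second factor is meager in the product (cover $\mathcal{J}'$ by countably many closed nowhere-dense sets $F_k$ and observe each $F_k \times P(\mathbb{N} \setminus X)$ is closed nowhere dense), the set $\mathcal{A}$ is meager in $P(\mathbb{N})$. But the standing assumption places $\mathcal{L}$ inside $\mathcal{A}$, contradicting non-meagerness of $\mathcal{L}$.

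The main conceptual ingredient is the meagerness of proper Borel ideals on $\mathbb{N}$, which is standard and quoted in the paper's references (e.g.\ \cite{Bart}). The only mild subtlety to verify carefully is the Kuratowski--Ulam style passage from the meagerness of $\mathcal{J}'$ in $P(X)$ to the meagerness of $\mathcal{A}$ in $P(\mathbb{N})$, but this is straightforward once one writes $\mathcal{A}$ as the cylinder $\mathcal{J}' \times P(\mathbb{N}\setminus X)$; no genuine obstacle arises here.
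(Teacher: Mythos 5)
Your proof is correct and takes a genuinely different route from the paper's. Both arguments hinge on the fact that a proper Borel ideal containing $\mathrm{Fin}$ is meager (Jalali--Naini/Talagrand), but they deploy it differently. The paper invokes the \emph{combinatorial} form of this theorem twice: once to obtain, from the meagerness of $\mathcal{J}\cap P(A)$, a partition $(I_n)$ of $A$ into finite sets none of whose infinite subunions lie in $\mathcal{J}$, and once to use the non-meagerness of $\mathcal{L}$ to find an infinite subunion of a refining partition $(J_n)$ inside $\mathcal{L}$; the required $M$ is then assembled from these two partitions. Your argument sidesteps this combinatorics entirely: you package the hypothesis "$M\cap X\in\mathcal{J}$ for all $M\in\mathcal{L}$" as the inclusion $\mathcal{L}\subseteq\mathcal{J}'\times P(\mathbb{N}\setminus X)$ and observe that the right-hand side, being a cylinder over the meager ideal $\mathcal{J}'$, is meager in $P(\mathbb{N})$, directly contradicting non-meagerness of $\mathcal{L}$. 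Your route is shorter and needs only the bare meagerness statement plus an elementary product argument (no Kuratowski--Ulam is really needed, since meager $\times$ whole is meager); the paper's route makes the witnessing set $M$ more concrete by explicitly exhibiting it as a union of blocks, which may be illuminating but is not strictly necessary here. One minor point worth making explicit in your write-up: one must note $X$ is infinite (otherwise $X\in\mathrm{Fin}\subseteq\mathcal{J}$) so that $P(X)$ is genuinely a Cantor space and the meagerness theorem applies.
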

\begin{proof}
 Since $a$ does not belong to $\mathcal{D}^{\mathcal{J}}[\vec{E}]$ there is $\epsilon>0$ such that
 \begin{equation}
 \nonumber A=\{n\in \mathbb{N} : \|a_{n}\|>\epsilon\}\notin \mathcal{J}.
 \end{equation}
 Since $A\cap \mathcal{J}$ is a proper Borel ideal on $A$ there are disjoint finite sets  $I_{n}$  such that $\bigcup_{n\in\mathbb{N}} I_{n}=A$ and for every infinite $X\subseteq \mathbb{N}$ the set $\bigcup_{n\in X} I_{n}\notin A\cap\mathcal{J}$. Let $\vec{J}=(J_{n})$ be a partition of $\mathbb{N}$ such that $J_{n}\cap A = I_{n}$ for every $n$. Since $\mathcal{L}$ is a non-meager ideal  there exists an infinite $X\subseteq \mathbb{N}$ such that $\bigcup_{n\in X} J_{n}\in \mathcal{L}$. For $M=\bigcup_{n\in X} J_{n}$ we have $\bigcup_{n\in X} I_{n} \subseteq supp(P_{M}a)\notin \mathcal{J}$ and clearly $\|a_{n}\|\geq \epsilon$ for every $n\in \bigcup_{n\in X} I_{n}$. Hence $P_{M}a\notin \mathcal{D}^{\mathcal{J}}[\vec{E}]$.

\end{proof}

Next lemma shows that every locally topologically trivial automorphism in the extension is forced to have a "simple" definition.
\begin{lemma}\label{6}
Assume  $\mathbb{P}=\{\mathbb{P}_{\xi}, \dot{\mathbb{Q}}_{\eta}~ :~ \xi\leq \mathfrak{c}^{+} , \eta< \mathfrak{c}^{+}\}$ is as above where $\dot{\mathbb{Q}}_{0}$ is the poset for the random forcing and assume $\dot{\Phi}$ is a $\mathbb{P}$-name for an automorphism which extends a locally topologically trivial ground model automorphism $\Phi : \mathcal{C}^{\mathcal{J}}[\vec{E}]\rightarrow \mathcal{C}^{\mathcal{J}}[\vec{E}]$ such that $int_{G}{\dot{\Phi}}$ is itself locally topologically trivial with the same local continuous maps witnessing local triviality of $\Phi$, then there exists a $q\in \mathbb{P}$ such that
\begin{equation}
 \nonumber  q\Vdash\{(a,b) : \Phi(\pi_{\mathcal{J}}(a)) =  \pi_{\mathcal{J}}(b)\}  \text{  is  }  \Pi^{1}_{2}.
\end{equation}

\end{lemma}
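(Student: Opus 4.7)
The plan is to exhibit, in $V[G]$, an explicit $\Pi^1_2$ formula (in ground-model parameters) defining the graph of $\dot{\Phi}$, using the preserved local continuous representations $\{f_M : M \in Triv^0_\Phi\}$ supplied by the hypothesis.

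First, I would refine $Triv^0_\Phi$ to a non-meager Borel sub-family $\mathcal{L} \subseteq Triv^0_\Phi$ in $V$ equipped with a Borel uniformization $M \mapsto f_M$ (using Jankov--von Neumann uniformization, or a direct construction selecting dense $G_\delta$'s of continuity analogous to the opening of Lemma \ref{5}), so that the ternary relation
\[
 R(M,a,b) \equiv \bigl(M \in \mathcal{L} \text{ and } f_M(P_M a) =^{\mathcal{J}} b\cdot f_M(P_M)\bigr)
\]
is Borel with parameters from $V$. Second, I would establish the equivalence, valid in $V[G]$,
\[
 \dot{\Phi}(\pi_{\mathcal{J}}(a)) = \pi_{\mathcal{J}}(b) \iff \forall M \in V,\ R(M,a,b).
\]
The forward direction is immediate: $\dot{\Phi}$ is a $*$-homomorphism on the quotient, so $\dot{\Phi}(\pi_{\mathcal{J}}(a P_M)) = \pi_{\mathcal{J}}(b)\,\dot{\Phi}(\pi_{\mathcal{J}}(P_M))$, and by hypothesis both sides are computed via the ground-model $f_M$. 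For the backward direction, suppose $c := \dot{\Phi}_*(a) - b \notin \mathcal{D}^{\mathcal{J}}[\vec{E}]$; since $\dot{\Phi}$ is an automorphism the push-forward family $\{\dot{\Phi}(\pi_{\mathcal{J}}(P_M)) : M \in \mathcal{L}\}$ induces a non-meager ideal on $\mathbb{N}$, and Lemma \ref{3} (a ZFC statement, hence absolute to $V[G]$) produces some $M \in \mathcal{L}$ with $c \cdot f_M(P_M) \notin \mathcal{D}^{\mathcal{J}}[\vec{E}]$, contradicting $R(M,a,b)$.

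Finally, I would read off the complexity. The relation $R$ is Borel with parameters in $V$, and the outer quantifier ``for all $M$ in the ground model'' is $\Pi^1_2$-expressible in $V[G]$ by Shoenfield/Mansfield-style absoluteness (the set of $V$-reals is $\Sigma^1_2$ once an appropriate parameter is fixed). The condition $q \in \mathbb{P}$ is chosen to force both the hypothesis of the lemma and a coherent ground-model choice of the Borel data $\mathcal{L}$ and $M \mapsto f_M$. The main obstacle is the backward direction of the equivalence: one must precisely identify $f_M(P_M)$ with a projection onto $\Phi(M) \subseteq \mathbb{N}$ modulo $\mathcal{D}^{\mathcal{J}}[\vec{E}]$, and argue that the image family $\{\Phi(M)\}_{M \in \mathcal{L}}$ is still a non-meager ideal in $V[G]$. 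This rests on the properness and $\omega^\omega$-boundedness of $\mathbb{P}$ (and the use of random and groupwise Silver forcings, which add no Cohen reals), ensuring that non-meagerness of ground-model Borel families is preserved into the extension.
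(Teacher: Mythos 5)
Your proposal has a genuine gap at the crucial step where you assert that ``for all $M$ in the ground model'' is $\Pi^1_2$-expressible in $V[G]$ via Shoenfield/Mansfield-style absoluteness. In general the set of ground-model reals in a forcing extension is \emph{not} $\Sigma^1_2$ (nor projective at all, with any parameter): the absoluteness theorems you cite apply to canonically definable inner models like $L$ or $L[x]$, not to an arbitrary ground $V$ of an iteration of Suslin posets. So the formula $\forall M\in V\ R(M,a,b)$ has no a priori descriptive complexity, and the purported $\Pi^1_2$ count collapses.

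The paper's proof sidesteps exactly this by avoiding any quantification over ground-model objects. Instead of fixing a ground-model uniformization $M\mapsto f_M$ and quantifying over $M\in V$, it introduces a $\Delta^1_2$ set $\mathcal{Z}$ of triples $(M,N,f)$ defined by conditions that are checkable in $V[G]$: four $\Pi^1_1$ algebraic/lifting conditions on $f$, plus a fifth condition tying $f$ to the automorphism through the fixed condition $p$, the random real $\dot g_0$, and the continuous reading $h$ of the image $\dot y$ (this fifth condition is $\Delta^1_2$ by Lemma~\ref{222}). Claim~1 of the paper then shows, via the random-real rigidity of Corollary~\ref{2}, that \emph{any} triple in $\mathcal{Z}$ — ground-model or not — actually agrees with $\Phi_*$ on $\mathcal{D}_M[\vec E]$; and the projection $\Gamma$ of $\mathcal{Z}$ is automatically an ideal containing $Triv^0_\Phi$, hence non-meager, which is what Lemma~\ref{3} needs for the backward direction. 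This is the idea your proposal is missing: you must replace ``$M$ comes from $V$'' with a definable (in $V[G]$) certificate that $(M,N,f)$ behaves correctly, strong enough that correctness follows for \emph{every} such triple in the extension. Your worry about preserving non-meagerness of $\Gamma$ is then moot, since $\Gamma$ is computed entirely in $V[G]$ and contains $Triv^0_\Phi$, which is non-meager by hypothesis.
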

\begin{proof}
Let $\dot{g}_{\xi}$ be the canonical $\mathbb{Q}_{\xi}$-name for the generic real added by $\mathbb{Q}_{\xi}$ and let $\dot{y}$ be a $\mathbb{P}$-name such that $\dot{\Phi}(\pi_{\mathcal{J}}(\dot{g}_{0}))=\pi_{\mathcal{J}}(\dot{y})$. Note that $\dot{g}_{0}$ is the canonical $\mathbb{Q}_{0}$-name for the random real. Since $\mathbb{P}$ has continuous reading of names, we can find a condition $p$ with countable support $S$ containing $0$, a compact set $K\subset (F^{\vec{I}})^{S}$ and a continuous function $h: K\rightarrow F^{\vec{I}}$ such that $p\Vdash h(\langle \dot{g}_{\xi}:~\xi\in S\rangle)=\dot{y}$.\\
 Let $\mathcal{Z}$ be the set  of all pairs  $(M,N,f)$ such that
 \begin{enumerate}
 \item $M,N\subseteq\mathbb{N}$.
  \item $f : \mathcal{D}_{M}[\vec{E}]\rightarrow \mathcal{D}_{N}[\vec{E}]$ is a continuous representation of a *-homomorphism from  $\mathcal{C}^{\mathcal{J}}_{M}[\vec{E}]$ into $\mathcal{C}^{\mathcal{J}}_{N}[\vec{E}]$.
  \item $f(P_{M})=^{\mathcal{J}}P_{N}$.
  \item $f(a)\in\mathcal{D}^{\mathcal{J}}[\vec{E}]$ if and only if $a\in \mathcal{D}^{\mathcal{J}}[\vec{E}]\cap \mathcal{D}_{M}[\vec{E}]$.
  \item $p\Vdash f(\check{P}_{M}\dot{g}_{0})=^{\mathcal{J}} \check{P}_{N}\dot{y}$.
 \end{enumerate}

 It is not hard to see that conditions (1),(2),(3), and (4) are $\Pi_{1}^{1}$ and therefore by (co)analytic absoluteness still hold in the generic extension.  Moreover by lemma  \ref{222}  condition (5) is $\Delta_{2}^{1}$. Therefore $\mathcal{Z}$ is $\Delta_{2}^{1}$. The set
  \begin{equation}
 \nonumber \Gamma = \{M~:~ (M,N, f)\in\mathcal{Z} \text{ for some $N$ and $f$}\}
 \end{equation}
 is an ideal on $\mathbb{N}$ and $ Triv_{\Phi}^{0}\subseteq\Gamma$. Since $\Phi$ is locally topologically trivial, $\Gamma$ is non-meager.
 For any  $M\in\Gamma$ let $f_{M}$ be such that $(M,N, f_{M})\in\mathcal{Z}$ for some $N\subseteq\mathbb{N}$. Let $\Phi_{*}$ be an arbitrary representation of the extension of $\Phi$ in the forcing extension.

 \textbf{Claim 1:} For all $M\in \Gamma$ we have $f_{M}(a)=^{\mathcal{J}} \Phi_{*}(a)$ for every $a$ in $\mathcal{D}_{M}(\vec{E})$.

This clearly holds for any finite $M$. Assume $M\in \Gamma$ is infinite.
 By our assumption $p$ forces that

  \begin{equation}
   \nonumber f_{M}(P_{M}\dot{g}_{0})=^{\mathcal{J}} P_{N}\Phi_{*}(\dot{g}_{0}).
  \end{equation}

 Now by corollary \ref{2}, since $P_{M}\dot{g}_{0}$ is the random real with respect to $\prod_{n\in M}G_{n}$,  for every $a$ in $\mathcal{D}_{M}(\vec{E})$

  \begin{equation}
  \nonumber f_{M}(a)=^{\mathcal{J}}P_{N} \Phi_{*}(a).
  \end{equation}

  Let $d=(I - P_{N})\Phi_{*}(a)$. It's enough to show that $d\in \mathcal{D}^{\mathcal{J}}[\vec{E}]$. Let $c=\Phi^{-1}_{*}(d)$ and note that  $(I-P_{M})c\in \mathcal{D}^{\mathcal{J}}[\vec{E}]$ since
    \begin{equation}
   \nonumber \Phi_{*}((I - P_{M})c)=^{\mathcal{J}} \Phi_{*}(I-P_{M})\Phi_{*}(a)(I-P_{N}) =^{\mathcal{J}} 0 .
  \end{equation}

  On the other hand we have
   \begin{equation}
   \nonumber f_{M}(P_{M}c)=^{\mathcal{J}}P_{N}\Phi_{*}(P_{M}c)=^{\mathcal{J}} 0.
  \end{equation}

  By assumption (4) we have  $P_{M}c\in\mathcal{D}^{\mathcal{J}}[\vec{E}]$. This implies $c$ and hence $d$ belong to $\mathcal{D}^{\mathcal{J}}[\vec{E}]$.

As a consequence of  claim (1) if $M\in \Gamma$ then $f_{M}$ witnesses that $M\in Triv_{\Phi}^{0}$ and therefore $\Gamma= Triv_{\Phi}^{0}$.

\textbf{Claim 2: } The following holds in the generic extension:
\begin{equation}
\nonumber\{(a,b) ~:~ \Phi(\pi_{\mathcal{J}}(a))= \pi_{\mathcal{J}}(b)\} = \{(a,b)~:~ (\forall (M, N, f)\in\mathcal{Z}) ~f(P_{M} a)=^{\mathcal{J}} P_{N}b\}.
\end{equation}

Suppose $\Phi(\pi_{\mathcal{J}}(a))= \pi_{\mathcal{J}}(b)$. Again let $\Phi_{*}$ be an arbitrary representation of the extension of $\Phi$ in the forcing extension.  For any $(M, N, f)\in\mathcal{Z}$ by claim (1) we have $f(P_{M}a)=^{\mathcal{J}} \Phi_{*}(P_{M}a)=^{\mathcal{J}} P_{N}b$.

 To see the other direction take $(a,b)$ such that $\Phi(\pi_{\mathcal{J}}(a))\neq \pi_{\mathcal{J}}(b)$. Since $\Phi$ is an automorphism we can find a $\mathcal{D}^{\mathcal{J}}[\vec{E}]$-positive element $c$ such that $\Phi_{*}(c)=^{\mathcal{J}} \Phi_{*}(a)-b$. Since $\Gamma$ is a non-meager ideal by lemma \ref{3} we can find an infinite $M\in\Gamma$ such that $P_{M}c$ is  $\mathcal{D}^{\mathcal{J}}[\vec{E}]$-positive. Now for $(M, N, f_{M})\in \mathcal{Z}$ we have
\begin{eqnarray}
\nonumber f_{M}(P_{M}a)-P_{N}b =^{\mathcal{J}} \Phi_{*}(P_{M}a)- \Phi_{*}(P_{M})b =^{\mathcal{J}}\Phi_{*}(P_{M})(\Phi_{*}(a)- b)=^{\mathcal{J}}\Phi_{*}(P_{M}c)
\end{eqnarray}
and therefore $(a,b)$ does not belong to the left hand side of the equation.

This completes the proof since the right hand side of the equation is $\Pi_{2}^{1}$.
\end{proof}
\section{trivial automorphisms}
\textbf{Proof of theorem \ref{1}.}
 Start with a countable model of \textbf{ZFC}+\textbf{MA} and consider the countable support iteration $\mathbb{P}=\{\mathbb{P}_{\xi},\dot{\mathbb{Q}}_{\eta}~:~\xi\leq \mathfrak{c}^{+}, \eta<\mathfrak{c}^{+}\}$ of forcings of the form $\mathbb{S}_{F^{\vec{I}}}$ and the random forcing such that
 \begin{enumerate}
  \item For every partition $\vec{I}$ of $\mathbb{N}$ into finite intervals  the set $\{\xi : \mathbb{Q}_{\xi} \text{ is } \mathbb{S}_{F^{\vec{I}}} \text{ and } cf(\xi)>\aleph_{0} \}$ is stationary.
 \item The set $\{\xi : \mathbb{Q}_{\xi}$  is the random forcing  and  $ cf(\xi)>\aleph_{0}\}$ is also a stationary set.
  \end{enumerate}
  Let $G$ be a generic filter on $\mathbb{P}$.
  Fix a $\mathbb{P}$-name $\dot{\mathcal{J}}$ for a Borel ideal on $\mathbb{N}$ and a $\mathbb{P}$-name  $\dot{\Phi}$ for an automorphism of $\mathcal{C}^{\mathcal{J}}[\vec{E}]$ in the extension. Since every partition is captured in stationary many steps of uncountable cofinalities, by lemma \ref{4}  $\dot{\Phi}$ is forced to be a $\mathbb{P}$-name for a locally $\Delta_{2}^{1}$ automorphism. Each $\mathbb{P}_{\xi}$ is proper, hence no reals are added at stages of uncountable cofinality. For every $\eta$ with uncountable cofinality $H(\aleph_{1})^{V[G\upharpoonright \eta]}$ is the direct limit of $H(\aleph_{1})^{V[G\upharpoonright \xi]}$ for $\xi<\eta$. By a basic model theory fact there is a club $C$ relative to $\{\xi<\mathfrak{c}^{+}: cf(\xi)\geq \aleph_{1}\}$ such that for every $\xi\in C$ and $\dot{A}$ a $\mathbb{P}$-name for a set of reals we have
  \begin{equation}
  \nonumber (H(\aleph_{1}), int_{G\upharpoonright \xi}(\dot{A}\upharpoonright\xi))^{V[G\upharpoonright\xi]}\preceq (H(\aleph_{1}), int_{G}(\dot{A}))^{V[G]}.
  \end{equation}

    Therefore for every $\xi\in\textbf{C}$, $\dot{\Phi}\upharpoonright\xi$ is a $\mathbb{P}_{\xi}$-name for a locally $\Delta_{2}^{1}$ automorphism and $cf(\xi)>\aleph_{0}$. Fix such a $\xi$ and by (2) assume $\dot{\mathbb{Q}}_{\xi}$ is the name for the random forcing. By \textbf{MA} in the ground model and applying lemma \ref{999} locally we can find  Baire-measurable and hence continuous  representations of $\dot{\Phi}$ in $V$. Therefore  $\dot{\Phi}$ is a $\mathbb{P}_{[\xi, \mathfrak{c}^{+}]}$-name for a locally topologically trivial automorphism which its local triviality is witnessed by ground model continuous maps.
    Therefore lemma \ref{6} implies that $int_{G}(\dot{\Phi})$ is forced to be $\Pi_{2}^{1}$ in $V[G]$.
Since our assumption that there is a measurable cardinal implies that $\Pi_{2}^{1}$ sets have $\Pi_{2}^{1}$-uniformizations and all $\Pi_{2}^{1}$ sets have the property of Baire, the automorphism $int_{G}\dot{\Phi}$  has a Baire-measurable and hence a continuous representation. If $\mathcal{J}$ is a Borel P-ideal by lemma $\ref{5}$ we can get a representation of $int_{G}\dot{\Phi}$   which is a *-homomorphism.
\begin{flushright}
$\square$
\end{flushright}

The following corollary is essentially proved in \cite{FaSh} where the authors show the consistency of having all automorphisms of $P(\mathbb{N})/\mathcal{I}$ trivial for a Borel ideal $\mathcal{I}$  while the Calkin algebra has an outer automorphism.
\begin{corollary}\label{654}
It is relatively consistent with \textbf{ZFC} that all automorphisms of $\mathcal{C}^{\mathcal{J}}[\vec{E}]$ are (trivial) topologically trivial for a Borel (P-)ideal $\mathcal{J}$ and every partition $\vec{E}$ of natural numbers into finite intervals while the Calkin algebra has an outer automorphism.
\end{corollary}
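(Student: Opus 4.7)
The argument is essentially the same as Farah--Shelah \cite{FaSh}, combining Theorem \ref{1} with their preservation of Calkin outer-ness. Although the proof of Theorem \ref{1} is written starting from a model of ZFC+MA, MA is used only to invoke Lemma \ref{999}, and, as the paper notes, that lemma can be bypassed under the measurable-cardinal hypothesis. We may therefore begin in a ground model $V$ that simultaneously satisfies CH and the existence of a measurable cardinal (consistent, e.g., by passing to a $\sigma$-closed collapse above the measurable). Under CH, apply the Phillips--Weaver construction \cite{PhilWeav} in $V$ to fix an outer automorphism $\alpha$ of the Calkin algebra $\mathcal{Q}(H)$.

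Now perform the countable-support iteration $\mathbb{P}=\{\mathbb{P}_{\xi},\dot{\mathbb{Q}}_{\eta}:\xi\le \mathfrak{c}^{+},\eta<\mathfrak{c}^{+}\}$ built in the proof of Theorem \ref{1}, and let $G$ be $\mathbb{P}$-generic over $V$. By Theorem \ref{1}, in $V[G]$ every automorphism of $\mathcal{C}^{\mathcal{J}}[\vec{E}]$ over a Borel ideal $\mathcal{J}$ is topologically trivial, and is trivial when $\mathcal{J}$ is additionally an analytic P-ideal. The only remaining point is to check that $\alpha$ is still outer in $V[G]$.

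For this, import the preservation argument of \cite[\S 5]{FaSh}. The properties of the iteration actually used there are: each iterand $\dot{\mathbb{Q}}_{\eta}$ is Suslin, proper, $\omega^{\omega}$-bounding, of the form $\mathbb{P}_{\mathcal{I}}$ with compact conditions, and the whole iteration has continuous reading of names (Proposition \ref{111}). Suppose towards a contradiction that in $V[G]$ one has $\alpha = \mathrm{Ad}(u)$ modulo compacts for some unitary $u\in \mathcal{B}(H)^{V[G]}$. Using continuous reading of names, localize a $\mathbb{P}$-name $\dot{u}$ for $u$ to a countably-supported continuous reading from ground-model generics, and then run the Farah--Shelah diagonalization against such names: the $\omega^{\omega}$-boundedness (hence preservation of partitions) together with the CH-coded abundance of the Phillips--Weaver construction of $\alpha$ in $V$ yields a ground-model operator witnessing $\alpha \ne \mathrm{Ad}(u)$, a contradiction. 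Combining the Borel case of the trivialization with the preservation then delivers the corollary; the P-ideal case follows the same template using part (3) of Theorem \ref{1}.

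The main obstacle is exactly the transfer of the preservation theorem: \cite{FaSh} carry it out for their creature-forcing iteration, whereas we use the groupwise Silver forcing $\mathbb{S}_{F^{\vec{I}}}$. However, $\mathbb{S}_{F^{\vec{I}}}$ shares every abstract ingredient used in that preservation argument — Suslinness, properness, $\omega^{\omega}$-boundedness, fusion sequences, compact conditions, and the capturing property of Definition \ref{125} — so the argument transfers with only cosmetic changes, which is the sense in which the corollary is ``essentially proved'' in \cite{FaSh}.
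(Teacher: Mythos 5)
Your approach diverges substantially from the paper's and, more importantly, rests on an unsubstantiated preservation claim.

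The paper does not carry a ground-model outer automorphism through the iteration. Its actual argument is purely combinatorial: since $\mathbb{P}$ is a countable support iteration of proper $\omega^{\omega}$-bounding forcings, the extension satisfies $\mathfrak{d}=\aleph_{1}$; the ground model is prepared by adding $\aleph_{3}$ Cohen subsets of $\aleph_{1}$ (preserving \textbf{CH}), so that after the $\aleph_{2}$-cc iteration $\mathbb{P}$ of length $\aleph_{2}$ one has $2^{\aleph_{0}}=\aleph_{2}<\aleph_{3}\le 2^{\aleph_{1}}$; and then one invokes the result recorded in \cite{FaCalkin} (paragraph after Theorem~1.1) that $\mathfrak{d}=\aleph_{1}$ together with $2^{\aleph_{0}}<2^{\aleph_{1}}$ implies the Calkin algebra has an outer automorphism. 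Nothing is constructed in $V$ and nothing needs to be preserved; the outer automorphism is obtained \emph{in} $V[G]$ from cardinal characteristics. This is also what the paper means by the corollary being ``essentially proved'' in \cite{FaSh}.

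Your strategy of fixing a Phillips--Weaver automorphism $\alpha$ in $V$ under \textbf{CH} and proving that $\alpha$ remains outer after forcing with $\mathbb{P}$ is a genuinely distinct route, but the key step --- the ``preservation argument of \cite[\S 5]{FaSh}'' --- is a phantom: nothing in the paper suggests such an argument exists, and the corollary's proof deliberately avoids it. Your sketch of the claimed argument (localize a name $\dot{u}$ by continuous reading and ``run the Farah--Shelah diagonalization against such names'') is not a proof: Phillips--Weaver builds $\alpha$ by a recursion of length $\aleph_{1}$ that diagonalizes against ground-model unitaries only, and there is no mechanism by which that recursion can anticipate all $\mathbb{P}$-names for unitaries in an iteration of length $\mathfrak{c}^{+}$. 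Indeed it is not clear that \emph{any} Phillips--Weaver automorphism remains outer after forcing with $\mathbb{P}$, and this corollary does not need that. So there is a real gap: you have replaced the paper's short combinatorial derivation (via $\mathfrak{d}=\aleph_1$ and a preliminary $\mathrm{Add}(\aleph_1,\aleph_3)$ forcing to arrange $2^{\aleph_0}<2^{\aleph_1}$) with an unproven preservation theorem.

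Two smaller points. First, the paper forces with $\mathbb{P}$ of length $\aleph_{2}$, not $\mathfrak{c}^{+}$ over an arbitrary ground model; the length matters because $\aleph_{2}$-cc is what preserves $2^{\aleph_{1}}\ge\aleph_{3}$. Second, your remark that \textbf{MA} can be dropped in favor of the measurable-cardinal hypothesis is defensible (a measurable cardinal also yields the Baire property for $\Sigma^{1}_{2}$ sets), but it is tangential: the paper's corollary proof already begins with \textbf{CH}, which implies \textbf{MA}, so no such substitution is required.
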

\begin{proof}
Since $\mathbb{P}$ is a countable support iteration  of proper $\omega^{\omega}$-bounding
forcings, it is proper and $\omega^{\omega}$-bounding \cite[$\S$ xVI.2.8(D)]{ShProper}. Hence the dominating number $\mathfrak{d}=\aleph_{1}$. This and the weak continuum hypothesis $2^{\aleph_{0}}<2^{\aleph_{1}}$ imply that the Calkin algebra has an outer automorphism (see \cite{FaCalkin}, the paragraph after the proof of Theorem 1.1). In order to get $2^{\aleph_{0}}<2^{\aleph_{1}}$ start with a model of \textbf{CH} and force with the poset consisting of all countable partial functions $f:\aleph_{3}\times \aleph_{1}\rightarrow \{0,1\}$ ordered by the reverse inclusion to add $\aleph_{3}$ so-called Cohen subsets of $\aleph_{1}$. This will increase $2^{\aleph_{1}}$ to $\aleph_{3}$ while preserving \textbf{CH}. Now force with $\mathbb{P}$  the iteration of length $\aleph_{2}$ as above to make all  automorphisms of $\mathcal{C}^{\mathcal{J}}[\vec{E}]$ trivial. A simple $\Delta$-system argument shows that $\mathbb{P}$ is $\aleph_{2}$-cc and hence it preserves $2^{\aleph_{1}}$.
\end{proof}

\section{concluding remarks}

We don't know whether the large cardinal assumption in theorem \ref{1} is necessary. We have partially removed the need for this assumption in our proof, but as it is pointed out in \cite{FaSh} it is likely that one can completely remove it.

The forcing $\mathbb{P}$ used in this article in fact can be written as a countable support iteration of the random forcing and a single groupwise Silver forcing in the way described in the proof of the theorem \ref{1}. To see this notice that if two partitions of natural numbers $\vec{I}$ and $\vec{J}$ are such that $\vec{J}$ is coarser than $\vec{I}$, then $\mathbb{S}_{F^{\vec{J}}}$ captures $F^{\vec{I}}$. Let $\vec{J}=(J_{n})$ be such that $|J_{n}|=n$. It is enough to show that for every $\vec{I}$ there exists a condition $p$ in $\mathbb{S}_{F^{\vec{J}}}$ such that the partial order $\{q\in\mathbb{S}_{F^{\vec{J}}}: q\leq p\}$ is forcing equivalent to $\mathbb{S}_{F^{\vec{I}}}$. By the remark above we can assume $|I_{n}|=k_{n}$ is increasing. Let $p$ be such that $dom(p)=\mathbb{N}\setminus \{k_{1}, k_{2}, \dots\}$ . Clearly any such $p$ is a condition in $\mathbb{S}_{F^{\vec{J}}}$ since $|J_{n}|=n$. Now it is not hard to check that $\{q\in\mathbb{S}_{F^{\vec{J}}}: q\leq p\}$ and $\mathbb{S}_{F^{\vec{I}}}$ are forcing equivalent.

Note that the results of this paper and \cite{FaSh} can not be immediately modified to work for the category of compact metric groups; for example in \textbf{ZFC} the quotient group $\prod \mathbb{Z}/2\mathbb{Z}/\bigoplus \mathbb{Z}/2\mathbb{Z}$ has $2^{\mathfrak{c}}$ automorphisms and therefore it has nontrivial automorphisms, see \cite[Proposition~9]{FaLift}.

We end with the following question.\\ \\
\textbf{Question 7.1.}
Are there Borel ideals $\mathcal{I}$ and $\mathcal{J}$ such that the assertion that '$\prod_{n} \mathbb{M}_{n}(\mathbb{C})/\bigoplus_{\mathcal{I}} \mathbb{M}_{n}(\mathbb{C})$ is isomorphic to $\prod_{n} \mathbb{M}_{n}(\mathbb{C})/\bigoplus_{\mathcal{J}}\mathbb{M}_{n}(\mathbb{C})$' is independent from \textbf{ZFC}?

\end{document}